\theoremstyle{definition} 
\newtheorem*{definition}{Definition} 
\theoremstyle{plain}  
\newtheorem{theorem}{Theorem}[section]
\newtheorem{proposition}[theorem]{Proposition}
\newtheorem{corollary}[theorem]{Corollary}
\newtheorem{lemma}[theorem]{Lemma}
\DeclareMathOperator{\id}{id}
\DeclareMathOperator{\esssup}{esssup}
\DeclareMathOperator{\QC}{QC}
\DeclareMathOperator{\AC}{AC}
\DeclareMathOperator{\QS}{QS}
\DeclareMathOperator{\Sym}{Sym}
\DeclareMathOperator{\Mob}{\mbox{\rm{M\"ob}}}
\DeclareMathOperator{\Diff}{Diff}
\DeclareMathOperator{\Bel}{Bel}
\DeclareMathOperator{\Ael}{Ael}
\DeclareMathOperator{\D}{\mathbb D}
\DeclareMathOperator{\R}{\mathbb R}
\DeclareMathOperator{\N}{\mathbb N}
\DeclareMathOperator{\S1}{\mathbb S}
\DeclareMathOperator{\Chat}{\widehat {\mathbb C}}
\begin{document}

\title[The quotient Bers embedding]{Injectivity of the quotient Bers embedding of Teich\-m\"ul\-ler spaces}

\author[K. Matsuzaki]{Katsuhiko Matsuzaki}
\address{Department of Mathematics, School of Education, Waseda University,\endgraf
Shinjuku, Tokyo 169-8050, Japan}
\email{matsuzak@waseda.jp}

\subjclass[2010]{Primary 30F60, Secondary 37E30}
\keywords{asymptotically conformal, Schwarzian derivative, Bers embedding, quasisymmetric homeomorphism, circle
diffeomorphism, integrable Teich\-m\"ul\-ler space, asymptotic Teich\-m\"ul\-ler space}
\thanks{This work was supported by JSPS KAKENHI 25287021.}

\begin{abstract}
The Bers embedding of the Teich\-m\"ul\-ler space is a homeomorphism into the Banach space of 
certain holomorphic automorphic forms. 
For a subspace of the universal Teich\-m\"ul\-ler space and its corresponding Banach subspace,
we consider whether the Bers embedding can project down between their quotient spaces.
If this is the case, it is called the quotient Bers embedding. 
Injectivity of the quotient Bers embedding is the main problem 
in this paper. Alternatively, we can describe this situation as
the universal Teich\-m\"ul\-ler space having an affine foliated structure induced by this subspace.
We give several examples of subspaces for which the injectivity holds true,
including the Teich\-m\"ul\-ler space of circle diffeomorphisms with H\"older continuous derivative. 
As an application, the regularity of conjugation between representations 
of a Fuchsian group into the group of circle diffeomorphisms is investigated.
\end{abstract}

\maketitle

\section{Introduction}\label{1}
The universal Teich\-m\"ul\-ler space $T$ is the ambient space of any other Teich\-m\"ul\-ler spaces.
An affine foliated structure of $T$ is induced by its certain subspace through the Bers embedding 
$\beta$ of $T$ into
the Banach space $B(\D^*)$ of hyperbolically bounded holomorphic quadratic automorphic forms on the disk $\D^*=\Chat-\overline{\D}$
in the Riemann sphere centered at the infinity. 
This was first investigated by Gardiner and Sullivan \cite{GS}
for the little Teich\-m\"ul\-ler subspace $T_0$, which consists of the asymptotically conformal elements of $T$.
This subspace is embedded by $\beta$ into the Banach subspace $B_0(\D^*)$ of $B(\D^*)$ consisting of
all elements vanishing at the boundary.
They proved that
the foliated structure of $T$ given by the right translations of $T_0$ in $T$
corresponds to the affine foliation of $B(\D^*)$ by the subspace $B_0(\D^*)$ 
under the Bers embedding $\beta$. 
In other words,
the Bers embedding is compatible with
the coset decompositions $T_0 \backslash T$ and $B_0(\D^*) \backslash B(\D^*)$.

The asymptotic Teich\-m\"ul\-ler space $AT$ was introduced in \cite{GS} as the quotient space $T_0 \backslash T$.
The compatibility of the Bers embedding $\beta$ with the coset decompositions as mentioned above
yields a well-defined quotient map 
$$
\widehat \beta:T_0 \backslash T \to B_0(\D^*) \backslash B(\D^*),
$$
by which the complex structure modeled on the quotient Banach space $B_0(\D^*) \backslash B(\D^*)$
was provided for $AT$. Later, the argument was simplified by showing that 
$\widehat \beta$ is also injective. This is due to Kahn (see
Gardiner and Lakic \cite[Section 16.8]{GL}).
Earle, Markovic and Saric \cite[Theorem 4]{EMS} generalized the injectivity of the quotient Bers embedding
for the Teich\-m\"ul\-ler space $T(\D/\Gamma)$ of a Riemann surface $\D/\Gamma$ for a Fuchsian group $\Gamma$ with respect to 
the corresponding little Teich\-m\"ul\-ler subspace $T_0(\D/\Gamma)$.

In this paper, we show other examples of affine foliated structures of the universal Teich\-m\"ul\-ler space $T$
ensuring the well-definedness and the injectivity of the quotient Bers embeddings. The subspaces of $T$ we handle here are the $p$-integrable
Teich\-m\"ul\-ler space $T^p$ (see Cui \cite{Cui}, Guo \cite{Guo}, Shen \cite{Sh}, Tang \cite{Tang}
and Yanagishita \cite{Yan}) and 
the Teich\-m\"ul\-ler space $T_0^{>0}$ of
circle diffeomorphisms of H\"older continuous derivative of an arbitrary exponent (see \cite{Mat0}, \cite{Mat3}).
The definitions of these Teich\-m\"ul\-ler spaces and the precise statements of our main theorems are 
given in Sections \ref{4} and \ref{5}, respectively. 
For the $2$-integrable
Teich\-m\"ul\-ler space $T^2$, Takhtajan and Teo \cite{TT} have shown
the well-definedness of the quotient Bers embedding of $T^2 \backslash T$, 
but the injectivity seems a new result.

The proofs of the injectivity mentioned above are based on a common argument. In Section \ref{3}, we summarize it as
a general principle. The injectivity of the quotient Bers embedding has been also proved in a different setting
and in a different method. See a recent work of Wei and Zinsmeister \cite{WZ}.
 
Affine foliated structures can be also defined on other Teich\-m\"ul\-ler spaces than $T$.
In Sections \ref{6}, we consider such situations, and in particular,
we prove the affine foliated structure of $T_0^{>0}$ induced by the Teich\-m\"ul\-ler space $T_0^\alpha$ of
circle diffeomorphisms of $\alpha$-H\"older continuous derivative for $\alpha \in (0,1)$. 
We have obtained in \cite{Mat2} a complex structure on 
$T_0^\alpha$ modeled on a certain Banach space via the Bers embedding. Our result in particular shows that
$T_0^{>0}$ admits such a Banach manifold structure with the decomposition into mutually disjoint but equivalent components,
and each component corresponds injectively to an affine subspace of the Bers embedding.

As an application of one of our main theorems,
we can represent the deformation space $DT(\Gamma)$ of
a Fuchsian group $\Gamma \subset \Mob(\S1) \cong {\rm PSL}(2,\R)$ 
in the group $\Diff_+^{>1}(\mathbb S)$ of all circle diffeomorphisms with
H\"older continuous derivatives of any exponent
as a subspace of the Teich\-m\"ul\-ler space $AT(\Gamma)$ of $\Gamma$-invariant symmetric structures on $\S1$.
Here, $AT(\Gamma)$ is the closed subspace of $AT$ consisting of all elements of $AT$ fixed by 
the action of every $\gamma \in \Gamma$.
This space was studied in \cite{Mat6}. To show the injectivity of $DT(\Gamma) \to AT(\Gamma)$,
we also need a rigidity theorem for the representation of $\Gamma$ in $\Diff_+^{1+\alpha}(\mathbb S)$
given in \cite{Mat7}. Applying this theorem,
we finally prove in Section \ref{7} that if two representations of $\Gamma$ in $\Diff_+^{r}(\mathbb S)$ for $r>1$ are conjugate by
a symmetric homeomorphism $f$ representing an element of $T_0$, then $f$ actually belongs to $\Diff_+^{r}(\mathbb S)$.

\section{Preliminaries and background results}\label{2}
An orientation-preserving homeomorphism $w$ of a domain in the complex plane $\mathbb C$
is said to be {\it quasiconformal} if partial derivatives $\partial w$ and $\bar \partial w$ in
the distribution sense exist and
if the {\it complex dilatation}
$\mu_w(z)=\bar \partial w(z)/\partial w(z)$ satisfies $\Vert \mu_w \Vert_\infty <1$.
Let
$$
\Bel(\D)=\{\mu \in L^\infty(\mathbb \D) \mid \Vert \mu \Vert_\infty <1\}
$$ 
be the space of such measurable functions on the unit disk $\D$, which are called {\it Beltrami coefficients}.
We denote the group of all quasiconformal self-homeomorphisms of $\D$ by $\QC(\D)$.
By the measurable Riemann mapping theorem (see \cite{Ah0}), 
for every $\mu \in \Bel(\D)$, there is $w \in \QC(\D)$ satisfying $\mu_w=\mu$
uniquely up to the post-composition of elements of $\Mob(\D) \cong {\rm PSL}(2,\mathbb R)$, the group of 
all M\"obius transformations of $\D$. This gives the identification
$$
\Mob(\D)\backslash \QC(\D) \cong \Bel(\mathbb \D). 
$$

Every $w \in \QC(\D)$ extends continuously to a {\it quasisymmetric} self-homeomorphism of $\S1=\partial \D$.
Let $\QS$ be the group of all quasisymmetric self-homeomorphisms of $\S1$. 
We denote the boundary extension map by
$$
q:\QC(\D) \to \QS,
$$
which is a surjective homomorphism.
The {\it universal Teich\-m\"ul\-ler space} is defined by
$$
T=\Mob(\S1) \backslash \QS, 
$$
where $\Mob(\S1)=q(\Mob(\D))$.
Then, $q$ induces the Teich\-m\"ul\-ler projection $\pi:\Bel(\D) \to T$.
The quotient topology of $T$ is induced from the norm on $\Bel(\D)$ by $\pi$. In fact, the Teich\-m\"ul\-ler distance can be
defined by using the hyperbolic distance on $\Bel(\D)$.

For every $\mu \in \Bel(\D)$, we
extend it to a Beltrami coefficient $\widehat \mu$ on the Riemann sphere $\widehat{\mathbb C}$
by setting $\widehat \mu(z) \equiv 0$ for $z \in \mathbb D^*=\widehat{\mathbb C}-\overline{\mathbb D}$.
We denote a quasiconformal homeomorphism of $\Chat$ with the complex dilatation $\widehat \mu$ by
$f_\mu$.
The measurable Riemann mapping theorem guarantees the existence of such $f_\mu$
and the uniqueness of $f_\mu$ up to the post-composition of M\"obius transformations of $\widehat{\mathbb C}$.
We take the Schwarzian derivative 
$S_{f_\mu}:\mathbb D^* \to \widehat{\mathbb C}$ of 
the conformal homeomorphism $f_\mu|_{\mathbb D^*}$, which 
parametrizes the complex projective structures on $\D^*$.
By the Nehari-Kraus theorem, 
$S_{f_\mu}$ belongs to the complex Banach space of hyperbolically bounded holomorphic quadratic automorphic forms
$$
B(\D^*)=\{\varphi(z)dz^2 \mid \Vert \varphi \Vert_\infty :=\sup_{z \in \D^*} \rho^{-2}_{\D^*}(z)|\varphi(z)|<\infty\},
$$
where $\rho_{\D^*}(z)=2/(|z|^2-1)$ is the hyperbolic density on $\D^*$.
By this correspondence $\mu \mapsto S_{f_\mu}$, a map
$$
\Phi:\Bel(\D) \to B(\D^*)
$$
is defined to be a holomorphic split submersion, which is called the {\it Bers projection} (onto the image).

For the Teich\-m\"ul\-ler projection $\pi:\Bel(\D) \to T$ and
the Bers projection $\Phi:\Bel(\D) \to B(\D^*)$, we can show that
$\Phi \circ \pi^{-1}$ is well-defined and injective, which defines 
a map $\beta:T \to B(\D^*)$ called the {\it Bers embedding}.
In fact, $\beta$ is a homeomorphism onto the image $\beta(T)=\Phi(\Bel(\D))$ and $\beta(T)$ is a bounded domain in $B(\D^*)$.
This provides a complex Banach manifold structure for $T$.

There is a global continuous section for the Teich\-m\"ul\-ler projection $\pi:\Bel(\D) \to T$.
This is defined by a canonical quasiconformal extension $e:\QS \to \QC(\D)$
for each quasisymmetric self-homeomorphism $g$ of $\S1$. Douady and Earle \cite{DE} introduced
the {\it barycentric extension} $e_{\rm DE}:\QS \to \QC(\D)$
having the {\it conformal naturality} 
$$
e_{\rm DE}(\phi_1 \circ g \circ \phi_2)=e_{\rm DE}(\phi_1) \circ e_{\rm DE}(g) \circ e_{\rm DE}(\phi_2)
$$
for any $\phi_1, \phi_2 \in \Mob(\S1)$ and any $g \in \QS$, where $e_{\rm DE}(\phi_1)$ and $e_{\rm DE}(\phi_2)$ are in
$\Mob(\D)$.
Taking the quotient by $\Mob(\S1)=\Mob(\D)$ in both sides, 
we have a continuous map $s:T \to \Bel(\D)$ such that
$\pi \circ s=\id_T$. 
The existence of a global continuous section implies that $T$ is contractible.
Let $\sigma: \Bel(\D) \to \Bel(\D)$ be defined by the correspondence of
$\mu$ to $s(\pi(\mu))$ for the section $s$. 
The image $\sigma(\Bel(\D))$ is the set of all Beltrami coefficients obtained by the barycentric extension.

For any $\nu \in \Bel(\D)$, let $f^\nu \in \QC(\D)$ be a normalized element having 
the complex dilatation $\nu$, where
the {\it normalization} is given by fixing three boundary points $1$, $i$ and $-1$ on $\S1$.
The subgroup of $\QC(\D)$ consisting of all normalized elements is denoted by $\QC_*(\D)$.
Applying this normalization,
we can define a group structure on $\Bel(\D)$ as follows.
For any $\nu_1, \nu_2 \in \Bel(\D)$, let
$\nu_1 \ast \nu_2$ be the complex dilatation of the composition $f^{\nu_1} \circ f^{\nu_2}$.
Then, $\Bel(\D)$ is a group with this operation $\ast$. In other words,
by the identification of $\Bel(\D)$ with $\QC_*(\D)$, we regard $\Bel(\D)$ as a subgroup of $\QC(\D)$.
We denote the inverse element of $\nu \in \Bel(\D)$ by $\nu^{-1}$, which is the complex dilatation of $(f^\nu)^{-1}$.
The chain rule of partial differentials yields a formula
$$
\nu_1 \ast \nu_2^{-1}(\zeta)=\frac{\nu_1(z)-\nu_2(z)}{1-\overline{\nu_2(z)}\nu_1(z)}\cdot\frac{\partial f^{\nu_2}(z)}{\overline{\partial f^{\nu_2}(z)}}
\qquad(\zeta=f^{\nu_2}(z)).
$$
Each $\nu\in \Bel(\D)$ induces the right translation $r_\nu:\Bel(\D) \to \Bel(\D)$ by
$\mu \mapsto \mu \ast \nu^{-1}$. 
By the above formula, we see that $r_\nu$ and $(r_\nu)^{-1}=r_{\nu^{-1}}$ are continuous, and 
hence $r_\nu$ is a homeomorphism 
of $\Bel(\D)$. In fact, this is a biholomorphic automorphism of $\Bel(\D)$.

For the base point $[\id]$ of $T=\Mob(\S1) \backslash \QS$, the inverse image of the Teich\-m\"ul\-ler projection
$$
\pi^{-1}([\id])=\{\nu \in \Bel(\D) \mid q(f^{\nu})=\id\}
$$
is a normal subgroup of $\Bel(\D)$
since $q:\QC(\D) \to \QS$ is a homomorphism.
Having $T=\Bel(\D)/\pi^{-1}([\id])$, we see that $T$ has a group structure with the operation $\ast$ 
defined by $\pi(\nu_1) \ast \pi(\nu_2)=\pi(\nu_1 \ast \nu_2)$. 
The projection of the right translation $r_\nu:\Bel(\D) \to \Bel(\D)$ under $\pi$ yields a well-defined map
$R_{\pi(\nu)}:T \to T$ by 
$$
\pi(\mu) \mapsto \pi(\mu \ast \nu^{-1})=\pi(\mu)\ast \pi(\nu)^{-1}.
$$
In this way, we have the base point change $R_\tau:T \to T$ sending $\tau$ to
$[\id]$ for every $\tau \in T$.
Alternatively, 
$R_\tau:T \to T$ is defined by $[g] \mapsto [g \circ f^{-1}]$ for $\tau =[f] \in T$. 
We see that the base point change $R_\tau$ is also a biholomorphic automorphism of $T$.

Every element $\gamma \in \Mob(\S1)$ acts on $B(\D^*)$ linear isometrically through the Bers embedding $\beta$.
This means that, for any point $[f] \in T$ with $\beta([f])=\varphi \in \beta(T)$, the Bers embedding $\beta(\gamma^*[f])$ of 
$\gamma^*[f]:=[f \circ \gamma]$
is represented by
$$
(\gamma^*\varphi)(z)=\varphi(\gamma(z))\gamma'(z)^2,
$$
where we regard $\gamma$ as the element of $\Mob(\D^*)$,
the group of 
all M\"obius transformations of $\D^*$. Namely, 
$\gamma^*\varphi$ is the pull-back of $\varphi$ by $\gamma$ as a quadratic automorphic form.
Clearly, this action extends to $B(\D^*)$ and satisfies $\Vert \gamma^*\varphi \Vert_\infty=\Vert \varphi \Vert_\infty$.

A quasiconformal self-homeomorphism $w \in \QC(\D)$ is called {\it asymptotically conformal} if 
its complex dilatation vanishes at the boundary, that is, $\mu_w(z) \to 0$ as $|z| \to 1$.
The subspace of $\Bel(\D)$ consisting of all Beltrami coefficients vanishing at the boundary is denoted by $\Bel_0(\D)$ and
the subgroup of $\QC(\D)$ consisting of all asymptotically conformal self-homeomorphisms of $\D$ is denoted by
$\AC(\D)$. Every $w \in \AC(\D)$ extends continuously to a {\it symmetric} self-homeomorphism of $\S1$.
The group of all symmetric self-homeomorphisms of $\S1$ is denoted by
$\Sym$. Then, the restriction of the boundary extension to $\AC(\D)$ gives a surjective homomorphism
$q:\AC(\D) \to \Sym$. 

Gardiner and Sullivan \cite{GS} studied the {\it asymptotic Teich\-m\"ul\-ler space} defined by
$$
AT=\Sym \backslash \QS,
$$
and the {\it little universal Teich\-m\"ul\-ler space} defined by
$$
T_0=\Mob(\S1) \backslash \Sym=\pi(\Bel_0(\D)). 
$$
They introduced $\Sym$ as a particular topological subgroup of $\QS$.
The characterizations of symmetric self-homeomorphisms by 
the Bers embedding of $T_0$ 
was also given. The Banach subspace of $B(\D^*)$ consisting of the elements of vanishing at the boundary is denoted by
$$
B_0(\D^*)=\{\varphi \in B(\D^*) \mid \lim_{|z| \to 1} \rho^{-2}_{\D^*}(z)|\varphi(z)|=0\}.
$$

\begin{proposition}\label{gs}
For a quasisymmetric homeomorphism $g \in \QS$, the following conditions are equivalent:
$(1)$ $g \in \Sym$; $(2)$ $s([g]) \in \Bel_0(\D)$; $(3)$ $\beta([g]) \in B_0(\D^*)$.
\end{proposition}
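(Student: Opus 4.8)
The plan is to establish the cycle $(2)\Rightarrow(3)\Rightarrow(1)\Rightarrow(2)$; note that $(2)\Rightarrow(1)$ is in any case immediate, since $s([g])\in\Bel_0(\D)$ gives $[g]=\pi(s([g]))\in\pi(\Bel_0(\D))=T_0$, and then $g\in\Sym$ because $\Mob(\S1)\subset\Sym$ and $\Sym$ is a group. For $(2)\Rightarrow(3)$ I would first record the general fact $\Phi(\Bel_0(\D))\subset B_0(\D^*)$ and then apply it to $\mu=s([g])$, using $\beta([g])=\Phi(s([g]))$ (valid since $\pi\circ s=\id_T$). To prove the general fact, approximate $\mu\in\Bel_0(\D)$ in the $L^\infty$-norm by the truncations $\mu_k=\mu\cdot\chi_{\{|z|\le 1-1/k\}}\in\Bel(\D)$; here $\|\mu-\mu_k\|_\infty=\esssup_{1-1/k<|z|<1}|\mu(z)|\to 0$ is precisely the boundary vanishing of $\mu$. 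Each $\mu_k$ has compact support in $\D$, so $f_{\mu_k}$ is conformal on a neighbourhood of $\overline{\D^*}$, hence $S_{f_{\mu_k}}$ is holomorphic and bounded near $\S1$, and since $\rho_{\D^*}^{-2}(z)=(|z|^2-1)^2/4\to 0$ as $|z|\to 1$ we get $S_{f_{\mu_k}}\in B_0(\D^*)$. As $\Phi$ is continuous and $B_0(\D^*)$ is norm-closed in $B(\D^*)$, the limit $\Phi(\mu)=\lim_k S_{f_{\mu_k}}$ lies in $B_0(\D^*)$.

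For $(1)\Rightarrow(2)$ I would show that the barycentric section sends symmetric classes into $\Bel_0(\D)$: $s([g])=\mu_{e_{\rm DE}(g)}\in\Bel_0(\D)$ whenever $g\in\Sym$. Assume not; then there are $z_n\to\zeta\in\S1$ with $|\mu_{e_{\rm DE}(g)}(z_n)|\ge\delta>0$. Put $A_n(u)=(u+z_n)/(1+\bar z_n u)\in\Mob(\D)$, so $A_n(0)=z_n$ and $A_n\to\zeta$ locally uniformly on $\D$, and choose $B_n\in\Mob(\S1)$ so that $g_n:=B_n\circ g\circ A_n$ fixes $1,i,-1$. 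Writing $g=q(w)$ with $w\in\AC(\D)$, the composition formula gives $|\mu_{B_n\circ w\circ A_n}(u)|=|\mu_w(A_n(u))|$, which tends to $0$ for a.e.\ $u\in\D$ because $A_n$ pushes compacta to $\S1$, where the dilatation of $w$ vanishes; by continuous dependence of normalized solutions on the dilatation, $B_n\circ w\circ A_n\to\id$ locally uniformly, so $g_n=q(B_n\circ w\circ A_n)\to\id$ uniformly on $\S1$. By conformal naturality, $e_{\rm DE}(g_n)=B_n\circ e_{\rm DE}(g)\circ A_n$, and by the continuity of the Douady--Earle extension \cite{DE} this converges to $e_{\rm DE}(\id)=\id$ in $C^1_{\mathrm{loc}}(\D)$, so $\mu_{e_{\rm DE}(g_n)}\to 0$ locally uniformly; but pre- and post-composition with M\"obius maps give $|\mu_{e_{\rm DE}(g_n)}(0)|=|\mu_{e_{\rm DE}(g)}(z_n)|\ge\delta$, a contradiction.

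Finally, $(3)\Rightarrow(1)$ is the characterization of symmetric homeomorphisms via the Bers embedding of $T_0$ due to Gardiner and Sullivan \cite{GS} (see also \cite{GL}), which I would invoke rather than reprove. In outline: if $\varphi=\beta([g])\in B_0(\D^*)$ and $F$ denotes the normalized conformal map on $\D^*$ with $S_F=\varphi$, then $\rho_{\D^*}^{-2}(z)|\varphi(z)|$ is small on an annular neighbourhood of $\S1$, so the Ahlfors--Weill recipe can be used near $\S1$ to produce, after patching with a fixed quasiconformal extension over the compact part of $\D$, a quasiconformal extension of $F$ to $\Chat$ whose complex dilatation $\nu$ on $\D$ satisfies $|\nu(z)|=2\rho_{\D^*}^{-2}(1/\bar z)|\varphi(1/\bar z)|$ near $\S1$ and $\Phi(\nu)=\varphi$; then $\nu\in\Bel_0(\D)$, and injectivity of $\beta$ gives $[g]=\pi(\nu)\in\pi(\Bel_0(\D))=T_0$, so $g\in\Sym$. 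I expect the main obstacle to be precisely this step, and within it the verification that the patched global extension still has $\varphi$ as its Schwarzian derivative on $\D^*$; the rest is routine approximation and standard properties of the Douady--Earle extension.
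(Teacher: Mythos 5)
The paper does not prove this proposition: it is quoted as background from Gardiner--Sullivan \cite{GS}, with \cite{EMS} cited for the barycentric-extension part, so there is no in-paper argument to match yours against. Judged on its own, your cycle is sound where it is self-contained. The implication $(2)\Rightarrow(3)$ via truncation of $\mu$ to $\mu_k=\mu\chi_{\{|z|\le 1-1/k\}}$, conformality of $f_{\mu_k}$ across $\S1$, and norm-closedness of $B_0(\D^*)$ is correct and elementary; and $(1)\Rightarrow(2)$ is exactly the conformal-naturality/compactness argument of Douady--Earle and Earle--Markovic--Saric (rescale by $A_n$, normalize, use that $A_n^*\mu_w\to 0$ a.e.\ with a uniform bound $\Vert\mu_w\Vert_\infty\le k<1$ to get $g_n\to\id$ uniformly, then contradict $|\mu_{e_{\rm DE}(g)}(z_n)|\ge\delta$ at the origin), which is precisely how $\sigma(\Bel(\D))\cap\pi^{-1}(T_0)\subset\Bel_0(\D)$ is established in \cite{EMS}.

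The one place where your outline, if it were to be carried out rather than cited, would not work as written is $(3)\Rightarrow(1)$. The Ahlfors--Weill recipe is only available when $\Vert\varphi\Vert_\infty$ is below a fixed threshold, so for a general $\varphi\in\beta(T)\cap B_0(\D^*)$ you cannot apply it ``near $\S1$'' to $\varphi$ itself and then patch over a compact set: naive patching changes the map on $\D$ only, but you must still certify that the resulting global quasiconformal map restricts on $\D^*$ to a conformal map with Schwarzian exactly $\varphi$, and a dilatation glued by hand will in general produce some other point of $\beta(T)$. The correct repair is the composition structure the paper sets up in Section \ref{3} at the basepoint $\nu=0$: write $\widehat f=\widehat f_0\circ\widehat f_1$ where $\widehat f_1$ accounts for a form $\varphi_1$ with $\Vert\varphi_1\Vert_\infty<\delta$ (so Lemma \ref{reflection} applies and controls $|\widehat\mu_1|$ pointwise by $\rho_{\D^*}^{-2}(z^*)|\varphi_1(z^*)|$), while $\widehat f_0$ has compactly supported dilatation; the product dilatation then vanishes at the boundary and $\Phi$ of it is $\varphi$ by construction, not by patching. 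Since you explicitly defer this direction to \cite{GS} rather than claim the sketch as a proof, your proposal stands, but you should be aware that the ``main obstacle'' you flag is not merely the verification of the Schwarzian after patching --- it is that the construction has to be organized as a composition from the start, which is what Proposition \ref{summary} formalizes.
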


We refer to Earle, Markovic and Saric \cite{EMS} for the barycentric extension. In particular, we see that 
$$
\beta(T_0)=\Phi(\Bel_0(\D))=\beta(T) \cap B_0(\D^*).
$$ 

Next, we consider how $T_0$ is mapped into $T$ by the base point change $R_\tau:T \to T$ for $\tau \in T$.
Since $R_\tau$ is a biholomorphic automorphism of $T$, $T_0$ is mapped biholomorphically onto
the image $R_\tau(T_0)$. 
We recall that $T_0$ is a subgroup of $(T,\ast)$ and $R_\tau$ is defined by
$R_\tau(\tau')=\tau' \ast \tau^{-1}$ for every $\tau' \in T$.
Then, the coset decomposition of $T$ by the subgroup $T_0$ is exactly 
the disjoint union 
$$
T=\bigsqcup_{[\tau] \in T_0 \backslash T} R_\tau(T_0)
$$
of mutually biholomorphically equivalent subspaces.

Moreover, we find that the image of the decomposition $T=\bigsqcup_{[\tau] \in T_0 \backslash T} R_\tau(T_0)$ under the 
Bers embedding $\beta:T \to B(\D^*)$ corresponds to the foliation of $\beta(T)$ by
the family of Banach affine subspaces $\{B_0(\D^*)+\psi\}_{[\psi] \in B_0(\D^*) \backslash B(\D^*)}$.
This compatibility can be formulated as the following theorem.

\begin{theorem}\label{B_0-B}
For each $\nu \in \Bel(\D)$, let $\psi=\Phi(\nu) \in B(\D^*)$. Then,
$$
\Phi \circ r_\nu^{-1}(\Bel_0(\D))=\beta(T) \cap \{B_0(\D^*)+\psi\}.
$$
Hence, $\beta \circ R_{\tau}^{-1}(T_0)=\beta(T) \cap \{B_0(\D^*)+\beta(\tau)\}$ for every
$\tau \in T$.
\end{theorem}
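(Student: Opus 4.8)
The plan is to establish the set equality $\Phi\circ r_\nu^{-1}(\Bel_0(\D))=\beta(T)\cap\{B_0(\D^*)+\psi\}$ by two inclusions via the chain rule for the Schwarzian derivative, and then to read off the ``Hence'' clause as a formal consequence. For the latter, recall that $\beta\circ\pi=\Phi$, that $T_0=\pi(\Bel_0(\D))$, and that the right translation $r_\nu^{-1}\colon\mu\mapsto\mu\ast\nu$ descends under $\pi$ to the map $\pi(\mu)\mapsto\pi(\mu)\ast\pi(\nu)$, which is $R_\tau^{-1}$ for $\tau=\pi(\nu)$; since also $\psi=\Phi(\nu)=\beta(\tau)$ and $\pi$ is onto, applying $\beta$ to the set equality yields $\beta\circ R_\tau^{-1}(T_0)=\beta(T)\cap\{B_0(\D^*)+\beta(\tau)\}$ for every $\tau\in T$.

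For the inclusion ``$\subseteq$'', fix $\mu\in\Bel_0(\D)$; then $\Phi(\mu\ast\nu)=\beta(\pi(\mu\ast\nu))$ lies in $\beta(T)$ automatically, so it remains to check $\Phi(\mu\ast\nu)-\psi\in B_0(\D^*)$. Put $H=f_{\mu\ast\nu}\circ f_\nu^{-1}$. Since $f_{\mu\ast\nu}$ and $f_\nu$ are conformal on $\D^*$, the map $H$ is conformal on the quasidisk $D^+:=f_\nu(\D^*)$, and $f_{\mu\ast\nu}|_{\D^*}=(H|_{D^+})\circ(f_\nu|_{\D^*})$. By the chain rule $S_{\varphi\circ g}=(S_\varphi\circ g)(g')^2+S_g$ we obtain $\Phi(\mu\ast\nu)-\Phi(\nu)=(S_H\circ f_\nu)(f_\nu')^2$ on $\D^*$, and because $f_\nu|_{\D^*}$ is a conformal isometry onto $D^+$, the pointwise hyperbolic size $\rho_{\D^*}^{-2}(z)\,|\cdots|$ of this quadratic form at $z$ equals $\rho_{D^+}^{-2}(f_\nu(z))\,|S_H(f_\nu(z))|$, i.e. the hyperbolic size of $S_H$ measured inside $D^+$. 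On the other hand, comparing complex dilatations on $\D$ shows $H|_{D^-}=P\circ f^\mu\circ Q^{-1}$ on the complementary quasidisk $D^-:=f_\nu(\D)$, with $P,Q$ conformal and $Q\colon\D\to D^-$; hence the complex dilatation of $H$ at $\zeta\in D^-$ has modulus $|\mu(Q^{-1}(\zeta))|$, which tends to $0$ as $\zeta$ approaches $C:=\partial D^+$, because $\mu\in\Bel_0(\D)$ and $Q^{-1}$ carries $C$ to $\S1$. The remaining point — that the $D^+$-hyperbolic size of $S_H$ then also vanishes at $C$ — is the quasidisk analogue of the implication $(1)\Rightarrow(3)$ in Proposition~\ref{gs}, provable by a local estimate near the quasicircle $C$ (Ahlfors--Weill reflection; cf.\ \cite{EMS}).

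For the inclusion ``$\supseteq$'', take $\varphi\in\beta(T)\cap\{B_0(\D^*)+\psi\}$ and write $\varphi=\Phi(\lambda)$. The natural candidate is $\mu:=\lambda\ast\nu^{-1}$: then $r_\nu^{-1}(\mu)=\lambda$ and $\Phi(r_\nu^{-1}(\mu))=\varphi$ hold automatically, so it suffices to prove $\lambda\ast\nu^{-1}\in\Bel_0(\D)$. Repeating the computation of the previous paragraph with $K:=f_\lambda\circ f_\nu^{-1}$ in place of $H$, the hypothesis $\varphi-\psi=\Phi(\lambda)-\Phi(\nu)\in B_0(\D^*)$ says precisely that the $D^+$-hyperbolic size of $S_K$ vanishes at $C$, while $K|_{D^-}$ equals the normalized map $f^{\lambda\ast\nu^{-1}}$ up to pre- and post-composition with conformal maps, so the complex dilatation of $K$ on $D^-$ is $|\lambda\ast\nu^{-1}|$ transported to $D^-$ by a conformal map. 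The implication one needs now is the converse: the $D^+$-hyperbolic Schwarzian vanishing at $C$ forces the complex dilatation of $K$ to vanish at $C$, whence $\lambda\ast\nu^{-1}\in\Bel_0(\D)$. This injectivity-type implication is the one genuinely substantial step — it is the quasidisk version of Kahn's theorem on the injectivity of $\widehat\beta\colon AT\to B_0(\D^*)\backslash B(\D^*)$ (see \cite[Section 16.8]{GL}), and its mechanism is exactly what is abstracted in Section~\ref{3}. As a shortcut one may instead quote directly, for the pair $T_0\subset T$, the well-definedness \cite{GS} and injectivity \cite{GL} of the quotient Bers embedding and combine them with the identifications of the first paragraph, which already deliver the theorem.
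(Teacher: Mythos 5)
The paper does not give a self-contained proof of Theorem \ref{B_0-B}: it attributes the inclusion $\subset$ to Gardiner--Sullivan \cite{GS} and the converse inclusion to Kahn \cite[Section 16.8]{GL}, with \cite{EMS} as an alternative source. Your closing ``shortcut'' --- quoting well-definedness \cite{GS} and injectivity \cite{GL} of $\widehat\beta$ for the pair $T_0\subset T$ and translating via $\beta\circ\pi=\Phi$, $T_0=\pi(\Bel_0(\D))$ and $R_\tau^{-1}\circ\pi=\pi\circ r_\nu^{-1}$ --- is therefore exactly the paper's route, and your derivation of the ``Hence'' clause and your reduction of $\subset$ to the vanishing of the $D^+$-hyperbolic size of $S_H$ for a map that is asymptotically conformal on the complementary quasidisk (which is precisely the \cite{GS}/\cite{EMS} statement, quantified in Section~\ref{3} by the bound $\rho_{\Omega^*}^{-2}|S_{\widehat f|_{\Omega^*}}|\le 3\Vert\widehat\mu\Vert_\infty$) are sound.

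Your direct sketch of $\supseteq$, however, contains a genuine quantifier error. You choose an arbitrary $\lambda$ with $\Phi(\lambda)=\varphi$ and assert that $\varphi-\psi\in B_0(\D^*)$ ``forces the complex dilatation of $K=f_\lambda\circ f_\nu^{-1}$ to vanish at $C$, whence $\lambda\ast\nu^{-1}\in\Bel_0(\D)$.'' This implication is false as stated: $S_{K|_{D^+}}$ depends only on $K|_{D^+}$, hence only on the Teichm\"uller class of $\lambda$, whereas the dilatation of $K$ on $D^-$ depends on the representative. Already for $\nu=0$, $\varphi=\psi=0$ one may take $\lambda$ to be the complex dilatation of a quasiconformal self-map of $\D$ that restricts to the identity on $\S1$ but whose dilatation does not vanish at the boundary; then $\varphi-\psi=0\in B_0(\D^*)$ while $\lambda\ast\nu^{-1}=\lambda\notin\Bel_0(\D)$. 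What Kahn's theorem actually provides is the \emph{existence} of some $\mu\in\Bel_0(\D)$ with $\Phi(\mu\ast\nu)=\varphi$, i.e.\ of some asymptotically conformal extension of $K|_{D^+}$ across $C$; the candidate $\mu$ must be taken from that representative, not as $\lambda\ast\nu^{-1}$ for your initially chosen $\lambda$. Since you explicitly defer this step to \cite{GL} and to the machinery of Section~\ref{3}, and since your final citation-based argument is complete, the proof as a whole stands; but the displayed implication would not survive being made rigorous and should be replaced by the existential statement.
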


The fact that $\beta(T) \cap \{B_0(\D^*)+\psi\}$ contains $\Phi \circ r_\nu^{-1}(\Bel_0(\D))$
was proved in \cite{GS}. The converse inclusion is due to Kahn (see also Gardiner and Lakic \cite[Section 16.8]{GL}).
As a consequence from a property of the conformally natural section, 
a different proof of Theorem \ref{B_0-B} by using the barycentric extension,
which is also valid taking the action of a Fuchsian group into account,
can be also obtained from the arguments in \cite{EMS}.

By this result, we have the decomposition of the Bers embedding as
$$
\beta(T)=\bigsqcup_{[\tau] \in T_0 \backslash T} \beta \circ R_\tau^{-1}(T_0)
=\bigsqcup_{[\psi] \in B_0(\D^*) \backslash B(\D^*)}\beta(T) \cap (B_0(\D^*)+\psi).
$$
Each component $\beta(T) \cap (B_0(\D^*)+\psi)$ is biholomorphically equivalent to $T_0 \cong \beta(T_0)$. 
We call this decomposition of $T \cong \beta(T)$ the {\it affine foliated structure} of $T$ induced by $T_0$.

We consider the embedding of the quotient Teich\-m\"ul\-ler space $AT=T_0 \backslash T$ into 
the quotient Banach space $B_0(\D^*) \backslash B(\D^*)$. In the equation of Theorem \ref{B_0-B},
when we have the inclusion $\subset$, we see that the quotient map 
$\widehat \beta: T_0 \backslash T \to B_0(\D^*) \backslash B(\D^*)$ is well-defined.
This map is called the {\it quotient Bers embedding}. By showing that $\widehat \beta$ is a local homeomorphism,
a complex structure on $AT$ modeled on $B_0(\D^*) \backslash B(\D^*)$ was given in \cite{GS}.
The converse inclusion $\supset$ in Theorem \ref{B_0-B} implies the stronger result that $\widehat \beta$ is a homeomorphism onto the image,
and in particular $\widehat \beta$ is injective.

\begin{corollary}
The quotient Bers embedding $\widehat \beta:T_0 \backslash T \to B_0(\D^*) \backslash B(\D^*)$
is well-defined to be a homeomorphism onto the image.
\end{corollary}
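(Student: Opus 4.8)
The plan is to derive the corollary as a purely formal consequence of Theorem \ref{B_0-B}, organizing everything around the square
\[
\begin{CD}
T @>\beta>> B(\D^*)\\
@VpVV @VVPV\\
T_0 \backslash T @>\widehat\beta>> B_0(\D^*) \backslash B(\D^*),
\end{CD}
\]
where $p$ and $P$ are the canonical quotient projections. The first step is well-definedness. Since $R_{\tau_2}^{-1}(\tau'')=\tau''\ast\tau_2$, the set $R_{\tau_2}^{-1}(T_0)=T_0\ast\tau_2$ is exactly the $T_0$-coset of $\tau_2$, so $p(\tau_1)=p(\tau_2)$ holds precisely when $\tau_1\in R_{\tau_2}^{-1}(T_0)$. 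In that case the inclusion $\subset$ of Theorem \ref{B_0-B} gives $\beta(\tau_1)\in\beta(T)\cap\{B_0(\D^*)+\beta(\tau_2)\}\subset B_0(\D^*)+\beta(\tau_2)$, hence $P(\beta(\tau_1))=P(\beta(\tau_2))$. Thus $\widehat\beta([\tau]):=[\beta(\tau)]$ is well defined and the square commutes.

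The second step is injectivity, which is where the converse inclusion of Theorem \ref{B_0-B} (the contribution of Kahn, see \cite[Section 16.8]{GL}, resp.\ the barycentric-extension argument derivable from \cite{EMS}) does all the work. If $P(\beta(\tau_1))=P(\beta(\tau_2))$, then $\beta(\tau_1)\in B_0(\D^*)+\beta(\tau_2)$; combining this with $\beta(\tau_1)\in\beta(T)$ yields $\beta(\tau_1)\in\beta(T)\cap\{B_0(\D^*)+\beta(\tau_2)\}=\beta\bigl(R_{\tau_2}^{-1}(T_0)\bigr)$, and injectivity of $\beta$ forces $\tau_1\in R_{\tau_2}^{-1}(T_0)$, i.e.\ $p(\tau_1)=p(\tau_2)$. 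Hence $\widehat\beta$ is injective.

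The third step is bicontinuity. Continuity of $\widehat\beta$ is automatic: $P\circ\beta$ is continuous and constant on the fibers of the quotient map $p$, so it descends to a continuous map. For the inverse I would show that $\widehat\beta$ is an open map onto $B_0(\D^*)\backslash B(\D^*)$, a fortiori onto its image: the projection $P$ is open (quotient of a Banach space by a closed subspace), $\beta$ is a homeomorphism onto $\beta(T)$, which is a bounded domain and hence open in $B(\D^*)$, and $p$ is surjective. Therefore, for open $U\subset T_0\backslash T$, the set $p^{-1}(U)$ is open in $T$, $\beta(p^{-1}(U))$ is open in $\beta(T)$ and thus in $B(\D^*)$, and $\widehat\beta(U)=P\bigl(\beta(p^{-1}(U))\bigr)$ is open. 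A continuous open injection is a homeomorphism onto its image, which completes the proof.

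I expect essentially no genuine obstacle at the level of the corollary itself: all the substance is contained in Theorem \ref{B_0-B}, and in particular in its converse inclusion, which is exactly what upgrades the well-defined quotient map to an injective one. The only points deserving a word of care are that $\beta(T)$ is genuinely open in $B(\D^*)$ (so that images of open subsets of $T$ remain open after applying $\beta$) and that $P$ is an open map; both are standard, and with them in hand the remainder is a diagram chase.
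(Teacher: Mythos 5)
Your proof is correct and follows essentially the same route as the paper: the inclusion $\subset$ of Theorem \ref{B_0-B} gives well-definedness, the converse inclusion $\supset$ gives injectivity, and the homeomorphism property follows from the standard openness of the quotient projections together with $\beta$ being a homeomorphism onto the open set $\beta(T)$. The paper leaves the topological diagram chase implicit (it spells it out only later, in the $p$-integrable setting), so your write-up merely supplies details the author takes for granted.
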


\section{General principle}\label{3}

In this section, we prepare a basic argument to prove the injectivity of
the quotient Bers embedding.
This is carried out based on Theorem \ref{B_0-B}. We keep using the same notations throughout this section.

We fix $\psi=\Phi(\nu) \in \beta(T) \subset B(\D^*)$ for any $\nu \in \Bel(\D)$, and
take $\varphi \in B_0(\D^*)$ such that $\varphi +\psi \in \beta(T)$.
For a quasiconformal homeomorphism $f_\nu$ of $\Chat$ that is conformal on $\D^*$, we
set $\Omega=f_\nu(\D)$ and $\Omega^*=f_\nu(\D^*)$. Under these circumstances,
Theorem \ref{B_0-B} implies the following.

\begin{proposition}\label{basic}
There exists 
a quasiconformal homeomorphism $\widehat f:\Chat \to \Chat$ conformal on $\Omega^*$
and asymptotically conformal on $\Omega$ such that $S_{\widehat f \circ f_\nu|_{\D^*}}=\varphi +\psi$.
\end{proposition}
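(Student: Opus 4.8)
The plan is to translate Theorem~\ref{B_0-B} from the language of Beltrami coefficients on $\D$ into that of quasiconformal maps of $\Chat$, and to obtain $\widehat f$ as a left factor of such a map. First I would apply Theorem~\ref{B_0-B} with the given $\psi=\Phi(\nu)$: since $\varphi\in B_0(\D^*)$ and $\varphi+\psi\in\beta(T)$, the form $\varphi+\psi$ belongs to $\beta(T)\cap\{B_0(\D^*)+\psi\}=\Phi\circ r_\nu^{-1}(\Bel_0(\D))$, so there is some $\mu_0\in\Bel_0(\D)$ with $\Phi(r_\nu^{-1}(\mu_0))=\varphi+\psi$. Unravelling the group operation on $\Bel(\D)$, we have $r_\nu^{-1}(\mu_0)=r_{\nu^{-1}}(\mu_0)=\mu_0\ast\nu$, hence $\Phi(\mu_0\ast\nu)=\varphi+\psi$.

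Next I would realize this identity by a map of the sphere. Let $F=f_{\mu_0\ast\nu}$ be a quasiconformal homeomorphism of $\Chat$ that is conformal on $\D^*$ and has complex dilatation $\mu_0\ast\nu$ on $\D$; then by the definition of the Bers projection $S_{F|_{\D^*}}=\Phi(\mu_0\ast\nu)=\varphi+\psi$. Now set
\[
\widehat f:=F\circ f_\nu^{-1},
\]
a quasiconformal self-homeomorphism of $\Chat$ with $\widehat f\circ f_\nu=F$, so that the required Schwarzian identity $S_{\widehat f\circ f_\nu|_{\D^*}}=\varphi+\psi$ is automatic. Since $f_\nu^{-1}$ maps $\Omega^*=f_\nu(\D^*)$ conformally onto $\D^*$ and $F$ is conformal there, $\widehat f$ is conformal on $\Omega^*$. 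Thus two of the three asserted properties are immediate, and I expect the asymptotic conformality of $\widehat f$ on $\Omega=f_\nu(\D)$ to be the only substantive point.

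For that, write a point of $\Omega$ as $w=f_\nu(z)$ with $z\in\D$ and compute $\mu_{\widehat f}(w)$ from $F=\widehat f\circ f_\nu$ by the chain rule for complex dilatations. Substituting $\mu_{f_\nu}=\nu$ and $\mu_F=\mu_0\ast\nu$ on $\D$ and solving, one finds that $\mu_{\widehat f}(f_\nu(z))$ equals, up to a unimodular factor,
\[
\frac{(\mu_0\ast\nu)(z)-\nu(z)}{1-\overline{\nu(z)}\,(\mu_0\ast\nu)(z)},
\]
which by the formula for $\ast$ recalled in Section~\ref{2}, applied to the relation $(\mu_0\ast\nu)\ast\nu^{-1}=\mu_0$, is $\mu_0(f^\nu(z))$ up to another unimodular factor. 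Hence $|\mu_{\widehat f}(f_\nu(z))|=|\mu_0(f^\nu(z))|$ for a.e.\ $z\in\D$. Since $f_\nu$ is a homeomorphism of $\Chat$, $w\to\partial\Omega$ is equivalent to $z\to\partial\D$, and since $f^\nu$ extends continuously to a self-homeomorphism of $\partial\D$, also $f^\nu(z)\to\partial\D$; as $\mu_0\in\Bel_0(\D)$ this forces $\mu_{\widehat f}(w)\to 0$ as $w\to\partial\Omega$, i.e.\ $\widehat f$ is asymptotically conformal on $\Omega$. The only step needing real care is this bookkeeping --- identifying $\mu_{\widehat f}\circ f_\nu$ with the transported coefficient $\mu_0\circ f^\nu$ up to unimodular factors, so that membership of $\mu_0$ in $\Bel_0(\D)$ transfers to vanishing of $\mu_{\widehat f}$ at $\partial\Omega$; everything else is a direct reading of Theorem~\ref{B_0-B}.
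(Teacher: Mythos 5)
Your proposal is correct and follows essentially the same route as the paper: apply Theorem~\ref{B_0-B} to produce $\mu\in\Bel_0(\D)$ with $\Phi(\mu\ast\nu)=\varphi+\psi$, set $\widehat f=f_{\mu\ast\nu}\circ f_\nu^{-1}$, and transport the dilatation via the conformal map $f^\nu\circ f_\nu^{-1}:\Omega\to\D$ so that $|\mu_{\widehat f}(f_\nu(z))|=|\mu(f^\nu(z))|$ vanishes at $\partial\Omega$. The chain-rule bookkeeping you flag as the substantive point is exactly what the paper records with the pull-back formula $(g_\nu^*\mu)(z)=\mu(g_\nu(z))\overline{g_\nu'(z)}/g_\nu'(z)$.
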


\begin{proof}
By Theorem \ref{B_0-B}, there is some $\mu \in \Bel_0(\D)$ such that $\Phi \circ r_\nu^{-1}(\mu)=\Phi(\mu \ast \nu)=\varphi +\psi$.
We consider $f_{\mu \ast \nu}$, the quasiconformal self-homeomorphism of $\Chat$ conformal on $\D^*$ and
quasiconformal on $\D$ with the complex dilatation $\mu \ast \nu$. Then, we define $\widehat f=f_{\mu \ast \nu} \circ f_\nu^{-1}$,
which satisfies $S_{\widehat f \circ f_\nu|_{\D^*}}=S_{f_{\mu \ast \nu}|_{\D^*}}=\varphi +\psi$.
Moreover, $\widehat f$ is conformal on $\Omega^*$ and quasiconformal on $\Omega$ with the complex dilatation
$(g_\nu^*\mu)(z)=\mu(g_\nu(z))\overline{g'_\nu(z)}/g'_\nu(z)$, where $g_\nu=f^\nu \circ f_\nu^{-1}:\Omega \to \D$ is the conformal
homeomorphism (Riemann map).
Hence, $\widehat f$ is asymptotically conformal on $\Omega$.
\end{proof}

The complex dilatation $\widehat \mu$ of
$\widehat f$ on $\Omega$ vanishes at the boundary $\partial \Omega$. In particular, we can choose a
compact subset $\Omega_0 \subset \Omega$ such that
$$
3\, \Vert \widehat \mu |_{\Omega-\Omega_0} \Vert_\infty < \delta(\Vert \nu \Vert_\infty),
$$
where $\delta(\Vert \nu \Vert_\infty)>0$ is a constant given later 
in Lemma \ref{reflection} depending only on $\Vert \nu \Vert_\infty$.

We decompose $\widehat f$ into $\widehat f_0 \circ \widehat f_1$ as follows.
The quasiconformal homeomorphism $\widehat f_1:\Chat \to \Chat$ is chosen so that its complex dilatation coincides with
$\widehat \mu$ on $\Omega-\Omega_0$ and zero elsewhere. Then, 
$\widehat f_0$ is defined to be $\widehat f \circ \widehat f_1^{-1}$, whose complex dilatation has
a support on the compact subset $\widehat f_1(\Omega_0) \subset \widehat f_1(\Omega)$.
We take $\varphi_1 \in B(\D^*)$ so that 
$$
S_{\widehat f_1 \circ f_\nu|_{\D^*}}=\varphi_1+\psi.
$$
This satisfies $\Vert \varphi_1 \Vert_\infty < \delta(\Vert \nu \Vert_\infty)$. Indeed,
\begin{align*}
\rho^{-2}_{\D^*}(z)|\varphi_1(z)|&
=\rho^{-2}_{\D^*}(z)|S_{\widehat f_1 \circ f_\nu|_{\D^*}}(z)-S_{f_\nu|_{\D^*}}(z)|\\
&=\rho^{-2}_{\Omega^*}(\zeta)|S_{\widehat f_1|_{\Omega^*}}(\zeta)|
\end{align*}
for $\zeta=f_\nu(z)$ and this is bounded by $3 \Vert \widehat \mu|_{\Omega-\Omega_0} \Vert_\infty$
(see \cite[Theorem II.3.2]{Leh}).

We utilize a 
local section for the Bers projection $\Phi:\Bel(\D) \to \beta(T) \cap B(\D^*)$,
which is a generalization of the Ahlfors--Weill section defined in a neighborhood of the origin, and was
constructed by using a quasiconformal reflection originally due to
Ahlfors \cite{Ah}. This was improved later with the aid of the barycentric extension
by Earle and Nag \cite{EN} to hold compatibility
with the action of M\"obius transformations. 
The following assertion can be also proved by the arguments in \cite[Section 14.4]{GL} and \cite[Section II.4.2]{Leh}.

\begin{lemma}\label{reflection}
Let $f_\nu:\Chat \to \Chat$ be the quasiconformal homeomorphism with 
complex dilatation $\nu \in \sigma(\Bel(\D))$ obtained by the barycentric extension, 
which is conformal on $\D^*$ with $S_{f_\nu|_{\D^*}}=\psi$.
Then, there exists a constant
$\delta=\delta(\Vert \nu \Vert_\infty) >0$ depending only on $\Vert \nu \Vert_\infty$ such that
for every $\varphi \in B(\D^*)$ with $\Vert \varphi \Vert_\infty < \delta$,
there is a quasiconformal homeomorphism $\widehat f:\Chat \to \Chat$ conformal on $\Omega^*$ such that
$S_{\widehat f \circ f_\nu|_{\D^*}}=\varphi +\psi$ and the complex dilatation $\widehat \mu$ of $\widehat f$
on $\Omega$ satisfies
$$
|\widehat \mu(f_\nu(z))| \leq \frac{1}{\delta}\rho_{\D^*}^{-2}(z^*)|\varphi(z^*)|
$$
for every $z \in \D$. Here, $z^*=(\bar z)^{-1}$ is the reflection of $z \in \D$ with respect to $\S1$. 
\end{lemma}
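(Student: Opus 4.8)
The plan is to construct $\widehat f$ directly from a quasiconformal reflection associated to the conformal map $f_\nu|_{\D^*}$, in the same spirit as the classical Ahlfors--Weill section but with the base point $\psi$ shifted away from the origin. First I would recall the Ahlfors--Weill type construction: given $\varphi \in B(\D^*)$ of small norm, one solves the Schwarzian equation $S_h = \varphi$ on $\D^*$, obtains a locally injective map $h$, and extends it across $\S1$ into $\D$ by a reflection formula using the osculating M\"obius transformations. The novelty here is that instead of reflecting across $\S1$ we must reflect across $\partial\Omega = f_\nu(\S1)$, using a quasiconformal reflection $\lambda_\nu$ of $\Chat$ that fixes $\partial\Omega$ and swaps $\Omega$ and $\Omega^*$. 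Since $\nu \in \sigma(\Bel(\D))$ comes from the barycentric extension, the domain $\Omega^*$ is a quasidisk whose reflection can be taken conformally natural, following Earle--Nag \cite{EN}; the dilatation of this fixed reflection is controlled purely by $\Vert \nu \Vert_\infty$, which is where the dependence $\delta = \delta(\Vert\nu\Vert_\infty)$ enters.

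The key steps in order: (i) Fix the quasiconformal reflection $\lambda_\nu$ for $\partial\Omega$ with $\Vert \mu_{\lambda_\nu}\Vert_\infty =: k_0 <1$ depending only on $\Vert\nu\Vert_\infty$, and record that near $\partial\Omega$ the reflection is well-behaved (asymptotically anticonformal in an appropriate sense, or at least with bounded distortion). (ii) Given $\varphi$ with $\Vert\varphi\Vert_\infty < \delta$, form $\widetilde\varphi := \Phi(\nu)$-shifted data, i.e. work with the conformal map $F$ on $\D^*$ having $S_{F \circ f_\nu|_{\D^*}} = \varphi+\psi$; equivalently $F = f_{\mu\ast\nu}\circ f_\nu^{-1}$ restricted suitably. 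Write down the candidate extension $\widehat f$ on $\Omega$ by the reflected formula $\widehat f = (\text{osculating M\"ob}) \circ \lambda_{\D^*} \circ \lambda_\nu$ transported through $f_\nu$, mimicking the Bers--Ahlfors--Weill formula in the chart given by $f_\nu|_{\D^*}$. (iii) Compute the complex dilatation of $\widehat f$ on $\Omega$: pulling back to $\D$ via $f_\nu$, a standard computation shows $|\widehat\mu(f_\nu(z))|$ is comparable to $\rho_{\D^*}^{-2}(z^*)|\varphi(z^*)|$ up to a constant governed by $k_0$ and the smallness threshold, giving exactly the stated inequality with the factor $1/\delta$. (iv) Check the global homeomorphism property: local injectivity of $F$ on $\D^*$ plus the reflection across the quasicircle $\partial\Omega$ yields a global homeomorphism of $\Chat$ provided $\Vert\varphi\Vert_\infty$ is below the threshold $\delta$ — this is the usual Ahlfors argument that a sufficiently small Schwarzian forces univalence, now relativized to the quasidisk $\Omega^*$.

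The main obstacle I expect is step (iii)--(iv) done \emph{uniformly} in $\nu$: one must verify that the smallness constant $\delta$ can be chosen depending only on $\Vert\nu\Vert_\infty$ and not on the full geometry of $\Omega$. This is precisely why the hypothesis $\nu \in \sigma(\Bel(\D))$ (barycentric extension) is imposed — it gives a reflection whose dilatation bound is uniform over all $\nu$ of a given essential sup, via the conformal naturality and compactness arguments of Earle--Nag \cite{EN} and the normalization fixing $1,i,-1$. Concretely, the delicate point is controlling the osculating M\"obius transformations and the resulting Beltrami coefficient of $\widehat f$ near $\partial\Omega$: one needs an estimate of the form "$\mu_{\widehat f}$ is small wherever $\rho_{\D^*}^{-2}(z^*)|\varphi(z^*)|$ is small, with constants depending only on $k_0$", which is obtained by a direct but careful distortion estimate for the reflected map, closely following \cite[Section II.4.2]{Leh} and \cite[Section 14.4]{GL} but carried out in the $f_\nu$-coordinate. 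Once the uniform reflection is in hand, the remaining verifications — that $\widehat f$ is conformal on $\Omega^*$ (immediate, since it agrees with $F$ there) and that $S_{\widehat f \circ f_\nu|_{\D^*}} = \varphi+\psi$ (by construction) — are routine.
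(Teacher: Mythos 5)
Your proposal is correct and follows essentially the same route the paper takes: the paper proves this lemma by citation to the generalized Ahlfors--Weill local section built from a quasiconformal reflection across $\partial\Omega$ (Ahlfors \cite{Ah}, made conformally natural by Earle--Nag \cite{EN}, as in \cite[Section 14.4]{GL} and \cite[Section II.4.2]{Leh}), which is exactly the construction you outline, including the key observation that the reflection's dilatation --- and hence $\delta$ --- depends only on $\Vert\nu\Vert_\infty$ because $\nu$ comes from the barycentric extension. The only caveat is that in your step (ii) the map $F$ must be obtained by solving the Schwarzian equation and proving univalence via the reflection (as you do in step (iv)), not via the formula $f_{\mu\ast\nu}\circ f_\nu^{-1}$, which would presuppose $\varphi+\psi\in\beta(T)$.
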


We apply this lemma to the previous $\nu$ with $\psi=\Phi(\nu)$ and $\varphi_1$
with $S_{\widehat f_1 \circ f_\nu|_{\D^*}}=\varphi_1+\psi$.
We may always assume that $\nu$ is obtained by the barycentric extension.
Replacing $\widehat f_1$ with the quasiconformal homeomorphism obtained in Lemma \ref{reflection},
we can further assume that the complex dilatation $\widehat \mu_1$ of $\widehat f_1$ satisfies
the above inequality. We remark that $\widehat f_1|_{\Omega^*}$ does not change by this replacement. We use the same 
$\widehat f_0$ as before; correspondingly, $\widehat f|_\Omega$ changes but $\widehat f|_{\Omega^*}$ does not.

Having $f_\nu$, $\widehat f_1$ and $\widehat f_0$ already,
we take the normalized quasiconformal homeomorphisms $f^\nu:\D \to \D$,
$f_1:\D \to \D$ and $f_0:\D \to \D$, and the
conformal homeomorphisms (Riemann mappings) $g_\nu:\Omega \to \D$, $g_1:\widehat f_1(\Omega) \to \D$ and
$g:\widehat f(\Omega) \to \D$ so that the following commutative diagram holds:
$$ 
\xymatrix{
\D \ar[rr]^{f^\nu} \ar[rrd]_{f_\nu} & & \D \ar[rr]^{f_1} & & \D \ar[rr]^{f_0} & & \D \\
& & \Omega \ar[rr]_{\widehat f_1} \ar[u]_{g_\nu} & &\widehat f_1(\Omega) \ar[rr]_{\widehat f_0} \ar[u]_{g_1} & &\widehat f(\Omega) \ar[u]_{g}\ .
}
$$
We note that $g_\nu$, $g_1$ and $g$ are uniquely determined.
Set $f=f_0 \circ f_1$. Then, the complex dilatation of $\widehat f \circ f_\nu$ on $\D$ coincides with
that of $f \circ f^\nu$. Hence, its image under the Bers projection $\Phi$ is $\varphi +\psi$.
This is also true for $f_1$ and $\varphi_1$ instead of $f$ and $\varphi$.

We consider $\varphi-\varphi_1=(\varphi+\psi)-(\varphi_1+\psi)$ for $z \in \D^*$, which is equal to
\begin{align*}
S_{\widehat f \circ f_\nu|_{\D^*}}(z)-S_{\widehat f_1 \circ f_\nu|_{\D^*}}(z)&=
S_{\widehat f_0 \circ \widehat f_1 \circ f_\nu|_{\D^*}}(z)-S_{\widehat f_1 \circ f_\nu|_{\D^*}}(z).
\end{align*}
The complex dilatation $\mu_0$ of $f_0$ is equal to the push-forward $(g_1)_* \widehat \mu_{0}$ of
the complex dilatation $\widehat \mu_0$ of $\widehat f_0$ by the conformal homeomorphism $g_1$. 
Since the support of $\widehat \mu_0$ is on the compact subset of
$\widehat f_1(\Omega)$, the support of $\mu_0$ is on a compact subset of $\D$.
Similarly, the complex dilatation $\mu_1$ of $f_1$ is equal to $(g_\nu)_* \widehat \mu_{1}$.
Then, we see that $\varphi-\varphi_1$ coincides with $\Phi(\mu_0 \ast \mu_1 \ast \nu)-\Phi(\mu_1 \ast \nu)$.

The above arguments are summarized as follows.

\begin{proposition}\label{summary}
Let $f_\nu:\Chat \to \Chat$ be a quasiconformal homeomorphism with 
complex dilatation $\nu \in \sigma(\Bel(\D))$, which is conformal on $\D^*$ with $S_{f_\nu|_{\D^*}}=\psi$.
Let $\widehat f: \Chat \to \Chat$ be a quasiconformal homeomorphism
with complex dilatation $\widehat\mu$ on $\Omega=f_\nu(\D)$ vanishing at the boundary
that is conformal on $\Omega^*=f_\nu(\D^*)$ with
$S_{\widehat f \circ f_\nu|_{\D^*}}=\varphi +\psi$. 
Then, $\widehat f$ is decomposed into two quasiconformal homeomorphisms $\widehat f_0$ and 
$\widehat f_1$ of $\Chat$ with $\widehat f=\widehat f_0 \circ \widehat f_1$, where
$\widehat f_1$ is conformal on $\Omega^*$ with $S_{\widehat f_1 \circ f_\nu|_{\D^*}}=\varphi_1+\psi$,
satisfying the following properties:
\begin{enumerate}
\item
the complex dilatation $\widehat \mu_1$ of $\widehat f_1$ on $\Omega$
satisfies
$$
|\widehat \mu_1 \circ f_\nu(z)| \leq \frac{1}{\delta}\,\rho_{\D^*}^{-2}(z^*)|\varphi_1(z^*)|
$$
for some $\delta>0$ and for every $z \in \D$;
\item
the support of the complex dilatation $\mu_0$ of the normalized quasiconformal homeomorphism 
$f_0:\D \to \D$, which is conformally conjugate to
$\widehat f_0:\widehat f_1(\Omega) \to \widehat f(\Omega)$, is contained in a compact subset of $\D$;
\item
for the complex dilatation $\mu_1$ of the normalized quasiconformal homeomorphism 
$f_1:\D \to \D$, which is conformally conjugate to $\widehat f_1:\Omega \to \widehat f_1(\Omega)$,
we have
$$
\varphi-\varphi_1=\Phi(\mu_0 \ast \mu_1 \ast \nu)-\Phi(\mu_1 \ast \nu).
$$
\end{enumerate}
\end{proposition}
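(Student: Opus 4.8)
The statement merely repackages the construction developed in the discussion above, so the plan is to lay that construction out in the order $(1)$--$(3)$ and to pin down the relevant norm bounds and supports at each stage. I would start from Proposition \ref{basic}, which yields a quasiconformal $\widehat f$ of $\Chat$, conformal on $\Omega^*$, asymptotically conformal on $\Omega$, with $S_{\widehat f\circ f_\nu|_{\D^*}}=\varphi+\psi$; write $\widehat\mu$ for its complex dilatation on $\Omega$, which vanishes at $\partial\Omega$. Because of that vanishing, choose a compact $\Omega_0\subset\Omega$ with $3\,\Vert\widehat\mu|_{\Omega-\Omega_0}\Vert_\infty<\delta(\Vert\nu\Vert_\infty)$, where $\delta$ is the constant of Lemma \ref{reflection}, and split $\widehat f=\widehat f_0\circ\widehat f_1$ by letting $\widehat f_1$ be the quasiconformal homeomorphism of $\Chat$ whose complex dilatation equals $\widehat\mu$ on $\Omega-\Omega_0$ and $0$ elsewhere, and putting $\widehat f_0=\widehat f\circ\widehat f_1^{-1}$; then $\widehat f_1$ is conformal on $\Omega^*$ and $\widehat f_0$ has complex dilatation supported on the compact set $\widehat f_1(\Omega_0)\subset\widehat f_1(\Omega)$.

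Next I would bring in $\varphi_1\in B(\D^*)$ defined by $S_{\widehat f_1\circ f_\nu|_{\D^*}}=\varphi_1+\psi$ and verify $\Vert\varphi_1\Vert_\infty<\delta(\Vert\nu\Vert_\infty)$: writing $\varphi_1=S_{\widehat f_1\circ f_\nu|_{\D^*}}-S_{f_\nu|_{\D^*}}$ and using the composition law for Schwarzians together with the conformal invariance of $\rho^{-2}|\cdot|$, one gets $\rho_{\D^*}^{-2}(z)|\varphi_1(z)|=\rho_{\Omega^*}^{-2}(\zeta)|S_{\widehat f_1|_{\Omega^*}}(\zeta)|$ with $\zeta=f_\nu(z)$, and the right-hand side is at most $3\,\Vert\widehat\mu|_{\Omega-\Omega_0}\Vert_\infty$ by the usual bound on the Schwarzian of a conformal map with a quasiconformal extension of given dilatation (\cite[Theorem II.3.2]{Leh}). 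This bound lets me apply Lemma \ref{reflection} to $\nu$ and $\varphi_1$, producing a quasiconformal homeomorphism that is again conformal on $\Omega^*$ with the same Schwarzian $\varphi_1+\psi$ there and whose complex dilatation $\widehat\mu_1$ on $\Omega$ satisfies the reflection estimate $(1)$; I replace the earlier $\widehat f_1$ by this one. The point needing care is the consistency of this replacement: since the Schwarzian determines the conformal map up to a M\"obius normalization, $\widehat f_1|_{\Omega^*}$ --- and hence the domains $\widehat f_1(\Omega^*)$ and $\widehat f_1(\Omega)$ --- are unchanged, so keeping the same $\widehat f_0$ and setting $\widehat f:=\widehat f_0\circ\widehat f_1$ again gives a homeomorphism conformal on $\Omega^*$ with $S_{\widehat f\circ f_\nu|_{\D^*}}=\varphi+\psi$, while the complex dilatation of $\widehat f_0$ is still supported on the compact set $\widehat f_1(\Omega_0)$.

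Finally I would descend to the disk. Choose the normalized quasiconformal homeomorphisms $f^\nu,f_1,f_0:\D\to\D$ and the Riemann maps $g_\nu:\Omega\to\D$, $g_1:\widehat f_1(\Omega)\to\D$, $g:\widehat f(\Omega)\to\D$, all uniquely determined, so that the commutative diagram above holds, and set $f=f_0\circ f_1$. By conformal conjugation the complex dilatations are $\mu_1=(g_\nu)_*\widehat\mu_1$ and $\mu_0=(g_1)_*\widehat\mu_0$; as $\widehat\mu_0$ is supported on a compact subset of $\widehat f_1(\Omega)$ and $g_1$ is a homeomorphism onto $\D$, the support of $\mu_0$ is a compact subset of $\D$, which is $(2)$. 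For $(3)$, the complex dilatation of $\widehat f\circ f_\nu$ on $\D$ coincides with that of $f\circ f^\nu=(f_0\circ f_1)\circ f^\nu$, so its image under $\Phi$ is $\varphi+\psi$, i.e.\ $\Phi(\mu_0\ast\mu_1\ast\nu)=\varphi+\psi$; running the same argument with $\widehat f_1$, $f_1$ in place of $\widehat f$, $f$ gives $\Phi(\mu_1\ast\nu)=\varphi_1+\psi$; subtracting the two identities yields $\varphi-\varphi_1=\Phi(\mu_0\ast\mu_1\ast\nu)-\Phi(\mu_1\ast\nu)$. The main obstacle throughout is the bookkeeping in the second step --- ensuring the Ahlfors--Weill replacement leaves $\widehat f|_{\Omega^*}$ and the compact support of $\widehat f_0$'s dilatation intact --- rather than any single hard estimate.
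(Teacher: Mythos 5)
Your proposal reproduces the paper's construction essentially verbatim: the same choice of a compact $\Omega_0$ with $3\,\Vert\widehat\mu|_{\Omega-\Omega_0}\Vert_\infty<\delta$, the same splitting $\widehat f=\widehat f_0\circ\widehat f_1$, the same bound $\Vert\varphi_1\Vert_\infty<\delta$ via \cite[Theorem II.3.2]{Leh}, the same replacement of $\widehat f_1$ by the map from Lemma \ref{reflection} (keeping $\widehat f_1|_{\Omega^*}$ and hence $\widehat f_0$ intact), and the same descent to $\D$ through the commutative diagram with push-forwards $\mu_0=(g_1)_*\widehat\mu_0$, $\mu_1=(g_\nu)_*\widehat\mu_1$. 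It is correct and takes the same approach as the paper.
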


In the remainder of this section, 
we state two results, which provides a foundation for the argument on the affine foliated structure given by
the Bers embedding. 

\begin{proposition}\label{base1}
Let $f_\mu$ and $f_\nu$ be the quasiconformal homeomorphisms of $\Chat$
that are conformal on $\D^*$ and have complex dilatations $\mu$ and $\nu$ respectively on $\D$.
Let $\Omega=f_\nu(\D)$ and $\Omega^*=f_\nu(\D^*)$. Then,
\begin{align*}
|S_{f_\mu \circ f_\nu^{-1}|_{\Omega^*}}(\zeta)| 
&\leq \frac{3\rho_{\Omega^*}(\zeta)}{\sqrt{\pi}}
\left(\int_\Omega \frac{|\mu(f_\nu^{-1}(w))-\nu(f_\nu^{-1}(w))|^2}{(1-|\mu(f_\nu^{-1}(w))|^2)(1-|\nu(f_\nu^{-1}(w))|^2)}
\frac{dudv}{|w-\zeta|^4}\right)^{1/2}
\end{align*}
holds for $\zeta \in \Omega^*$, where $\rho_{\Omega^*}(\zeta)$ is the hyperbolic density on $\Omega^*$.
\end{proposition}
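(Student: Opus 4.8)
The plan is to peel the statement down to Gronwall's area theorem for univalent functions, in three reductions; the notation of the statement is kept throughout.

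\emph{First I reduce to an intrinsic statement about $g:=f_\mu\circ f_\nu^{-1}$.} This map is conformal on $\Omega^*$ and has complex dilatation $\kappa$ on $\Omega$; by the chain rule formula of Section~\ref{2}, $|\kappa(w)|=|(\mu-\nu)/(1-\bar\nu\mu)|$ evaluated at $z=f_\nu^{-1}(w)$. The elementary identity $1-|(\mu-\nu)/(1-\bar\nu\mu)|^2=(1-|\mu|^2)(1-|\nu|^2)/|1-\bar\nu\mu|^2$ then gives
$$
\frac{|\kappa(w)|^2}{1-|\kappa(w)|^2}=\frac{|\mu(f_\nu^{-1}(w))-\nu(f_\nu^{-1}(w))|^2}{(1-|\mu(f_\nu^{-1}(w))|^2)(1-|\nu(f_\nu^{-1}(w))|^2)}.
$$
Hence it suffices to show that for any quasiconformal self-homeomorphism $g$ of $\Chat$ conformal on a quasidisk $\Omega^*$ with $\zeta\in\Omega^*$ and with complex dilatation $\kappa$ on $\Omega=\Chat\setminus\overline{\Omega^*}$, one has $|S_{g|_{\Omega^*}}(\zeta)|\le(3\rho_{\Omega^*}(\zeta)/\sqrt\pi)\,(\int_\Omega\frac{|\kappa|^2}{1-|\kappa|^2}\frac{du\,dv}{|w-\zeta|^4})^{1/2}$, there being nothing to prove when the integral diverges.

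\emph{Next I normalize $\zeta$ to $\infty$.} Conjugating $g$ by M\"obius transformations on the source and the target changes neither side: the Schwarzian of the conformal part and the modulus of $\kappa$ are invariant, precomposition with $m(w)=1/(w-\zeta)$ turns $du\,dv/|w-\zeta|^4$ into the plain area element, and a short computation gives $S_{g\circ m^{-1}}(\infty):=\lim_{u\to\infty}u^4 S_{g\circ m^{-1}}(u)=S_{g|_{\Omega^*}}(\zeta)$ together with $\lim_{u\to\infty}|u|^2\rho_{m(\Omega^*)}(u)=\rho_{\Omega^*}(\zeta)$. Thus I may assume $\zeta=\infty$, that $\Omega^*\ni\infty$ with $\Omega$ bounded, and --- after one further M\"obius postcomposition --- that $g(\infty)=\infty$ and $g(u)=u+c\,u^{-1}+\cdots$ near $\infty$, so that $S_{g|_{\Omega^*}}(\infty)=-6c$; the goal becomes $6|c|\le(3\rho_{\Omega^*}(\infty)/\sqrt\pi)\,(\int_\Omega\frac{|\kappa|^2}{1-|\kappa|^2}\,dA)^{1/2}$.

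\emph{Finally I compute $c$ and invoke the area theorem.} Choose a round disk $\Delta\supset\overline{\Omega}$ with $\partial\Delta\subset\Omega^*$. Since $u\,g'(u)=u-c\,u^{-1}+\cdots$ is holomorphic on $\Omega^*\setminus\{\infty\}\supset\Delta\setminus\overline\Omega$, Cauchy's theorem gives $\oint_{\partial\Delta}u\,g'(u)\,du=-2\pi i c$; on the other hand Stokes' formula (Cauchy--Pompeiu) for the $W^{1,2}_{\mathrm{loc}}$ map $g$ on the smooth disk $\Delta$, which avoids the possibly non-rectifiable quasicircle $\partial\Omega$, gives $\oint_{\partial\Delta}u\,dg=\int_\Delta\bar\partial g\,du\wedge d\bar u=-2i\int_\Omega\kappa\,\partial g\,dA$, whence $c=\frac1\pi\int_\Omega\kappa(u)\,\partial g(u)\,dA(u)$. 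By Cauchy--Schwarz $|c|\le\pi^{-1}(\int_\Omega\frac{|\kappa|^2}{1-|\kappa|^2}\,dA)^{1/2}(\int_\Omega(1-|\kappa|^2)|\partial g|^2\,dA)^{1/2}$, and the last factor equals $(\mathrm{Area}\,g(\Omega))^{1/2}$. Letting $\pi_{\Omega^*}\colon\D^*\to\Omega^*$ be the conformal map fixing $\infty$ with $\pi_{\Omega^*}(z)=\lambda z+O(1)$, $\lambda>0$, conformal invariance of the hyperbolic metric gives $\rho_{\Omega^*}(\infty)=2\lambda$, while $g\circ\pi_{\Omega^*}(z)=\lambda z+O(1)$ is univalent on $\D^*$, so the classical area theorem (see \cite{Leh}) yields $\mathrm{Area}\,g(\Omega)=\mathrm{Area}(\Chat\setminus g(\Omega^*))\le\pi\lambda^2$ (quasicircles having zero area). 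Combining, $6|c|\le6\pi^{-1}\sqrt\pi\,\lambda\,(\int_\Omega\frac{|\kappa|^2}{1-|\kappa|^2}\,dA)^{1/2}=(3\rho_{\Omega^*}(\infty)/\sqrt\pi)(\cdots)^{1/2}$, and undoing the normalizations proves the proposition. The one genuinely delicate point is this integration by parts for $g$ across $\partial\Omega$: interposing the smooth circle $\partial\Delta$ and using Stokes' theorem for Sobolev maps (with absolute continuity of $g$ on lines, and zero area of quasicircles for $\mathrm{Area}\,g(\Omega)=\mathrm{Area}(\Chat\setminus g(\Omega^*))$) is what makes it go through; the dilatation identity, the M\"obius bookkeeping, Cauchy--Schwarz and the area theorem are all routine.
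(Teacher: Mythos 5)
Your argument is correct and is, in substance, the same as the paper's: the paper simply cites \cite[Lemma 3.1 and Proposition 3.2]{Yan} (following Astala--Zinsmeister \cite{AZ}) for exactly the chain you carry out --- M\"obius normalization of $\zeta$ to $\infty$, the Cauchy--Pompeiu/Stokes representation of the leading Schwarzian coefficient as $\frac{1}{\pi}\int_\Omega\kappa\,\partial g\,dA$, Cauchy--Schwarz with the weight $1-|\kappa|^2$, and the area theorem. Your write-up just makes the cited computation self-contained, including the dilatation identity converting $|\kappa|^2/(1-|\kappa|^2)$ into the stated expression in $\mu$ and $\nu$, and the justification of Stokes' formula across the quasicircle by interposing the smooth circle $\partial\Delta$.
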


\begin{proof}
It was shown in Yanagishita \cite[Lemma 3.1]{Yan} applying the argument of
Astala and Zinsmeister \cite{AZ} that 
$$
|S_{f_\mu \circ f_\nu^{-1}|_{\Omega^*}}(\zeta)|=
\frac{3}{2\pi}\rho_{\Omega^*}^2(\zeta)
\left|\int_{\lambda^{-1}(\Omega)}\bar \partial_{z} G(z,\zeta)dxdy \right|,
$$
where $G(\cdot,\zeta)=\kappa(\cdot,\zeta) \circ f_\mu \circ f_\nu^{-1} \circ \lambda(\cdot,\zeta)$ and
$$
\kappa(z,\zeta)=-\frac{(|f_\nu^{-1}(\zeta)|^2-1)f'_\mu(f_\nu^{-1}(\zeta))}{z-f_\mu(f_\nu^{-1}(\zeta))},\quad
\lambda(z,\zeta)=\zeta-\frac{(|f_\nu^{-1}(\zeta)|^2-1)f'_\nu(f_\nu^{-1}(\zeta))}{z}.
$$
Then, by the estimate using the Cauchy-Schwarz inequality and
the area theorem as in the proof of \cite[Proposition 3.2]{Yan}, we have that
\begin{align*}
\left|\int_{\lambda^{-1}(\Omega)}\bar \partial_{z} G(z,\zeta)dxdy \right|^2
&\leq 4\pi \rho_{\Omega^*}^{-2}(\zeta) \int_{\Omega}\frac{|\alpha(w)|^2}
{1-|\alpha(w)|^2}\frac{dudv}{|w-\zeta|^4},
\end{align*}
where $\alpha$ is the complex dilatation of $f_\mu \circ f_\nu^{-1}$.
This yields the required inequality.
\end{proof}

\begin{proposition}\label{base2}
For $\mu_1, \mu_2, \nu \in \Bel(\D)$, we have
$$
|r_\nu(\mu_1)(\zeta)-r_\nu(\mu_2)(\zeta)| \leq \frac{|\mu_1(z)-\mu_2(z)|}{\sqrt{(1-|\mu_1(z)|^2)(1-|\mu_2(z)|^2)}}
$$
for $\zeta=f^\nu(z)$ with $z \in \D$.
\end{proposition}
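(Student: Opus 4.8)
The plan is to derive the pointwise inequality directly from the explicit chain rule formula for the right translation recorded above, combined with the elementary identity
\[
|1-\bar c\,a|^2=(1-|a|^2)(1-|c|^2)+|a-c|^2\qquad(a,c\in\D),
\]
which is checked by expanding both sides.

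First I would apply the formula for $\nu_1\ast\nu_2^{-1}$ with $\nu_1=\mu_i$ and $\nu_2=\nu$, obtaining, for $\zeta=f^\nu(z)$ and $i=1,2$,
\[
r_\nu(\mu_i)(\zeta)=\mu_i\ast\nu^{-1}(\zeta)=\frac{\mu_i(z)-\nu(z)}{1-\overline{\nu(z)}\,\mu_i(z)}\cdot\frac{\partial f^\nu(z)}{\overline{\partial f^\nu(z)}}.
\]
The factor $\partial f^\nu(z)/\overline{\partial f^\nu(z)}$ is common to both terms and has modulus $1$, so subtracting and taking absolute values gives
\[
\bigl|r_\nu(\mu_1)(\zeta)-r_\nu(\mu_2)(\zeta)\bigr|=\left|\frac{\mu_1(z)-\nu(z)}{1-\overline{\nu(z)}\,\mu_1(z)}-\frac{\mu_2(z)-\nu(z)}{1-\overline{\nu(z)}\,\mu_2(z)}\right|.
\]

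Next I would clear the two denominators. Writing $a=\mu_1(z)$, $b=\mu_2(z)$, $c=\nu(z)$, the numerator of $\tfrac{a-c}{1-\bar c a}-\tfrac{b-c}{1-\bar c b}$ equals $(a-c)(1-\bar c b)-(b-c)(1-\bar c a)$; expanding, the terms linear in $\bar c$ cancel (using $ab=ba$), leaving $(a-b)(1-|c|^2)$. Hence
\[
\bigl|r_\nu(\mu_1)(\zeta)-r_\nu(\mu_2)(\zeta)\bigr|=\frac{|a-b|\,(1-|c|^2)}{|1-\bar c a|\,|1-\bar c b|}.
\]
Finally I would bound the denominator from below: the displayed identity gives $|1-\bar c a|\ge\sqrt{(1-|a|^2)(1-|c|^2)}$ and likewise $|1-\bar c b|\ge\sqrt{(1-|b|^2)(1-|c|^2)}$, so $|1-\bar c a|\,|1-\bar c b|\ge(1-|c|^2)\sqrt{(1-|a|^2)(1-|b|^2)}$. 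The factor $1-|c|^2$ cancels and the asserted bound follows upon substituting back $a=\mu_1(z)$, $b=\mu_2(z)$.

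There is no real obstacle in this argument; the only points requiring care are keeping track of which variable is $z$ and which is $\zeta=f^\nu(z)$ in the chain rule formula, observing that the unimodular conformal factor $\partial f^\nu(z)/\overline{\partial f^\nu(z)}$ plays no role once moduli are taken, and performing the cancellation in the numerator correctly. (Conceptually, the last two steps express the fact that $a\mapsto(a-c)/(1-\bar c a)$ is an automorphism of $\D$ with Euclidean distortion $(1-|c|^2)/|1-\bar c a|^2$ at $a$, which is at most $1/\sqrt{(1-|a|^2)(1-|c|^2)}$; but the bare algebraic identity is the quickest route.)
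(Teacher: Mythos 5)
Your proposal is correct and follows essentially the same route as the paper's proof: apply the chain rule formula for $\mu_i\ast\nu^{-1}$, note the unimodular factor drops out, combine the two fractions so the numerator collapses to $(\mu_1(z)-\mu_2(z))(1-|\nu(z)|^2)$, and bound each denominator below via $|1-\overline{\nu(z)}\mu_i(z)|^2\ge(1-|\mu_i(z)|^2)(1-|\nu(z)|^2)$. The paper compresses all of this into a single ``simple computation''; your write-up just makes each of those steps explicit.
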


\begin{proof}
A simple computation shows that
\begin{align*}
|r_\nu(\mu_1)(\zeta)-r_\nu(\mu_2)(\zeta)|&=|\mu_1 \ast \nu^{-1}(\zeta)-\mu_2 \ast \nu^{-1}(\zeta)|\\
&= \left|\frac{\mu_1(z)-\nu(z)}{1-\overline{\nu(z)}\mu_1(z)}-\frac{\mu_2(z)-\nu(z)}{1-\overline{\nu(z)}\mu_2(z)}\right|\\
&= \frac{|\mu_1(z)-\mu_2(z)|(1-|\nu(z)|^2)}{|1-\overline{\nu(z)}\mu_1(z)||1-\overline{\nu(z)}\mu_2(z)|}
\leq \frac{|\mu_1(z)-\mu_2(z)|}{\sqrt{(1-|\mu_1(z)|^2)(1-|\mu_2(z)|^2)}}
\end{align*}
for $\zeta=f^\nu(z)$.
\end{proof}

\section{The $p$-integrable Teich\-m\"ul\-ler space}\label{4}

In this section, we prove the affine foliated structure of the
universal Teich\-m\"ul\-ler space $T$ 
induced by the $p$-integrable Teich\-m\"ul\-ler space $T^p$ for $p \geq 2$.
Later in this section, this is also extended to the Teich\-m\"ul\-ler space $T(\D/\Gamma)$ and 
the $p$-integrable Teich\-m\"ul\-ler space $T^p(\D/\Gamma)$ of a Riemann surface $\D/\Gamma$
for a certain Fuchsian group $\Gamma$.

A Beltrami coefficient $\mu \in \Bel(\D)$ is {\it $p$-integrable} for $p \geq 1$ if
$$
\Vert \mu \Vert_p^p:=\int_{\D} |\mu(z)|^p \rho_{\D}^2(z)dxdy <\infty,
$$
where $\rho_{\D}(z)=2/(1-|z|^2)$ is the hyperbolic density on $\D$.
The space of all $p$-integrable Beltrami coefficients on $\D$ is denoted by $\Ael^p(\D)$.
The $p$-integrable Teichm\"ulcer spaces defined below have been studied by Cui \cite{Cui}, Guo \cite{Guo}, Shen \cite{Sh},
Takhtajan and Teo \cite{TT},
Tang \cite{Tang} and Yanagishita \cite{Yan} among others.

\begin{definition}
A quasisymmetric homeomorphism $g:\S1 \to \S1$ belongs to $\Sym^p$ for $p \geq 2$ if
$g$ has a quasiconformal extension $\widetilde g: \D \to \D$ whose complex dilatation $\mu_{\widetilde g}$
belongs to $\Ael^p(\D)$. The {\it $p$-integrable Teichm\"ulcer space} $T^p$ is defined by
$$
T^p=\pi(\Ael^p(\D))=\Mob(\S1) \backslash \Sym^p \subset T.
$$
The topology on $T^p$ is induced by a norm $\Vert \cdot \Vert_p+\Vert \cdot \Vert_\infty$ on $\Ael^p(\D)$.
\end{definition}

We also consider the space of all $p$-integrable holomorphic quadratic automorphic forms on $\D^*$:
$$
A^p(\D^*)=\{\varphi \in B(\D^*) \mid \Vert \varphi \Vert_p^p:=\int_{\mathbb D^*} 
\rho^{2-2p}_{\D^*}(z)|\varphi(z)|^p dxdy <\infty \}.
$$
It is known that 
$A^p(\D^*) \subset B_0(\D^*)$.
It was proved in \cite[Theorem 2]{Cui} and \cite[Theorem 2]{Guo} that the Bers embedding $\beta$ of $T^p$ 
is a homeomorphism onto the image and satisfies 
$$
\beta(T^p)=\beta(T) \cap A^p(\D^*)
$$
for $p \geq 2$. This in particular implies that $T^p \subset T_0$ and hence
$\Sym^p \subset \Sym$.

For the map $\sigma:\Bel(\D) \to \Bel(\D)$ given by the barycentric extension, we have that $\sigma(\Ael^p(\D)) \subset \Ael^p(\D)$
(see \cite[Theorem 6]{Cui}, \cite[Theorem 2.1]{Tang}, \cite[Theorem 2.4]{Yan}).
Moreover, if $\nu \in \sigma(\Ael^p(\D))$, then $r_\nu$ preserves $\Ael^p(\D)$ and gives a
biholomorphic automorphism of $\Ael^p(\D)$.
Therefore, for arbitrary $\mu$ and $\nu$ in $\Ael^p(\D)$, we have that $\pi(r_\nu(\mu)) \in T^p$.
If $\tau \in T^p$ then
the base point change $R_\tau$ preserves $T^p$ and gives a
biholomorphic automorphism of $T^p$ (see \cite[Theorem 4]{Cui}, \cite[Lemma 3.4]{TT}, \cite[Proposition 5.1]{Yan}).

We state the main result in this section.

\begin{theorem}\label{A^p_B}
For each $\nu \in \Bel(\D)$, let $\psi=\Phi(\nu) \in B(\D^*)$. Then,
$$
\Phi \circ r_\nu^{-1}(\Ael^p(\D))=\beta(T) \cap \{A^p(\D^*)+\psi\}.
$$
Hence, $\beta \circ R_{\tau}^{-1}(T^p)=\beta(T) \cap \{A^p(\D^*)+\beta(\tau)\}$ for every
$\tau \in T$.
\end{theorem}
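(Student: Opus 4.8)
The plan is to prove the identity $\Phi\circ r_\nu^{-1}(\Ael^p(\D))=\beta(T)\cap\{A^p(\D^*)+\psi\}$ for a fixed $\nu$; the statement on $\beta\circ R_\tau^{-1}(T^p)$ then follows formally by applying the Teich\-m\"ul\-ler projection, since $R_\tau^{-1}\circ\pi=\pi\circ r_\nu^{-1}$ and $\beta\circ\pi=\Phi$. As $\Phi(\nu_1)=\Phi(\nu_2)$ forces $\pi(\nu_1)=\pi(\nu_2)$, the left-hand set depends only on $\psi$, so I would assume from the outset that $\nu\in\sigma(\Bel(\D))$ is obtained by the barycentric extension. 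Exactly as for $T_0$ and $B_0(\D^*)$, the two inclusions are handled by different means: $\subset$ by the area-theorem estimates of Section~\ref{3}, and $\supset$ by the decomposition of Proposition~\ref{summary}.

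\emph{Inclusion $\subset$.} For $\mu\in\Ael^p(\D)$ put $\widehat f=f_{\mu\ast\nu}\circ f_\nu^{-1}$, conformal on $\Omega^*=f_\nu(\D^*)$, and $\varphi=\Phi(\mu\ast\nu)-\psi$. The Schwarzian cocycle identity gives $\varphi=(f_\nu|_{\D^*})^{*}S_{\widehat f|_{\Omega^*}}$, and conformality of $f_\nu|_{\D^*}$ turns this into $\|\varphi\|^{p}_{A^p(\D^*)}=\int_{\Omega^*}\rho^{2-2p}_{\Omega^*}|S_{\widehat f|_{\Omega^*}}|^{p}\,dudv$. Applying Proposition~\ref{base1} with $f_{\mu\ast\nu}$ in place of $f_\mu$ and estimating the numerator of its integrand by Proposition~\ref{base2}, one bounds that integrand by $C(\|\mu\|_\infty,\|\nu\|_\infty)\,|\mu(g_\nu(w))|^{2}/|w-\zeta|^{4}$, where $g_\nu=f^\nu\circ f_\nu^{-1}\colon\Omega\to\D$ is the conformal Riemann map. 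For $p\ge2$, H\"older's inequality on the inner integral, the Koebe bound $\int_{\Omega^*}|w-\zeta|^{-4}\,dudv\le 4\pi\rho_\Omega^2(w)$, Fubini, and the conformal invariance $\int_\Omega|\mu(g_\nu(w))|^{p}\rho_\Omega^2(w)\,dudv=\|\mu\|_p^p$ then give $\|\varphi\|^{p}_{A^p(\D^*)}\le C\|\mu\|_p^p<\infty$ --- this is in essence the estimate of Yanagishita's \cite[Proposition~3.2]{Yan} --- so $\Phi(\mu\ast\nu)\in\beta(T)\cap(A^p(\D^*)+\psi)$.

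\emph{Inclusion $\supset$.} Let $\varphi\in A^p(\D^*)$ with $\varphi+\psi\in\beta(T)$. Since $A^p(\D^*)\subset B_0(\D^*)$, Proposition~\ref{basic} yields $\widehat f$, and Proposition~\ref{summary} decomposes it as $\widehat f=\widehat f_0\circ\widehat f_1$, producing $\varphi_1$ with $\|\varphi_1\|_\infty<\delta$ and Beltrami coefficients $\mu_0,\mu_1$ of $f_0,f_1\colon\D\to\D$. One first checks $\varphi_1\in A^p(\D^*)$: by Proposition~\ref{summary}(3), $\varphi-\varphi_1=\Phi(\mu_0\ast\mu_1\ast\nu)-\Phi(\mu_1\ast\nu)$, and since $\mu_0$ is compactly supported in $\D$ the coefficients $\mu_0\ast\mu_1\ast\nu$ and $\mu_1\ast\nu$ agree off a compact subset of $\D$; hence their $\Phi$-images differ by the Schwarzian, pulled back by $f_{\mu_1\ast\nu}|_{\D^*}$, of a map conformal off a compact subset of $f_{\mu_1\ast\nu}(\D)$, which is holomorphic and bounded near the closure of the exterior quasidisk and is $O(\zeta^{-4})$ at $\infty$, hence lies in $A^p$. (Alternatively, apply the inclusion~$\subset$ with base point $\mu_1\ast\nu$.) Also $\mu_0\in\Ael^p(\D)$ trivially, and the chain rule shows $\mu:=\mu_0\ast\mu_1$ agrees with $\mu_1$ off a compact subset of $\D$; so once $\mu_1\in\Ael^p(\D)$ is established, $\mu\in\Ael^p(\D)$ and $\Phi(r_\nu^{-1}(\mu))=\Phi(\mu\ast\nu)=\varphi+\psi$ by the construction of Section~\ref{3}, completing the inclusion.

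\emph{The step I expect to be the real obstacle is $\mu_1\in\Ael^p(\D)$.} As $g_\nu$ is conformal, $\|\mu_1\|_p^p=\int_\Omega|\widehat\mu_1(w)|^{p}\rho_\Omega^2(w)\,dudv$. Written intrinsically through the conformal map $f_\nu|_{\D^*}\colon\D^*\to\Omega^*$ and the push-forward $S_{\widehat f_1|_{\Omega^*}}=(f_\nu|_{\D^*})_{*}\varphi_1$, the reflection estimate of Proposition~\ref{summary}(1) reads $|\widehat\mu_1(w)|\le\delta^{-1}\rho_{\Omega^*}^{-2}(\lambda(w))\,|S_{\widehat f_1|_{\Omega^*}}(\lambda(w))|$ for $w\in\Omega$, with $\lambda(w)=f_\nu\bigl((f_\nu^{-1}(w))^{*}\bigr)\in\Omega^*$. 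Here the key fact is that $f_\nu|_\D$ distorts the hyperbolic area element of $\D$ by a factor bounded in terms of $\|\nu\|_\infty$ --- this is exactly where $\nu$ being a barycentric extension enters, via the distortion estimates for that extension underlying the complex structure on $T^p$ (\cite{Cui}, \cite{Tang}, \cite{Yan}) --- so $\lambda$ pulls $\rho_{\Omega^*}^2\,dudv$ back to a measure comparable with $\rho_\Omega^2\,dudv$. Changing variables $\omega=\lambda(w)$ then yields $\|\mu_1\|_p^p\le C\int_{\Omega^*}\rho_{\Omega^*}^{2-2p}(\omega)|S_{\widehat f_1|_{\Omega^*}}(\omega)|^{p}\,dudv=C\|\varphi_1\|_{A^p(\D^*)}^p<\infty$. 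The inclusion~$\subset$ does not meet this difficulty, because there every change of variables is conformal; it is precisely the quasiconformal twist by $f_\nu$ in $\supset$ that forces control of the area distortion of the barycentric extension.
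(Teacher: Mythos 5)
Your proposal is correct and follows essentially the same route as the paper: the inclusion $\subset$ via the area-theorem estimates of Propositions \ref{base1} and \ref{base2} (the paper factors this through two intermediate claims from \cite{Mat7}, equivalent to your unrolled Yanagishita-type computation), and the inclusion $\supset$ via Propositions \ref{basic} and \ref{summary}, with $\varphi_1 \in A^p(\D^*)$ obtained exactly by your ``alternative'' argument and $\mu_1 \in \Ael^p(\D)$ obtained from the reflection bound of property (1) together with the Douady--Earle Jacobian estimate $\rho_{\D}^2(f^\nu(z))J_{f^\nu}(z) \leq C\rho_{\D}^2(z)$. You correctly identified the latter as the crux and the precise point where $\nu \in \sigma(\Bel(\D))$ is needed; the paper merely organizes that step as two successive changes of variables (the anticonformal reflection $z \mapsto z^*$ on $\D$, then $\zeta = f^\nu(z)$) rather than your single map $\lambda$.
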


\begin{proof}
The inclusion $\subset$ in the case of $p=2$
was shown by Takhtajan and Teo \cite[Theorem 2.13]{TT}.
For a general $p \geq 2$, we can prove this by using the following claims 
based on Propositions \ref{base1} and \ref{base2}. They were
shown in 
Lemma 6.4 and Proposition 7.1 of \cite{Mat7} and the remarks after them, respectively.
We note that the inequalities below include the case where the right-hand side is $\infty$.

\smallskip
\noindent
{\it Claim 1.}
Let $\mu \in \Bel(\D)$ be arbitrary and let $\mu' \in \sigma(\Bel(\D))$ 
be obtained by the barycentric extension.
Then,
$$
\Vert \Phi(\mu)-\Phi(\mu') \Vert_p \leq \frac{C_1 \Vert \mu-\mu' \Vert_p}{\sqrt{(1-\Vert \mu \Vert_\infty^2)(1-\Vert \mu' \Vert_\infty^2)}},
$$
where $C_1>0$ is a constant depending only on $\Vert \mu' \Vert_\infty$.

\medskip
\noindent
{\it Claim 2.}
For $\mu_1, \mu_2 \in \Bel(\D)$ and $\nu^{-1} \in \sigma(\Bel(\D))$, we have that 
$$
\Vert r_\nu(\mu_1)-r_\nu(\mu_2) \Vert_p \leq C_2 \Vert \mu_1-\mu_2 \Vert_{p},
$$
where $C_2>0$ is a constant depending only on $\Vert \nu \Vert_\infty$,
$\Vert \mu_1 \Vert_\infty$, and $\Vert \mu_2 \Vert_\infty$.
\smallskip

For any $\mu \in \Ael^p(\D)$ and $\nu \in \sigma(\Bel(\D))$, we have only to show that $\Phi(r_\nu^{-1}(\mu))-\Phi(\nu) \in A^p(\D^*)$.
By Claim 2 applied to the right translation $r_{\nu^{-1}}=r_{\nu}^{-1}$, we have that
$$
\Vert r_\nu^{-1}(\mu)-\nu \Vert_p=\Vert r_\nu^{-1}(\mu)-r_\nu^{-1}(0) \Vert_p \leq C_2 \Vert \mu \Vert_p<\infty.
$$ 
Then, Claim 1 yields that
$$
\Vert \Phi(r_\nu^{-1}(\mu))-\Phi(\nu) \Vert_p \leq \frac{C_1\Vert r_\nu^{-1}(\mu)-\nu \Vert_p}{\sqrt{(1-\Vert r_\nu^{-1}(\mu) \Vert_\infty^2)(1-\Vert \nu \Vert_\infty^2)}}<\infty.
$$
This proves the inclusion $\subset$.

For the other inclusion $\supset$,
we take $\varphi \in A^p(\D^*)$ such that $\varphi +\psi \in \beta(T)$. Since $A^p(\D^*) \subset B_0(\D^*)$,
Proposition \ref{basic} asserts that
there is
a quasiconformal homeomorphism $\widehat f:\Chat \to \Chat$ conformal on $\Omega^*$
and asymptotically conformal on $\Omega$ such that $S_{\widehat f \circ f_\nu|_{\D^*}}=\varphi +\psi$.
According to Proposition \ref{summary}, we consider the decomposition $\widehat f=\widehat f_0 \circ \widehat f_1$
together with other maps that appear in this proposition. We use the properties shown in this proposition.

Since $\varphi \in A^p(\D^*)$, if $\varphi-\varphi_1 \in A^p(\D^*)$, then $\varphi_1 \in A^p(\D^*)$.
By property (2), $\mu_0$ in particular belongs to $\Ael^p(\D)$, and then by property (3) and 
the consequence from Claims 1 and 2 just proved above, we have that $\varphi-\varphi_1 \in A^p(\D^*)$.
Hence, $\varphi_1 \in A^p(\D^*)$.

We estimate the $p$-norm of $\widehat \mu_1 \circ f_\nu$. Property (1) yields that
$$
\int_{\D} |\widehat \mu_1 \circ f_\nu(z)|^p \rho_{\D}^2(z) dxdy
\leq \frac{1}{\delta} \int_{\D} (\rho_{\D^*}^{-2}(z^*)|\varphi_1(z^*)|)^p \rho^2_{\D}(z)dxdy.
$$
Here, we change the variables from $z=x+iy$ to $z^*=(\bar z)^{-1}=x^*+iy^*$. 
By $\rho_{\D}^2(z)dxdy=\rho_{\D^*}^2(z^*)dx^* dy^*$, the last integral is equal to
$$
\int_{\D^*} |\varphi_1(z^*)|^p \rho^{2-2p}_{\D^*}(z^*)dx^*dy^*,
$$
which is finite by $\varphi_1 \in A^p(\D^*)$. Hence, $\widehat \mu_1 \circ f_\nu \in \Ael^p(\D)$.

We will show that the complex dilatation $\mu_1$ of $f_1:\D \to \D$
belongs to $\Ael^p(\D)$. Since $|\widehat \mu_1 \circ f_\nu|=|\mu_1 \circ f^\nu|$,
we have that $\mu_1 \circ f^\nu \in \Ael^p(\D)$.
Then,
$$
\int_{\D}|\mu_1(\zeta)|^p \rho_{\D}^2(\zeta) d\xi d\eta
=\int_{\D}|\mu_1(f^\nu(z))|^p \rho_{\D}^2(f^\nu(z)) J_{f^\nu}(z)dx dy
$$
for $\zeta=f^\nu(z)$.
We may assume that $f^\nu$ is obtained by the barycentric extension. In this case,
the Jacobian $J_{f^\nu}$ is estimated as
$$
\rho_{\D}^2(f^\nu(z)) J_{f^\nu}(z) \leq C \rho_{\D}^2(z),
$$
where $C>0$ is a constant depending only on $\Vert \nu \Vert_\infty$ (\cite[Theorem 2]{DE}).
Thus, we see that the above integral is finite.

By property (2), the support of the complex dilatation $\mu_0$ of $f_0:\D \to \D$
is contained in a compact subset of $\D$.
Hence, we see that the complex dilatation $\mu_f=\mu_0 \ast \mu_1$ of $f=f_0 \circ f_1$
belongs to $\Ael^p(\D)$.
Since the complex dilatation on $\D$ of the quasiconformal homeomorphism $\widehat f \circ f_\nu$  
is $r_\nu^{-1}(\mu_f)$, we have that 
$$
\varphi +\psi=\Phi(r_\nu^{-1}(\mu_f)) \in \Phi \circ r_\nu^{-1}(\Ael^p(\D)),
$$
which shows the inclusion $\supset$.
\end{proof}

The above result can be generalized to the $p$-integrable Teich\-m\"ul\-ler space $T^p(\D/\Gamma)$ of
a Riemann surface $\D/\Gamma$
for a Fuchsian group $\Gamma$ 
whose hyperbolic elements have translation lengths uniformly bounded away from $0$.
We say that such a Fuchsian group $\Gamma$ satisfies the {\it Lehner condition}.
This Teich\-m\"ul\-ler space $T^p(\D/\Gamma)$ was introduced by Yanagishita \cite{Yan}. 

A Beltrami coefficient $\mu \in \Bel(\D)$ is $\Gamma$-invariant if
$$
(\gamma^* \mu)(z):=\mu(\gamma(z))\frac{\overline{\gamma'(z)}}{\gamma'(z)}=\mu(z) \quad ({\rm a.e.}\ z \in \D)
$$
for every $\gamma \in \Gamma$. Such a $\mu$ projects down to 
a Beltrami differential on the Riemann surface $\D/\Gamma$, and the space of all 
$\Gamma$-invariant Beltrami coefficients $\mu \in \Bel(\D)$ is
denoted by $\Bel(\D/\Gamma)$.
We define the space of all $\Gamma$-invariant $p$-integrable Beltrami coefficients on $\D$ as
$$
\Ael^p(\D/\Gamma)=\{\mu \in \Bel(\D/\Gamma) \mid 
\Vert \mu \Vert_p^p:=\int_{\D/\Gamma} |\mu(z)|^p \rho^2_{\D}(z)dxdy<\infty\}.
$$

A holomorphic quadratic automorphic form $\varphi \in B(\D^*)$ is $\Gamma$-invariant if
$$
(\gamma^* \varphi)(z):=\varphi(\gamma(z))\gamma'(z)^2=\varphi(z) \quad (\forall z \in \D)
$$
for every $\gamma \in \Gamma$. Such a $\varphi$ projects down to 
a holomorphic quadratic differentials on the Riemann surface $\D^*\!/\Gamma$, and the Banach space of all 
$\Gamma$-invariant bounded holomorphic quadratic automorphic forms $\varphi \in B(\D^*)$ is
denoted by $B(\D^*\!/\Gamma)$.
We define the Banach space of all $\Gamma$-invariant $p$-integrable holomorphic quadratic automorphic forms on $\D^*$ as
$$
A^p(\D^*\!/\Gamma)=\{\varphi \in B(\D^*\!/\Gamma) \mid 
\Vert \varphi \Vert_p^p:=\int_{\D^*/\Gamma} |\varphi(z)|^p \rho^{2-2p}_{\D^*}(z)dxdy<\infty\}.
$$

We apply the Teich\-m\"ul\-ler projection $\pi:\Bel(\D) \to T$ and 
the Bers projection $\Phi:\Bel(\D) \to B(\D^*)$ to $\Bel(\D/\Gamma)$ and $\Ael^p(\D/\Gamma)$, respectively.
The Teich\-m\"ul\-ler space $T(\D/\Gamma)$ of the Riemann surface $\D/\Gamma$ can be defined as
$T(\D/\Gamma)=\pi(\Bel(\D/\Gamma))$. The topology on $T(\D/\Gamma)$ is induced by the norm $\Vert \cdot \Vert_\infty$ on $\Bel(\D/\Gamma)$.
It is well-known (see \cite{Leh}) that the Bers embedding
$\beta$ restricted to $T(\D/\Gamma)$ is a homeomorphism onto the image
$$
\beta(T(\D/\Gamma))=\Phi(\Bel(\D/\Gamma))=\beta(T) \cap B(\D^*\!/\Gamma),
$$ 
which is a bounded domain in $B(\D^*\!/\Gamma)$.
By the conformal naturality of the barycentric extension, we have $\sigma(\Bel(\D/\Gamma)) \subset \Bel(\D/\Gamma)$.

The $p$-integrable Teich\-m\"ul\-ler space $T^p(\D/\Gamma)$ is defined by
$T^p(\D/\Gamma)=\pi(\Ael^p(\D/\Gamma))$. 
The topology on $T^p(\D/\Gamma)$ is induced by the norm $\Vert \cdot \Vert_p+\Vert \cdot \Vert_\infty$ on $\Ael^p(\D/\Gamma)$.
It was shown in \cite{Yan} that
$\beta$ restricted to $T^p(\D/\Gamma)$ is a homeomorphism onto the image
$$
\beta(T^p(\D/\Gamma))=\Phi(\Ael^p(\D/\Gamma))=\beta(T) \cap A^p(\D^*\!/\Gamma),
$$
which is a domain in $A^p(\D^*\!/\Gamma)$.
The properties of $\sigma:\Ael^p(\D/\Gamma) \to \Ael^p(\D/\Gamma)$ induced by the barycentric extension,
the right translation $r_\nu:\Ael^p(\D/\Gamma) \to \Ael^p(\D/\Gamma)$, and the base point change 
$R_\tau:T^p(\D/\Gamma) \to T^p(\D/\Gamma)$ is the same as
in the case where $\Gamma$ is trivial, which are all given in \cite{Yan}.

The $\Gamma$-invariant version of the previous theorem is as follows.

\begin{theorem}\label{A^p_B-group}
We assume that a Fuchsian group $\Gamma$ satisfies the Lehner condition.
For each $\nu \in \Bel(\D/\Gamma)$, let $\psi=\Phi(\nu) \in B(\D^*\!/\Gamma)$. Then,
$$
\Phi \circ r_\nu^{-1}(\Ael^p(\D/\Gamma))=\beta(T(\D/\Gamma)) \cap \{A^p(\D^*\!/\Gamma)+\psi\}.
$$
Hence, $\beta \circ R_{\tau}^{-1}(T^p(\D/\Gamma))=\beta(T(\D/\Gamma)) \cap \{A^p(\D^*\!/\Gamma)+\beta(\tau)\}$ for every
$\tau \in T(\D/\Gamma)$.
\end{theorem}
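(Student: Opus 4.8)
The plan is to transcribe the proof of Theorem \ref{A^p_B}, replacing $\D$ and $\D^*$ throughout by fundamental domains for $\Gamma$ and for the relevant quasiconformally conjugated groups, and working with $\Gamma$-invariant Beltrami coefficients and quadratic differentials. Two facts quoted above make this possible: the conformal naturality of the barycentric extension and of the Earle--Nag section of Lemma \ref{reflection}, which keeps every auxiliary homeomorphism equivariant; and, under the Lehner condition, Yanagishita's theory of $T^p(\D/\Gamma)$, which supplies $A^p(\D^*\!/\Gamma)\subset B_0(\D^*\!/\Gamma)$, the identity $\beta(T^p(\D/\Gamma))=\beta(T(\D/\Gamma))\cap A^p(\D^*\!/\Gamma)$, and the good behaviour of $\sigma$, $r_\nu$ and $R_\tau$ on the $\Gamma$-invariant integrable spaces. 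As in the proof of Theorem \ref{A^p_B}, one reduces at once to $\nu\in\sigma(\Bel(\D/\Gamma))$.

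For the inclusion $\subset$ I would first record the $\Gamma$-invariant forms of Claims 1 and 2. In both, $\mu,\mu',\nu$ are $\Gamma$-invariant, hence so are $\Phi(\mu)-\Phi(\mu')$ and $r_\nu(\mu_1)-r_\nu(\mu_2)$; the pointwise estimates of Propositions \ref{base1} and \ref{base2} may therefore be integrated over a fundamental domain in place of all of $\D$, the only change of variables involved, $\zeta=f^\nu(z)$, being absorbed by the Douady--Earle Jacobian bound $\rho_\D^2(f^\nu(z))J_{f^\nu}(z)\le C\rho_\D^2(z)$ for the barycentrically extended $f^\nu$. This gives the inequalities of Claims 1 and 2 with $p$-norms over $\D/\Gamma$. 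Then, verbatim as in the proof of Theorem \ref{A^p_B}, for $\mu\in\Ael^p(\D/\Gamma)$ one gets $\Vert r_\nu^{-1}(\mu)-\nu\Vert_p<\infty$ from Claim 2 and then $\Vert\Phi(r_\nu^{-1}(\mu))-\psi\Vert_p<\infty$ from Claim 1, so $\Phi(r_\nu^{-1}(\mu))-\psi\in A^p(\D^*\!/\Gamma)$.

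For the reverse inclusion $\supset$, take $\varphi\in A^p(\D^*\!/\Gamma)$ with $\varphi+\psi\in\beta(T(\D/\Gamma))$. Since $A^p(\D^*\!/\Gamma)\subset B_0(\D^*\!/\Gamma)$ and Theorem \ref{B_0-B} holds $\Gamma$-equivariantly (the barycentric-extension argument mentioned after it, cf.\ \cite{EMS}), Proposition \ref{basic} furnishes $\widehat f$, conformal on $\Omega^*=f_\nu(\D^*)$ and asymptotically conformal on $\Omega=f_\nu(\D)$, with $S_{\widehat f\circ f_\nu|_{\D^*}}=\varphi+\psi$, and chosen so as to intertwine $\Gamma$ with the relevant conjugate groups. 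Running the decomposition $\widehat f=\widehat f_0\circ\widehat f_1$ of Proposition \ref{summary} equivariantly --- with $\widehat f_1$ taken from the conformally natural section of Lemma \ref{reflection}, and the cutoff region $\Omega_0$ chosen invariant under the conjugate group on $\Omega$ and with relatively compact image in the quotient (possible because the asymptotically conformal dilatation is uniformly small near the ideal boundary and, by the Lehner condition, the quotient has no cusps) --- one obtains, exactly as in the proof of Theorem \ref{A^p_B}: from property (2), that $\mu_0$ is bounded and its support has relatively compact image in $\D/\Gamma$, whence $\mu_0\in\Ael^p(\D/\Gamma)$; from property (3) and the $\Gamma$-forms of Claims 1 and 2, that $\varphi-\varphi_1\in A^p(\D^*\!/\Gamma)$, whence $\varphi_1\in A^p(\D^*\!/\Gamma)$; from property (1) and the measure-preserving substitution $z\mapsto z^*$, that $\widehat\mu_1\circ f_\nu\in\Ael^p(\D/\Gamma)$, and then $\mu_1\in\Ael^p(\D/\Gamma)$ after the change of variables $\zeta=f^\nu(z)$ controlled by the same Jacobian bound. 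Hence $\mu_f=\mu_0\ast\mu_1\in\Ael^p(\D/\Gamma)$, and since $r_\nu^{-1}(\mu_f)$ is the complex dilatation on $\D$ of $\widehat f\circ f_\nu$ we get $\varphi+\psi=\Phi(r_\nu^{-1}(\mu_f))\in\Phi\circ r_\nu^{-1}(\Ael^p(\D/\Gamma))$. The last assertion follows by putting $\tau=\pi(\nu)$.

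The main obstacle is the equivariant version of the cut-and-decompose argument of Section \ref{3}: one must check that $f^\nu$, $\widehat f_1$, $\widehat f_0$ and all intermediate Riemann maps intertwine the relevant Fuchsian or quasiconformally conjugated groups, and --- the genuinely new point --- that the compactly supported factor $\mu_0$ can be produced from a \emph{group-invariant} cutoff region whose image in the quotient is relatively compact. This is precisely where the Lehner condition is indispensable: without a positive lower bound on the translation lengths of hyperbolic elements the quotient surface may carry cusps, the asymptotically conformal dilatation need not be uniformly small off a cocompact part, and Lehner's inclusion $A^p(\D^*\!/\Gamma)\subset B(\D^*\!/\Gamma)$ --- used to place $\varphi$ in $B_0(\D^*\!/\Gamma)$ --- may fail. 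Granting the quoted equivariant facts from \cite{Yan} and \cite{EMS}, everything else is a routine rerun of the proof of Theorem \ref{A^p_B}.
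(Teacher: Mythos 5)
Your proposal follows the paper's proof exactly: the paper likewise reduces Theorem \ref{A^p_B-group} to rerunning the proof of Theorem \ref{A^p_B}, citing precisely the four $\Gamma$-compatible ingredients you identify (the Earle--Markovic--Saric version of Theorem \ref{B_0-B}, the Earle--Nag conformally natural reflection behind Lemma \ref{reflection} and Proposition \ref{summary}, and Yanagishita's Propositions 3.2 and 5.1 supplying the $\Gamma$-invariant forms of Claims 1 and 2). One parenthetical remark is inaccurate --- the Lehner condition as stated only bounds translation lengths of hyperbolic elements and does not exclude cusps; the group-invariant cutoff region exists simply because ``vanishing at the boundary'' is formulated relative to the quotient surface in the equivariant setting --- but this side remark does not affect the argument.
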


\begin{proof}
We only refer to the points where the compatibility with $\Gamma$ is required in addition to the proof of Theorem \ref{A^p_B}.
The other places are similarly carried out by the previous arguments.
(1) The corresponding statement to Theorem \ref{B_0-B} is valid for $T(\D/\Gamma)$ and $T_0(\D/\Gamma)$
by Earle, Markovic and Saric \cite[Theorem 4]{EMS}.
(2) Lemma \ref{reflection} and Proposition \ref{summary} respecting the compatibility with $\Gamma$ are already valid 
as they are since the Earle--Nag quasiconformal reflection is used for constructing a local section
of the Bers projection (see \cite[Section 14.3]{GL}).
(3) The corresponding statement to Claim 1 can be obtained by also using the proof of \cite[Proposition 3.2]{Yan}.
(4) The corresponding statement to Claim 2 can be obtained by also using the proof of \cite[Proposition 5.1]{Yan}.
\end{proof}

Theorem \ref{A^p_B-group} in particular implies that $\beta(T(\D/\Gamma)) \cap \{A^p(\D^*\!/\Gamma)+\beta(\tau)\}$
is connected for each $\tau \in T(\D/\Gamma)$, and any such components are biholomorphically  equivalent to
each other, which admits the Banach manifold structure.

We consider the quotient spaces with the projections $T(\D/\Gamma) \to T^p(\D/\Gamma) \backslash T(\D/\Gamma)$ and
$B(\D^*\!/\Gamma) \to A^p(\D^*\!/\Gamma) \backslash B(\D^*\!/\Gamma)$. These maps are continuous and open
with respect to the quotient topology. Then, we see that the projection of the Bers embedding $\beta$ is not only
well-defined to be an injection but also a homeomorphism onto the image.

\begin{corollary}
The quotient Bers embedding 
$$
\widehat \beta^p:T^p(\D/\Gamma) \backslash T(\D/\Gamma) \to A^p(\D^*\!/\Gamma) \backslash B(\D^*\!/\Gamma)
$$ 
is well-defined to be a homeomorphism onto the image
for any Fuchsian group $\Gamma$ satisfying the Lehner condition.
\end{corollary}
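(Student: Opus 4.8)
The plan is to obtain the corollary purely formally from Theorem~\ref{A^p_B-group}, so that no new analysis is needed. Recall that the sets $R_\tau^{-1}(T^p(\D/\Gamma))$, as $[\tau]$ runs over $T^p(\D/\Gamma)\backslash T(\D/\Gamma)$, are exactly the $T^p(\D/\Gamma)$-cosets partitioning $T(\D/\Gamma)$ (with $\tau\in R_\tau^{-1}(T^p(\D/\Gamma))$), while the affine subspaces $A^p(\D^*\!/\Gamma)+\psi$ partition $B(\D^*\!/\Gamma)$; moreover $\beta$ is a homeomorphism of $T(\D/\Gamma)$ onto the bounded domain $\beta(T(\D/\Gamma))\subset B(\D^*\!/\Gamma)$. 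In these terms Theorem~\ref{A^p_B-group} asserts precisely that $\beta$ carries each coset onto the trace on $\beta(T(\D/\Gamma))$ of a single affine slice, namely $\beta\bigl(R_\tau^{-1}(T^p(\D/\Gamma))\bigr)=\beta(T(\D/\Gamma))\cap\{A^p(\D^*\!/\Gamma)+\beta(\tau)\}$.

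First I would check that $\widehat\beta^p([\tau]):=[\beta(\tau)]$ is well defined. If $\tau_1,\tau_2$ represent the same coset, then $\tau_1\in R_{\tau_2}^{-1}(T^p(\D/\Gamma))$, so the inclusion $\subset$ of Theorem~\ref{A^p_B-group} gives $\beta(\tau_1)\in A^p(\D^*\!/\Gamma)+\beta(\tau_2)$, i.e.\ $[\beta(\tau_1)]=[\beta(\tau_2)]$ in $A^p(\D^*\!/\Gamma)\backslash B(\D^*\!/\Gamma)$; this half is already the classical compatibility in the style of Theorem~\ref{B_0-B}. For injectivity, suppose $\widehat\beta^p([\tau_1])=\widehat\beta^p([\tau_2])$, that is $\beta(\tau_1)\in A^p(\D^*\!/\Gamma)+\beta(\tau_2)$; since also $\beta(\tau_1)\in\beta(T(\D/\Gamma))$, the \emph{converse} inclusion $\supset$ of Theorem~\ref{A^p_B-group} puts $\beta(\tau_1)$ into $\beta\bigl(R_{\tau_2}^{-1}(T^p(\D/\Gamma))\bigr)$, whence $\tau_1\in R_{\tau_2}^{-1}(T^p(\D/\Gamma))$ because $\beta$ is injective on $T(\D/\Gamma)$, i.e.\ $[\tau_1]=[\tau_2]$. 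This single step is where the new content $\supset$ of Theorem~\ref{A^p_B-group} is genuinely used; everything else follows from the classical part alone.

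It remains to see that $\widehat\beta^p$ is a homeomorphism onto its image. Let $P_T$ and $P_B$ denote the canonical projections onto $T^p(\D/\Gamma)\backslash T(\D/\Gamma)$ and $A^p(\D^*\!/\Gamma)\backslash B(\D^*\!/\Gamma)$; both are continuous and open for the quotient topologies, and $\widehat\beta^p\circ P_T=P_B\circ\beta$ by construction. Continuity of $\widehat\beta^p$ is then immediate, since $P_T$ is a quotient map and $P_B\circ\beta$ is continuous. For the inverse, let $U$ be open in $T^p(\D/\Gamma)\backslash T(\D/\Gamma)$; then $P_T^{-1}(U)$ is open in $T(\D/\Gamma)$, hence $\beta(P_T^{-1}(U))$ is open in $\beta(T(\D/\Gamma))$ and therefore open in $B(\D^*\!/\Gamma)$ because $\beta(T(\D/\Gamma))$ is itself open there; applying the open map $P_B$, the set $\widehat\beta^p(U)=P_B\bigl(\beta(P_T^{-1}(U))\bigr)$ is open, so $\widehat\beta^p$ is open onto its image. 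Thus $\widehat\beta^p$ is a continuous open bijection onto its image. I expect no real obstacle in the corollary itself: all the analytic work is already absorbed into Theorem~\ref{A^p_B-group} (ultimately Claims~1 and~2 together with Proposition~\ref{summary} and the Jacobian estimate for the barycentric extension), and the only points to keep straight are which inclusion yields well-definedness and which yields injectivity, together with the elementary but essential remark that $\beta(T(\D/\Gamma))$ is open in $B(\D^*\!/\Gamma)$, which is what lets $\beta$ and the projections $P_T,P_B$ compose to an open map.
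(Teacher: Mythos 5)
Your proposal is correct and follows essentially the same route as the paper, which deduces the corollary directly from Theorem \ref{A^p_B-group}: the inclusion $\subset$ gives well-definedness, the converse inclusion $\supset$ together with injectivity of $\beta$ on $T(\D/\Gamma)$ gives injectivity, and the continuity and openness of the two canonical projections combined with $\beta$ being a homeomorphism onto an open subset of $B(\D^*\!/\Gamma)$ give the homeomorphism onto the image. Your write-up is in fact more detailed than the paper's, which compresses all of this into the remark immediately preceding the corollary.
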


\section{The Teich\-m\"ul\-ler space of circle diffeomorphisms of H\"older continuous derivative}\label{5}

In this section, we prove the affine foliated structure of the
universal Teich\-m\"ul\-ler space $T$ 
induced by the Teich\-m\"ul\-ler space $T_0^{>0}$ of circle diffeomorphisms of H\"older continuous derivative.

For a constant $\alpha \in (0,1)$,
we denote by $\Diff_+^{1+\alpha}(\S1)$ the group of all orientation-preserving diffeomorphisms $g$ of 
the unit circle $\S1$ whose derivatives are $\alpha$-H\"older continuous;
there is a constant $c \geq 0$ such that
$$
|g'(x)-g'(y)| \leq c|x-y|^\alpha
$$
for any $x,y \in \S1=\R/\mathbb Z$.
We give a characterization of $\Diff^{1+\alpha}(\S1)$ 
analogously to Proposition \ref{gs} by considering the following spaces:
\begin{align*}
& T^\alpha_0=\Mob(\S1) \backslash \Diff^{1+\alpha}_+(\S1);\\
& \Bel^{\alpha}_0(\D)=\{\mu \in \Bel_0(\D) \mid \Vert \mu \Vert_{\infty,\alpha}:=
\esssup_{z \in \D} \rho^{\alpha}_{\D}(z)|\mu(z)|<\infty\};\\
& B^{{ \alpha}}_0(\D^*)
=\{\varphi \in B_0(\D^*) \mid \Vert \varphi \Vert_{\infty,\alpha}:=\sup_{z \in \mathbb D^*} 
\rho^{-2+{ \alpha}}_{\D^*}(z)|\varphi(z)|<\infty \}.
\end{align*}
We define $T^\alpha_0$ to be the {\it Teich\-m\"ul\-ler space of circle diffeomorphisms} 
of $\alpha$-H\"older continuous derivatives (\cite{Mat0, Mat2, Mat3, Mat4}).

\begin{proposition}\label{main}
For a quasisymmetric homeomorphism $g \in \QS$, the following conditions are equivalent:
$(1)$ $g \in \Diff^{1+\alpha}_+(\S1)$; $(2)$ $s([g]) \in \Bel_0^\alpha(\D)$; $(3)$ $\beta([g]) \in B_0^\alpha(\D^*)$.
\end{proposition}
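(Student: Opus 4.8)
The plan is to follow the template of Proposition \ref{gs}, establishing the cycle of implications $(1)\Rightarrow(3)\Rightarrow(2)\Rightarrow(1)$ (or a convenient rearrangement), and to reduce each step to a precise estimate relating the three weighted norms $\|\cdot\|_{\infty,\alpha}$ on Beltrami coefficients, the sup-norm with weight $\rho_{\D^*}^{-2+\alpha}$ on quadratic differentials, and the H\"older modulus of $g'$ on $\S1$. The key technical input is that the weights are matched: a Beltrami coefficient decaying like $\rho_\D^{-\alpha}(z)=(d(z,\S1))^{\alpha}$ near the boundary should, via the Bers projection, produce a Schwarzian decaying like $\rho_{\D^*}^{2-\alpha}(z^*)$, and conversely; and both should correspond to $g'$ being $\alpha$-H\"older. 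This is the same matching that underlies the Banach-space pairing in Sections \ref{2}--\ref{3}, specialized to the diagonal weight.

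For $(2)\Rightarrow(3)$, I would start from $\mu=s([g])\in\Bel_0^\alpha(\D)$, which is moreover obtained by the barycentric extension, and estimate $\beta([g])=\Phi(\mu)$ directly. The natural tool is Proposition \ref{base1} with $\nu=0$: it gives
$$
\rho_{\D^*}^{-2}(z)\,|S_{f_\mu|_{\D^*}}(z)|
\;\le\;\frac{3}{\sqrt\pi}\left(\int_{\D}\frac{|\mu(w)|^2}{1-|\mu(w)|^2}\,\frac{du\,dv}{|w-z|^4}\right)^{1/2}.
$$
Inserting the bound $|\mu(w)|\le \|\mu\|_{\infty,\alpha}\,\rho_\D^{-\alpha}(w)$ and carrying out the standard kernel estimate (the integral $\int_\D \rho_\D^{-2\alpha}(w)|w-z|^{-4}\,du\,dv$ behaves like $\rho_{\D^*}^{2-2\alpha}(z)$ up to a constant, for $z\in\D^*$ near $\S1$, since $1-|z|^{-2}\sim d(z,\S1)$) yields $\rho_{\D^*}^{-2+\alpha}(z)|S_{f_\mu}(z)|\lesssim \|\mu\|_{\infty,\alpha}$, i.e.\ $\beta([g])\in B_0^\alpha(\D^*)$. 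That $\beta([g])\in B_0(\D^*)$ already follows from Proposition \ref{gs}, so only the refined weighted bound is new here.

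For $(3)\Rightarrow(2)$, I would invoke the Ahlfors--Weill--type local section of Lemma \ref{reflection} with $\nu=0$ (so $\psi=0$, $\Omega=\D$): given $\varphi=\beta([g])$ of sufficiently small norm (which one can arrange by first composing with a M\"obius map or by a standard rescaling/chain-rule argument, or by applying the section after a base-point change as in Section \ref{3}), the produced reflection Beltrami coefficient $\widehat\mu$ satisfies $|\widehat\mu(z)|\le \delta^{-1}\rho_{\D^*}^{-2}(z^*)|\varphi(z^*)|$. Combining this with $|\varphi(z^*)|\le\|\varphi\|_{\infty,\alpha}\rho_{\D^*}^{2-\alpha}(z^*)$ and the identity $\rho_{\D^*}(z^*)=\rho_\D(z)$ gives $|\widehat\mu(z)|\lesssim\|\varphi\|_{\infty,\alpha}\rho_\D^{-\alpha}(z)$, hence $s([g])\in\Bel_0^\alpha(\D)$ (the barycentric section and the Ahlfors--Weill section differ by a self-map fixing the boundary, and one checks the $\|\cdot\|_{\infty,\alpha}$ class is preserved, e.g.\ via Claim~2-type estimates or directly). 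Finally $(2)\Leftrightarrow(1)$ is the genuinely geometric equivalence: a symmetric homeomorphism whose barycentric (or any nice) extension has Beltrami coefficient bounded by $\rho_\D^{-\alpha}$ is exactly a $C^{1+\alpha}$ circle diffeomorphism. The hard part will be this last equivalence, $(2)\Leftrightarrow(1)$: proving that the boundary decay rate of the complex dilatation transfers to H\"older regularity of the boundary derivative requires an argument beyond the formal norm bookkeeping — one direction uses Mori-type distortion estimates for quasiconformal maps with dilatation vanishing at a controlled rate, and the other uses that a $C^{1+\alpha}$ diffeomorphism admits an explicit extension (e.g.\ a variant of the Beurling--Ahlfors or a foliated extension) with dilatation $O(\rho_\D^{-\alpha})$. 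I expect to cite \cite{Mat0, Mat3} for the precise form of this equivalence, since it is the content singled out as the defining property of $T_0^\alpha$.
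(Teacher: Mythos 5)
The first thing to say is that the paper itself offers no proof of Proposition \ref{main}: it is stated as a known characterization imported from \cite{Mat0, Mat2, Mat3}, exactly as Proposition \ref{gs} is imported from \cite{GS}, so there is no in-paper argument to compare yours against. On its own terms, the shape of your outline is right: reduce $(2)\Leftrightarrow(3)$ to weighted-norm estimates via Proposition \ref{base1} and Lemma \ref{reflection}, and isolate $(1)\Leftrightarrow(2)$ as the genuinely geometric step. Your kernel computation for $(2)\Rightarrow(3)$ does land on the correct exponent, but note that you have misquoted Proposition \ref{base1}: the correct inequality bounds $\rho_{\Omega^*}^{-1}(\zeta)\,|S(\zeta)|$, not $\rho_{\Omega^*}^{-2}(\zeta)\,|S(\zeta)|$, by the square root of the integral. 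With the inequality as you display it the exponents do not match and you would get $\rho_{\D^*}^{-3+\alpha}|S|\lesssim 1$; the conclusion you state only follows from the correct form.

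There are three more substantive gaps. First, in $(3)\Rightarrow(2)$ you cannot ``arrange small norm by composing with a M\"obius map'': the M\"obius action on $B(\D^*)$ is isometric, so this does nothing. The correct device is the decomposition of Proposition \ref{summary} with $\nu=0$ --- split off a compactly supported dilatation $\mu_0$, apply Lemma \ref{reflection} only to the small tail $\varphi_1$, and control $\varphi-\varphi_1$ by an estimate of the type of Lemma \ref{base3}; this is precisely the $\supset$ half of the proof of Theorem \ref{compatible2} specialized to the base point. Second, even after that you only obtain \emph{some} quasiconformal extension of $g$ with complex dilatation in $\Bel_0^\alpha(\D)$, whereas condition $(2)$ concerns the barycentric section $s([g])$ specifically. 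Passing from ``some extension in $\Bel_0^\alpha(\D)$'' to ``$s([g])\in\Bel_0^\alpha(\D)$'' is the $\alpha$-weighted analogue of $\sigma(\Bel_0(\D))\subset\Bel_0(\D)$ from \cite{EMS} and of $\sigma(\Ael^p(\D))\subset\Ael^p(\D)$ from \cite{Cui, Tang, Yan}; the paper consistently treats such statements as citable theorems requiring a quantitative decay estimate for the Douady--Earle extension (\cite{Mat4}), and your one-line ``one checks the class is preserved'' conceals exactly that work --- the barycentric extension is not obtained from the Ahlfors--Weill one by any obvious composition. Third, the equivalence $(1)\Leftrightarrow(2)$, which you rightly call the hard part, is deferred entirely to \cite{Mat0, Mat3}; since that is where essentially all the analytic content of the proposition lives, and since the paper itself cites the whole proposition, this is defensible, but it means your argument is a reduction to the literature rather than a self-contained proof.
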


We consider 
the Teich\-m\"ul\-ler space $T^{>\alpha}_0=\bigcup_{\varepsilon >0} T^{\alpha+\varepsilon}_0$ 
of circle diffeomorphisms of H\"older continuous derivative
with exponent greater than $\alpha \in[0,1)$. In other words, 
for $\Diff_+^{>\alpha}(\S1)=\bigcup_{\varepsilon >0} \Diff_+^{\alpha+\varepsilon}(\S1)$,
we define $T^{>\alpha}_0=\Mob(\S1) \backslash \Diff_+^{>\alpha}(\S1)$.
Correspondingly,
we set the unions of increasing subspaces as
$$
\Bel_0^{>\alpha}(\D)=\bigcup_{\varepsilon>0}\Bel_0^{\alpha+\varepsilon}(\D);\quad
B^{>\alpha}_0(\D^*)=\bigcup_{\varepsilon>0}B^{\alpha+\varepsilon}_0(\D^*).
$$
The norms on these spaces can be given by $\lim_{\varepsilon \to 0} \Vert \mu \Vert_{\infty,\alpha+\varepsilon}$ and
$\lim_{\varepsilon \to 0} \Vert \varphi \Vert_{\infty,\alpha+\varepsilon}$, respectively.
We see that $T_0^{>\alpha}=\pi(\Bel_0^{>\alpha}(\D))$ and $\beta(T_0^{>\alpha})=\beta(T) \cap B^{>\alpha}_0(\D^*)$. In particular,
for $\alpha=0$, we have the Teich\-m\"ul\-ler space 
$T_0^{>0}$ of circle diffeomorphisms 
of H\"older continuous derivative with an arbitrary exponent.

The following lemma plays the role of the combination of Claims 1 and 2 in the previous proof,
and used to show that the quotient Bers embedding is well-defined. 
This lemma also tells us that the exact $T_0^\alpha$ does not seem to induce an affine foliated structure 
for $T$.

\begin{lemma}\label{base3}
Let $\nu \in \Bel(\D)$ hold $\Vert \nu \Vert_\infty < k<1$ and put $K=(1+k)/(1-k)$. Then,
$$
\Vert \Phi(r_\nu(\mu_1))-\Phi(r_\nu(\mu_2)) \Vert_{\infty,\,\alpha/K^2} \leq C \Vert \mu_1-\mu_2 \Vert_{\infty,\alpha}
$$
is satisfied 
for any $\mu_1, \mu_2 \in \Bel_0^\alpha(\D)$ with $\alpha \in (0,1)$, where $C>0$ depends only on 
$k$, $\Vert \mu_1 \Vert_\infty$, and $\Vert \mu_2 \Vert_{\infty,\alpha}$.
\end{lemma}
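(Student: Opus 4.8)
The plan is to reduce the estimate to the two ingredients developed in Section \ref{3}, namely the integral-kernel bound of Proposition \ref{base1} for the Schwarzian of a composition and the pointwise distortion estimate of Proposition \ref{base2} for the difference of right translations, and then to carry the weight $\rho^\alpha$ through these inequalities, tracking how the hyperbolic density is distorted by the $K$-quasiconformal map $f^\nu$. First I would set $\psi=\Phi(\nu)$, write $\varphi_i=\Phi(r_\nu(\mu_i))$ for $i=1,2$, and realize $\varphi_1-\varphi_2=S_{h_1\circ f_\nu^{-1}|_{\Omega^*}}-S_{h_2\circ f_\nu^{-1}|_{\Omega^*}}$ where $h_i=f_{\mu_i}$ has complex dilatation $\mu_i$ on $\D$ and is conformal on $\D^*$, and $\Omega=f_\nu(\D)$, $\Omega^*=f_\nu(\D^*)$. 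Since both Schwarzians are taken relative to the same conformal model on $\Omega^*$, their difference is governed by the difference $r_\nu(\mu_1)-r_\nu(\mu_2)$ of the complex dilatations on $\Omega$; applying Proposition \ref{base1} to each term (or a difference version of it) bounds $\rho_{\Omega^*}^{-2}(\zeta)|\varphi_1(\zeta)-\varphi_2(\zeta)|$ by a constant times a square root of an integral over $\Omega$ of $|r_\nu(\mu_1)-r_\nu(\mu_2)|^2/|w-\zeta|^4$, with the $L^\infty$ bounds on the dilatations absorbed into the constant $C$.

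Next I would substitute Proposition \ref{base2}: for $w=f^\nu(z)$ with $z\in\D$,
$$
|r_\nu(\mu_1)(w)-r_\nu(\mu_2)(w)|\le\frac{|\mu_1(z)-\mu_2(z)|}{\sqrt{(1-|\mu_1(z)|^2)(1-|\mu_2(z)|^2)}}\le\frac{C'\,\rho_\D^{-\alpha}(z)\,\Vert\mu_1-\mu_2\Vert_{\infty,\alpha}}{\sqrt{(1-|\mu_1(z)|^2)(1-|\mu_2(z)|^2)}},
$$
so the whole difference is controlled by $\Vert\mu_1-\mu_2\Vert_{\infty,\alpha}$ times a weighted integral of $\rho_\D^{-2\alpha}(z)$ over $\D$ pulled back through $f^\nu$. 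The key geometric input is the distortion of the hyperbolic density under the $K$-quasiconformal self-map $f^\nu$ of $\D$ (equivalently under the conformal map $g_\nu=f^\nu\circ f_\nu^{-1}:\Omega\to\D$ composed with $f_\nu$): one has $\rho_\D(f^\nu(z))\le K\,\rho_\D(z)$ and, more to the point, $\rho_\Omega$ and $\rho_\D\circ g_\nu^{-1}\cdot|(g_\nu^{-1})'|$ agree, and the quasiconformal distortion of hyperbolic distance by a factor $K$ forces $\rho_\D^{-\alpha}(z)$ to be comparable to $\rho_\Omega^{-\alpha/K^2}$ of the image point — this is where the exponent degrades from $\alpha$ to $\alpha/K^2$. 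I would make this precise using the standard fact that a $K$-quasiconformal map is locally $1/K$-Hölder in the hyperbolic metric (so $\operatorname{dist}_\D(0,z)\le K\operatorname{dist}_\D(0,f^\nu(z))+O(1)$, hence $\rho_\D^{-1}(z)\lesssim\rho_\D^{-1/K}(f^\nu(z))$, and squaring in the area element accounts for the $K^2$), together with the fact that the constant may also depend on $\Vert\mu_i\Vert_{\infty,\alpha}$ through the boundary behavior.

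Having these pieces, I would assemble: bound $\rho_{\Omega^*}^{-2+\alpha/K^2}(\zeta)|\varphi_1(\zeta)-\varphi_2(\zeta)|$ by $C\Vert\mu_1-\mu_2\Vert_{\infty,\alpha}$ times $\rho_{\Omega^*}^{\alpha/K^2}(\zeta)$ times the square root of $\int_\Omega\rho_{\Omega}^{-2\alpha/K^2}(w)|w-\zeta|^{-4}\,du\,dv$ (after transporting the weight from $\D$ to $\Omega$), and then invoke the elementary convergence estimate $\int_{\Omega}\rho_\Omega^{-2s}(w)|w-\zeta|^{-4}\,du\,dv\lesssim\rho_{\Omega^*}^{-2s}(\zeta)$ valid for $0<s<1$ — the same type of estimate that underlies $A^p(\D^*)\subset B_0(\D^*)$ and the arguments in \cite{Yan} — with $s=\alpha/K^2\in(0,1)$. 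Taking the supremum over $\zeta\in\Omega^*$ (equivalently over $z\in\D^*$) gives $\Vert\varphi_1-\varphi_2\Vert_{\infty,\alpha/K^2}\le C\Vert\mu_1-\mu_2\Vert_{\infty,\alpha}$, as claimed. The main obstacle is the middle paragraph: getting the precise weight-transport inequality with exponent exactly $\alpha/K^2$ rather than some unspecified smaller number, i.e.\ controlling how the $\rho^\alpha$-weight behaves under the $K$-quasiconformal change of variables $w=f^\nu(z)$ near the boundary; everything else is a bookkeeping combination of Propositions \ref{base1} and \ref{base2} with a standard kernel estimate.
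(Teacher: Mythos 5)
Your overall strategy --- combining Propositions \ref{base1} and \ref{base2} and transporting the weight $\rho^{\alpha}$ through Mori-type distortion estimates --- is the same as the paper's, but there is a genuine gap exactly at the point you flag as the main obstacle: the source of the exponent $\alpha/K^2$. The paper factors the estimate into two separate norm inequalities, and the two powers of $K$ come from two \emph{different} quasiconformal maps. The first, $\Vert r_\nu(\mu_1)-r_\nu(\mu_2)\Vert_{\infty,\alpha/K}\le C_1\Vert\mu_1-\mu_2\Vert_{\infty,\alpha}$, uses Proposition \ref{base2} together with the Mori estimate for $f^\nu$; this is your $\rho_\D^{-\alpha}(z)\lesssim\rho_\D^{-\alpha/K}(f^\nu(z))$ and costs one factor of $K$. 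The second, $\Vert\Phi(\mu'_1)-\Phi(\mu'_2)\Vert_{\infty,\alpha/\widetilde K}\le C_2\Vert\mu'_1-\mu'_2\Vert_{\infty,\alpha}$ applied to $\mu'_i=r_\nu(\mu_i)$, comes from Proposition \ref{base1}, whose integral is taken over $f_{\mu'_2}(\D)$, \emph{not} over $\Omega=f_\nu(\D)$; transporting the weight there costs a second factor $\widetilde K=(1+\Vert r_\nu(\mu_2)\Vert_\infty)/(1-\Vert r_\nu(\mu_2)\Vert_\infty)$. Your plan replaces both changes of variables by the single map $f_\nu$ and attributes the second factor of $K$ to ``squaring in the area element,'' which is not where it comes from: the Jacobian does not degrade the exponent a second time. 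Relatedly, the identification $\varphi_1-\varphi_2=S_{h_1\circ f_\nu^{-1}}-S_{h_2\circ f_\nu^{-1}}$ is incorrect; that difference pulls back to $\Phi(\mu_1)-\Phi(\mu_2)$, whereas $\varphi_1-\varphi_2=\Phi(r_\nu(\mu_1))-\Phi(r_\nu(\mu_2))$ is governed by the Schwarzian of $f_{r_\nu(\mu_1)}\circ f_{r_\nu(\mu_2)}^{-1}$ on $f_{r_\nu(\mu_2)}(\D^*)$.

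Even after correcting this, $\widetilde K$ is a priori controlled only by $\Vert r_\nu(\mu_2)\Vert_\infty$, which can exceed $k$, so one only obtains the exponent $\alpha/(K\widetilde K)$ with $\widetilde K$ possibly much larger than $K$. The paper repairs this by exploiting $\mu_2\in\Bel_0^\alpha(\D)$: it decomposes $\mu_2=\mu_{20}\ast\mu_{21}$ with $\mu_{20}$ compactly supported and $\Vert\mu_{21}\Vert_\infty$ arbitrarily small, so that $\Vert r_\nu(\mu_2)\Vert_\infty$ can effectively be taken arbitrarily close to $\Vert\nu\Vert_\infty<k$, at the cost of letting the constant depend on $\Vert\mu_2\Vert_{\infty,\alpha}$. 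This is precisely why $C$ in the statement depends on $\Vert\mu_2\Vert_{\infty,\alpha}$ rather than only on $\Vert\mu_2\Vert_\infty$ --- a dependence your plan mentions in passing but never uses. Without this step you cannot reach the exponent $\alpha/K^2$.
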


\begin{proof}
The Mori theorem (\cite[Section III.C]{Ah0}) implies that there is some constant $C_0 \geq 1$ 
depending only on $k$ such that
$$
\frac{1}{C_0}(1-|z|)^K \leq 1-|\zeta| 
$$
for $\zeta=f^\nu(z)$.
Then, 
$$
\rho_{\D}^{\alpha/K}(\zeta) =\left(\frac{2}{1-|\zeta|^2}\right)^{\alpha/K}
\leq 2 C_0 \left(\frac{2}{1-|z|^2}\right)^{\alpha}
=2 C_0 \rho_{\D}^{\alpha}(z),
$$
and it follows from Proposition \ref{base2} that
$$
\rho_{\D}^{\alpha/K}(\zeta)|r_\nu(\mu_1)(\zeta)-r_\nu(\mu_2)(\zeta)| \leq C_1 \rho_{\D}^{\alpha}(z)|\mu_1(z)-\mu_2(z)|
$$
for $C_1=2C_0/\sqrt{(1-\Vert \mu_1 \Vert^2_\infty)(1-\Vert \mu_2 \Vert^2_\infty)}$. 
Thus, we have
$$
\Vert r_\nu(\mu_1)-r_\nu(\mu_2) \Vert_{\infty,\alpha/K} \leq C_1 \Vert \mu_1-\mu_2 \Vert_{\infty,\alpha}.
$$

We see from the proof of \cite[Lemma 3.3]{Mat7} based on Proposition \ref{base1} that
$$
\Vert \Phi(\mu'_1)-\Phi(\mu'_2) \Vert_{\infty,\,\alpha/\widetilde K} 
\leq C_2 \Vert \mu'_1-\mu'_2 \Vert_{\infty,\alpha}
$$
for $\widetilde K=(1+\tilde k)/(1-\tilde k)$ with $\tilde k=\Vert \mu'_2 \Vert_\infty$,
where $C_2>0$ is a constant given by $\Vert \mu'_1 \Vert_\infty$ and $\Vert \mu'_2 \Vert_\infty$. 
We apply this inequality for $\mu'_1=r_\nu(\mu_1)$ and $\mu'_2=r_\nu(\mu_2)$ to obtain
$$
\Vert \Phi(r_\nu(\mu_1))-\Phi(r_\nu(\mu_2)) \Vert_{\infty,\,\alpha/(K \widetilde K)} 
\leq C_2 \Vert r_\nu(\mu_1)-r_\nu(\mu_2) \Vert_{\infty,\alpha/K}.
$$
However, since $\mu_2 \in \Bel_0^\alpha(\D)$, 
we can assume 
$\tilde k=\Vert r_\nu(\mu_2) \Vert_\infty$ to be arbitrarily close to $\Vert \nu \Vert_\infty$
by allowing $C_2$ to depend on $\Vert \mu_2 \Vert_{\infty,\alpha}$.
Indeed, to see this, we use the decomposition $\mu_2=\mu_{20} \ast \mu_{21}$ such that $\mu_{20}$ has a compact support in $\D$ and
$\Vert \mu_{21} \Vert_\infty$ is arbitrarily small. 
Then, we apply the previous estimate for
$\Vert r_\nu(\mu_1)-r_\nu(\mu_2) \Vert_{\infty,\alpha/K}$ and conclude that
$$
\Vert \Phi(r_\nu(\mu_1))-\Phi(r_\nu(\mu_2)) \Vert_{\infty,\,\alpha/K^2} 
\leq C_1 C_2 \Vert \mu_1-\mu_2 \Vert_{\infty,\alpha}.
$$
Here, the constant $C$ 
can be chosen as $C=C_1 C_2$ which depends only on 
$k$, $\Vert \mu_1 \Vert_\infty$, and $\Vert \mu_2 \Vert_{\infty,\alpha}$.
\end{proof}

We state the main result in this section. 
The arguments are parallel to those for Theorem \ref{A^p_B}
in some parts.

\begin{theorem}\label{compatible3}
For each $\nu \in \Bel(\D)$, let $\psi=\Phi(\nu) \in B(\D^*)$. Then,
$$
\Phi \circ r_\nu^{-1}(\Bel_0^{>0}(\D)) = \beta(T) \cap \{B_0^{>0}(\D^*)+\psi\}.
$$
Hence, $\beta \circ R_{\tau}^{-1}(T_0^{>0}) = \beta(T) \cap \{B_0^{>0}(\D^*)+\beta(\tau)\}$ for every
$\tau \in T$.
\end{theorem}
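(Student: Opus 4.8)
The plan is to follow the same two–inclusion strategy used for Theorem~\ref{A^p_B}, replacing the pair of estimates (Claims 1 and 2) by the single estimate of Lemma~\ref{base3}, and then dealing with the extra subtlety that $B_0^{>0}(\D^*)=\bigcup_{\varepsilon>0}B_0^\varepsilon(\D^*)$ is a union of spaces, so that the exponent is allowed to deteriorate by a bounded factor under the relevant maps.

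\textbf{The inclusion $\subset$.} Fix $\mu\in\Bel_0^{>0}(\D)$ and $\nu\in\sigma(\Bel(\D))$; we want $\Phi(r_\nu^{-1}(\mu))-\Phi(\nu)\in B_0^{>0}(\D^*)$. Since $\mu\in\Bel_0^{\alpha}(\D)$ for some $\alpha\in(0,1)$, apply Lemma~\ref{base3} to the right translation $r_{\nu^{-1}}=r_\nu^{-1}$ with $\mu_1=\mu$ and $\mu_2=0$: writing $k$ for a bound with $\Vert\nu^{-1}\Vert_\infty<k<1$ and $K=(1+k)/(1-k)$, we get
$$
\Vert\Phi(r_\nu^{-1}(\mu))-\Phi(r_\nu^{-1}(0))\Vert_{\infty,\,\alpha/K^2}\leq C\Vert\mu\Vert_{\infty,\alpha}<\infty.
$$
Since $r_\nu^{-1}(0)=\nu^{-1}$ and $\Phi(r_\nu^{-1}(0))$ is the Bers image of $\nu^{-1}$, a short bookkeeping step (using that $\Phi\circ r_\nu^{-1}$ shifts by the constant $\psi=\Phi(\nu)$ on the relevant fibre, i.e. the argument already in the proof of Proposition~\ref{basic}) gives $\Phi(r_\nu^{-1}(\mu))-\psi\in B_0^{\alpha/K^2}(\D^*)\subset B_0^{>0}(\D^*)$. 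This proves $\subset$; note that unlike the integrable case no asymptotic conformality at the boundary of $\Omega$ has been invoked here — Lemma~\ref{base3} does all the work at once.

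\textbf{The inclusion $\supset$.} Take $\varphi\in B_0^{>0}(\D^*)$ with $\varphi+\psi\in\beta(T)$, say $\varphi\in B_0^{\alpha}(\D^*)$. Since $B_0^{>0}(\D^*)\subset B_0(\D^*)$, Proposition~\ref{basic} produces $\widehat f$ conformal on $\Omega^*$ and asymptotically conformal on $\Omega$ with $S_{\widehat f\circ f_\nu|_{\D^*}}=\varphi+\psi$, and Proposition~\ref{summary} gives the decomposition $\widehat f=\widehat f_0\circ\widehat f_1$ together with $\varphi_1$, $\mu_0$, $\mu_1$. The bootstrap is as in Theorem~\ref{A^p_B}: from property (3), $\varphi-\varphi_1=\Phi(\mu_0\ast\mu_1\ast\nu)-\Phi(\mu_1\ast\nu)$, and applying Lemma~\ref{base3} (with $\mu_0\ast\mu_1$ and $\mu_1$ in place of $\mu_1,\mu_2$ and an appropriate bound governing the right translation $r_{\nu^{-1}}$) shows $\varphi-\varphi_1\in B_0^{\alpha/K^2}(\D^*)$; since $\varphi\in B_0^\alpha(\D^*)\subset B_0^{\alpha/K^2}(\D^*)$ this gives $\varphi_1\in B_0^{\alpha/K^2}(\D^*)$. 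Next, property (1),
$$
|\widehat\mu_1\circ f_\nu(z)|\leq\frac{1}{\delta}\rho_{\D^*}^{-2}(z^*)|\varphi_1(z^*)|,
$$
combined with the symmetry $\rho_\D^\beta(z)=\rho_{\D^*}^\beta(z^*)$ under $z\mapsto z^*=(\bar z)^{-1}$, lets us bound $\Vert\widehat\mu_1\circ f_\nu\Vert_{\infty,\beta}$ by a constant times $\Vert\varphi_1\Vert_{\infty,2-\beta}$ — choosing $\beta=\alpha/K^2$ this is finite, so $\widehat\mu_1\circ f_\nu\in\Bel_0^{\alpha/K^2}(\D)$ and hence, since $|\widehat\mu_1\circ f_\nu|=|\mu_1\circ f^\nu|$, $\mu_1\circ f^\nu\in\Bel_0^{\alpha/K^2}(\D)$. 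Transferring this to $\mu_1$ itself requires controlling the change of the weight $\rho_\D^\beta$ under $f^\nu$; here the Mori-theorem distortion estimate $C_0^{-1}(1-|z|)^K\leq 1-|f^\nu(z)|$ (already used inside Lemma~\ref{base3}) shows $\mu_1\in\Bel_0^{\beta/K}(\D)$ with a further loss of exponent by the factor $K$. Finally property (2) gives $\mu_0$ compactly supported, hence in every $\Bel_0^\gamma(\D)$, so $\mu_f=\mu_0\ast\mu_1\in\Bel_0^{\beta/K}(\D)\subset\Bel_0^{>0}(\D)$, and $\varphi+\psi=\Phi(r_\nu^{-1}(\mu_f))\in\Phi\circ r_\nu^{-1}(\Bel_0^{>0}(\D))$.

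\textbf{Main obstacle.} The delicate point is precisely the bookkeeping of the H\"older exponent through the chain: the right translation $r_\nu$, the Bers projection $\Phi$, and the conjugation by $f^\nu$ each degrade the exponent by a bounded multiplicative factor (powers of $K$). This is harmless for $T_0^{>0}$ because that space is closed under such finite losses, which is exactly why one must work with the union $\bigcup_\varepsilon T_0^\varepsilon$ rather than a single $T_0^\alpha$ — as the remark before Lemma~\ref{base3} already signals. Care is also needed that the constant $K$ (hence the loss) is controlled purely by $\Vert\nu\Vert_\infty$ and the sup-norms of the Beltrami coefficients involved, all of which are bounded once $\varphi+\psi\in\beta(T)$ is fixed; then the global conclusion $\beta\circ R_\tau^{-1}(T_0^{>0})=\beta(T)\cap\{B_0^{>0}(\D^*)+\beta(\tau)\}$ follows exactly as in the previous theorems, after choosing a representative $\nu\in\sigma(\Bel(\D))$ with $\Phi(\nu)=\beta(\tau)$.
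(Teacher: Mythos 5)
Your overall strategy coincides with the paper's: the inclusion $\subset$ via Lemma~\ref{base3} applied to $r_{\nu^{-1}}=r_\nu^{-1}$ with the pair $(\mu,0)$, and the inclusion $\supset$ via Propositions~\ref{basic} and~\ref{summary} followed by the bootstrap $\varphi_1\in B_0^{>0}(\D^*)$, the reflection estimate of property (1), the Mori-type transfer from $\mu_1\circ f^\nu$ to $\mu_1$ with a controlled loss of exponent, and the compact support of $\mu_0$. Two harmless slips: $r_\nu^{-1}(0)=0\ast\nu=\nu$, not $\nu^{-1}$, so no extra bookkeeping is needed in the $\subset$ direction; and with the paper's conventions the bound coming from property (1) reads $\Vert\widehat\mu_1\circ f_\nu\Vert_{\infty,\beta}\lesssim\Vert\varphi_1\Vert_{\infty,\beta}$, not $\Vert\varphi_1\Vert_{\infty,2-\beta}$.

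The one step that does not go through as you wrote it is the derivation of $\varphi-\varphi_1\in B_0^{>0}(\D^*)$. You apply Lemma~\ref{base3} to the pair $(\mu_0\ast\mu_1,\,\mu_1)$, but the lemma assumes $\mu_1,\mu_2\in\Bel_0^\alpha(\D)$ and its constant depends on $\Vert\mu_2\Vert_{\infty,\alpha}$; at this stage of the proof nothing is known about $\Vert\mu_1\Vert_{\infty,\alpha}$ --- indeed, $\mu_1\in\Bel_0^{>0}(\D)$ is exactly what the subsequent steps are designed to establish, so the argument is circular. The paper avoids this by rewriting property (3) as $\varphi-\varphi_1=\Phi(r_{\nu'}^{-1}(\mu_0))-\Phi(r_{\nu'}^{-1}(0))$ for the shifted base point $\nu'=\mu_1\ast\nu$ and then invoking the already-proved inclusion $\subset$ at that base point (equivalently, Lemma~\ref{base3} for the pair $(\mu_0,0)$), whose constant involves only $\Vert\mu_0\Vert_\infty$ and $\Vert\mu_0\Vert_{\infty,\alpha}$, both finite because $\mu_0$ is supported in a compact subset of $\D$. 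With that substitution your proof matches the paper's.
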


\begin{proof}
For one inclusion $\subset$, we
take an arbitrary $\mu \in \Bel_0^{>0}(\D)$. There is some $\alpha \in (0,1)$ such that 
$\mu \in \Bel_0^{\alpha}(\D)$. 
We apply Lemma \ref{base3} for $\mu_1=\mu$ and $\mu_2=0$ with the right translation $r_{\nu^{-1}}=r_\nu^{-1}$.
Then, we obtain
$$
\Vert \Phi(r^{-1}_\nu(\mu))-\Phi(\nu) \Vert_{\infty,\,\alpha/K^2}
= \Vert \Phi(r^{-1}_\nu(\mu))-\Phi(r^{-1}_\nu(0)) \Vert_{\infty,\,\alpha/K^2}
\leq C \Vert \mu \Vert_{\infty,\alpha}<\infty.
$$
This implies that
$\Phi (r_\nu^{-1}(\mu)) \in B_0^{>0}(\D^*)+\Phi(\nu)$, and hence 
$\Phi \circ r_\nu^{-1}(\Bel_0^{>0}(\D))$ is contained in $B_0^{>0}(\D^*)+\psi$.

For the other inclusion $\supset$,
we take $\varphi \in B_0^{>0}(\D^*)$ such that $\varphi+\psi \in \beta(T)$. Since 
$B_0^{>0}(\D^*) \subset B_0(\D^*)$, Proposition \ref{basic} asserts that
there is
a quasiconformal homeomorphism $\widehat f:\Chat \to \Chat$ conformal on $\Omega^*$
and asymptotically conformal on $\Omega$ such that $S_{\widehat f \circ f_\nu|_{\D^*}}=\varphi+\psi$.
According to Proposition \ref{summary}, we consider the decomposition $\widehat f=\widehat f_0 \circ \widehat f_1$
with related maps.
Since $\varphi \in B_0^{>0}(\D^*)$, if $\varphi-\varphi_1 \in B_0^{>0}(\D^*)$, then $\varphi_1 \in B_0^{>0}(\D^*)$.
By property (2), $\mu_0$ in particular belongs to $\Bel_0^{>0}(\D)$, and then by property (3) and 
the consequence from Lemma \ref{base3} just proved above, we have that $\varphi-\varphi_1 \in B_0^{>0}(\D^*)$.
Hence, $\varphi_1 \in B_0^{>0}(\D^*)$. 
Property (1) yields that
$$
|\widehat \mu_1 \circ f_\nu(z)|
\leq \frac{1}{\delta}\, \rho_{\D^*}^{-2}(z^*)|\varphi_1(z^*)|.
$$
This implies that $\widehat \mu_1 \circ f_\nu \in \Bel_0^{>0}(\D)$.

We will show that the complex dilatation $\mu_1$ of $f_1:\D \to \D$
belongs to $\Bel_0^{>0}(\D)$. Since $|\widehat \mu_1 \circ f_\nu|=|\mu_1 \circ f^\nu|$,
we have that $\mu_1 \circ f^\nu \in \Bel_0^{>0}(\D)$. By the Mori theorem at the boundary, we know that
$$
\frac{1}{C_0} (1-|z|)^{-1/K} \leq 1-|f^\nu(z)| 
\quad (z \in \D)
$$
for some constant $C_0 \geq 1$, where $K=(1+\Vert \nu \Vert_\infty)/(1-\Vert \nu \Vert_\infty)$.
From this estimate, we see that $\mu_1 \in \Bel_0^{>0}(\D)$.

By property (2), the support of $\mu_0$ 
is contained in a compact subset of $\D$.
Hence, we see that the complex dilatation $\mu_f=\mu_0 \ast \mu_1$ of $f=f_0 \circ f_1$
belongs to $\Bel_0^{>0}(\D)$.
Since the complex dilatation on $\D$ of the quasiconformal homeomorphism $\widehat f \circ f_\nu$  
is $r_\nu^{-1}(\mu_f)$, we have that 
$$
\varphi +\psi=\Phi(r_\nu^{-1}(\mu_f)) \in \Phi \circ r_\nu^{-1}(\Bel_0^{>0}(\D)),
$$
which shows the inclusion $\supset$.
\end{proof}

\begin{corollary}
The quotient Bers embedding 
$\widehat \beta^{>0}_0:T_0^{>0} \backslash T \to B_0^{>0}(\D^*) \backslash B(\D^*)$ is well-defined to be
a homeomorphism onto the image.
\end{corollary}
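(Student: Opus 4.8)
The plan is to derive the corollary directly from Theorem~\ref{compatible3} in exactly the same way the corollary following Theorem~\ref{B_0-B} is derived from that theorem. First I would observe that the equation in Theorem~\ref{compatible3}, specialized to each fibre, says precisely that for every $\tau \in T$ the set $\beta \circ R_\tau^{-1}(T_0^{>0})$ equals the slice $\beta(T) \cap \{B_0^{>0}(\D^*)+\beta(\tau)\}$ of the affine foliation of $B(\D^*)$ by the cosets of the subspace $B_0^{>0}(\D^*)$. Since $T_0^{>0}$ is a subgroup of $(T,\ast)$ (being $\pi(\Bel_0^{>0}(\D))$ with $\Bel_0^{>0}(\D)$ a subgroup, by the same argument used for $\Bel_0(\D)$) and $R_\tau$ is the right translation $\tau' \mapsto \tau' \ast \tau^{-1}$, the decomposition of $T$ into right cosets $R_\tau^{-1}(T_0^{>0})$ corresponds bijectively, under $\beta$, to the decomposition of $\beta(T)$ into the slices $\beta(T) \cap (B_0^{>0}(\D^*)+\psi)$. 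In other words $\beta$ carries the $T_0^{>0}$-coset partition of $T$ to the $B_0^{>0}(\D^*)$-coset partition of $\beta(T)$, and this is exactly the compatibility needed for the quotient map to be both well-defined and injective.

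Next I would unwind this into the statement about $\widehat\beta^{>0}_0$. The inclusion $\subset$ in Theorem~\ref{compatible3} gives: if $\tau_1, \tau_2 \in T$ lie in the same $T_0^{>0}$-coset, i.e. $\tau_1 = \tau_2 \ast \sigma$ with $\sigma \in T_0^{>0}$, then $\beta(\tau_1)$ and $\beta(\tau_2)$ differ by an element of $B_0^{>0}(\D^*)$ (apply the inclusion with $\nu$ representing $\tau_2$); hence the assignment $[\tau] \mapsto [\beta(\tau)]$ descends to a well-defined map $\widehat\beta^{>0}_0 \colon T_0^{>0}\backslash T \to B_0^{>0}(\D^*)\backslash B(\D^*)$. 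The reverse inclusion $\supset$ gives injectivity: if $\beta(\tau_1) - \beta(\tau_2) \in B_0^{>0}(\D^*)$, then writing $\psi = \beta(\tau_2) = \Phi(\nu)$ we have $\beta(\tau_1) \in \beta(T) \cap \{B_0^{>0}(\D^*)+\psi\} = \Phi \circ r_\nu^{-1}(\Bel_0^{>0}(\D))$, so $\tau_1 \in R_\tau^{-1}(T_0^{>0})$ where $\tau=\pi(\nu) = \tau_2$, which means $[\tau_1] = [\tau_2]$ in $T_0^{>0}\backslash T$. Thus $\widehat\beta^{>0}_0$ is a well-defined injection.

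Finally I would upgrade the injection to a homeomorphism onto its image. The Teich\-m\"ul\-ler projection $\pi \colon \Bel(\D) \to T$ and the Bers embedding $\beta$ are continuous and open onto their images, and the coset projections $T \to T_0^{>0}\backslash T$ and $B(\D^*) \to B_0^{>0}(\D^*)\backslash B(\D^*)$ are continuous and open for the quotient topologies (as in the $p$-integrable case treated just before Corollary after Theorem~\ref{A^p_B-group}); chasing these maps around the commuting square shows $\widehat\beta^{>0}_0$ is continuous, open onto its image, and hence a homeomorphism onto the image. The only genuine content here is Theorem~\ref{compatible3} itself — in particular the nontrivial inclusion $\supset$, which rests on Proposition~\ref{summary}, Lemma~\ref{reflection} and Lemma~\ref{base3} — so I expect no real obstacle in the corollary beyond being careful that $\Bel_0^{>0}(\D)$ is indeed a subgroup (which follows because it is a union of the increasing family $\Bel_0^{\alpha+\varepsilon}(\D)$, each of which is preserved under $\ast$ and inverse up to the compact-support/Mori-distortion estimates already invoked in the proof of Theorem~\ref{compatible3}), so that the coset language applies verbatim.
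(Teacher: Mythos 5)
Your proposal is correct and follows essentially the same route as the paper: the corollary is obtained from Theorem~\ref{compatible3} exactly as the corresponding corollaries after Theorem~\ref{B_0-B} and Theorem~\ref{A^p_B-group} are obtained, with the inclusion $\subset$ giving well-definedness, the inclusion $\supset$ giving injectivity, and the continuity and openness of the two quotient projections upgrading the injection to a homeomorphism onto the image. Your closing remark correctly identifies the one point worth checking, namely that $T_0^{>0}$ is a subgroup so that the coset language applies, which holds precisely because the H\"older exponent may degrade under composition but remains positive.
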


\section{Stratification of foliated structures}\label{6} 

We can also consider an affine foliated structure for a certain subspace of the universal Teich\-m\"ul\-ler space $T$
induced by a smaller subspace. We consider again an affine foliated structure for this smaller subspace, and repeat this
process.
Then, we obtain a stratification of affine foliated structures of $T$.
In this section, we observe such an example.

First, we show the affine foliated structure for the little Teich\-m\"ul\-ler space $T_0$ induced by
$T_0^{>\alpha}$, the Teich\-m\"ul\-ler space of circle diffeomorphisms of H\"older continuous derivative
of exponent greater than $\alpha \in [0,1)$.
We in particular have the affine foliated structure  
for the little subspace $T_0$ by $T^{>0}_0$. 

We prepare the asymptotically conformal version of 
the Mori theorem at the boundary.
The corresponding result under a stronger assumption that the complex dilatation $\mu$ has an explicit decay order
as in $\Bel^\alpha_0(\D)$ is given in \cite[Theorem 6.4] {Mat2}.

\begin{lemma}\label{mori2}
Let $f^\nu \in \AC(\D)$ be a normalized asymptotically conformal homeomorphism of $\D$ with the complex dilatation $\nu\in \Bel_0(\D)$. 
Let $\varepsilon >0$ be an arbitrary positive constant.
Then, there is a constant $A \geq 1$ depending only on 
$\nu$ and $\varepsilon$ such that
$$
\frac{1}{A} (1-|z|)^{1+\varepsilon} \leq 1-|f^\nu(z)| \leq A(1-|z|)^{1-\varepsilon}
$$
for every $z \in \D$.
\end{lemma}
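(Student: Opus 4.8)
The statement to prove is Lemma~\ref{mori2}: an asymptotically conformal version of the Mori distortion theorem at the boundary. The plan is to reduce it to the classical Mori theorem by exhausting $\D$ with a nested family of quasiconformal maps whose dilatation norms are controlled by the decay of $\nu$.

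First I would fix $\varepsilon>0$. Since $\nu\in\Bel_0(\D)$, we have $|\nu(z)|\to 0$ as $|z|\to 1$, so for each $t\in(0,1)$ we may set $k(t)=\Vert\nu|_{\{|z|\ge t\}}\Vert_\infty$, a nonincreasing function with $k(t)\to 0$ as $t\to 1$. Choose a radius $r_0$ close to $1$ so that the corresponding maximal dilatation $K_0=(1+k(r_0))/(1-k(r_0))$ satisfies $K_0<1+\varepsilon$ and $1/K_0>1-\varepsilon$ (after possibly shrinking $\varepsilon$ in the exponents, which is harmless since we only need the stated inequality for the given $\varepsilon$). The key geometric idea: compare $f^\nu$ with the quasiconformal homeomorphism $h$ of $\Chat$ (or of $\D$) whose complex dilatation agrees with $\nu$ on the annulus $\{r_0<|z|<1\}$ and is $0$ on $\{|z|\le r_0\}$. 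Then $f^\nu = g\circ h$ where $g$ is a quasiconformal self-map of the Jordan domain $h(\D)$ whose dilatation is supported in the compact set $h(\{|z|\le r_0\})$.

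The map $g$ is conformal near $\partial h(\D)$, hence extends to a bi-Lipschitz (indeed conformal-at-the-boundary) homeomorphism in a collar neighbourhood of the boundary, so it distorts the quantity $1-|\cdot|$ by at most a bounded factor; this contributes only to the constant $A$, not to the exponent. For $h$ itself, one applies the classical Mori-type boundary estimate for a $K_0$-quasiconformal map: there is $C_0\ge 1$ with $\frac{1}{C_0}(1-|z|)^{K_0}\le 1-|h(z)|\le C_0(1-|z|)^{1/K_0}$ for $z$ in the collar $\{r_0<|z|<1\}$ (this is exactly the form of the Mori boundary distortion already invoked in the proof of Theorem~\ref{compatible3} and in \cite{Ah0}, \cite{Leh}). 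Composing the two estimates, and absorbing the behaviour on the compact set $\{|z|\le r_0\}$ (where $1-|z|$ and $1-|f^\nu(z)|$ are both bounded above and below away from $0$) into a larger constant $A$, we obtain
\[
\frac{1}{A}(1-|z|)^{K_0}\le 1-|f^\nu(z)|\le A(1-|z|)^{1/K_0}
\]
for all $z\in\D$, and since $K_0<1+\varepsilon$ and $1/K_0>1-\varepsilon$ this gives the claim.

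The main obstacle is making precise the assertion that $g$, being conformal in a one-sided neighbourhood of $\partial h(\D)$, distorts $1-|\cdot|$ boundedly: $h(\D)$ is only a quasidisk, not a smooth domain, so one cannot directly quote boundary regularity of conformal maps. The clean way around this is to conjugate back to the disk by a Riemann map $\omega:h(\D)\to\D$ (as in the diagram preceding Proposition~\ref{summary}): then $\omega\circ g\circ\omega^{-1}$ is a normalized quasiconformal self-map of $\D$ whose dilatation has compact support, and one estimates the composition $f^\nu=(\omega^{-1})\circ(\omega g\omega^{-1})\circ(\omega h)$. Here $\omega\circ h$ is still $K_0$-quasiconformal (composition with a conformal map does not change the dilatation bound), $\omega g\omega^{-1}$ is quasiconformal with compactly supported dilatation hence, being conformal near $\S1$, quasi-isometric for the hyperbolic metric near the boundary, and $\omega^{-1}$ together with $\omega$ are quasisymmetric boundary maps that again only affect the constant. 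Tracking the exponents through this three-fold composition, the only factor that produces an exponent different from $1$ is the $K_0$-quasiconformal piece, which yields exactly $(1-|z|)^{K_0}$ and $(1-|z|)^{1/K_0}$; everything else is absorbed into $A$. This gives the lemma.
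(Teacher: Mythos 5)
Your proof is correct and is essentially the paper's argument: both split $f^\nu$ into a factor whose complex dilatation is compactly supported in $\D$ (hence conformal in a collar of $\S1$, controlled by reflection plus the Koebe distortion theorem) and a factor whose dilatation is uniformly small (controlled by the classical Mori theorem, giving exponents $K_0$ and $1/K_0$ close to $1$). The only differences are cosmetic: you compose in the order $g\circ h$ with the small-dilatation factor applied first, whereas the paper writes $f^\nu=f_1\circ f_0$ with the compactly supported factor first, and your detour through a Riemann map of $h(\D)$ is unnecessary once $h$ is taken to be a normalized quasiconformal self-map of $\D$, so that $h(\D)=\D$.
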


\begin{proof}
Since $\nu \in \Bel_0(\D)$,
we can find $t_0 \in (0,1/4)$ so that $|\nu(\zeta)| \leq \varepsilon/2$ for almost every $\zeta \in \D$ with $|\zeta|>1-\sqrt{t_0}$.
This depends on $\nu$ and $\varepsilon$.
We define a Beltrami coefficient $\nu_0(\zeta)$ by
setting $\nu_0(\zeta) = \nu(\zeta)$ on $|\zeta| \leq \sqrt{t_0}$ and $\nu_0(\zeta) \equiv 0$ elsewhere.
Let $f_0$ be the normalized quasiconformal homeomorphism of $\D$ with the complex dilatation $\nu_0$.
Let $f_1$ be the quasiconformal homeomorphism of $\D$ such that $f = f_1 \circ f_0$. 
For $K=(1+\varepsilon/2)/(1-\varepsilon/2)$, we see that $f_1$ is a $K$-quasiconformal homeomorphism of $\D$. Here, we have
$$
\frac{1}{K}=\frac{1-\varepsilon/2}{1+\varepsilon/2} \geq 1-\varepsilon.
$$

First, we apply a distortion theorem to the conformal homeomorphism $f_0(\zeta)$ restricted to 
$\zeta \in \D$ with $|\zeta| >\sqrt{t_0}$.
In fact, we may assume that $f_0$ is a conformal homeomorphism of an annulus $\{\sqrt{t_0}<|\zeta|<1/\sqrt{t_0}\}$
by the reflection principle.  
Since $\S1$ is compact, there is some constant $L \geq 1$ such that
the modulus of the derivative $|f_0'(\xi)|$ at any $\xi \in \S1$ is bounded by $L$,
which is depending only on $t_0$. 
The Koebe distortion theorem (see \cite[Theorem 1.3]{P}) in the disk of radius $\sqrt{t_0}$ and center 
$\xi=z/|z|$ yields
$$
1-|f_0(z)| \leq \frac{L(1-|z|)}{\{1-(1-|z|)/\sqrt{t_0}\}^2} \leq \frac{L(1-|z|)}{(1-\sqrt{t_0})^2} \leq 4L(1-|z|) 
$$
for every $z \in \D$ with $1-|z|<t_0$.

Next, we apply the Mori theorem (\cite[Section III.C]{Ah0}) to the quasiconformal homeomorphism $f_1$ of $\D$.
It implies that
$$
1-|f_1(w)| \leq 16(1-|w|)^{1/K} \leq 16(1-|w|)^{1-\varepsilon}
$$
for every $w \in \D$. Then, by setting $w=f_0(z)$, we have that
$$
1-|f(z)| \leq 16\{4L(1-|z|)\}^{1-\varepsilon} \leq 64 L(1-|z|)^{1-\varepsilon}.
$$

If $1-|z| \geq t_0$, we simply obtain
$$
1-|f(z)| \leq 1 \leq \frac{1}{t_0}(1-|z|)^{1-\varepsilon}. 
$$
Combined with the previous estimate, this gives the right side inequality in the statement.
For the left side inequality, we apply the lower estimates in both the Koebe and the Mori theorems, or
apply the above arguments to the inverse map $f^{-1}$.
\end{proof}

\begin{theorem}\label{compatible}
Let $\alpha \in [0,1)$ be an arbitrary exponent.
For each $\nu \in \Bel_0(\D)$, let $\psi=\Phi(\nu) \in B_0(\D^*)$. Then,
$$
\Phi \circ r_\nu^{-1}(\Bel_0^{>\alpha}(\D)) = \beta(T_0) \cap \{B_0^{>\alpha}(\D^*)+\psi\}.
$$
Hence, $\beta \circ R_{\tau}^{-1}(T_0^{>\alpha}) = \beta(T) \cap \{B_0^{>\alpha}(\D^*)+\beta(\tau)\}$ for every
$\tau \in T_0$.
\end{theorem}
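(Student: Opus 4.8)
The plan is to follow the same two-inclusion scheme already used for Theorems~\ref{A^p_B} and \ref{compatible3}, but now working inside $\beta(T_0)$ rather than $\beta(T)$ and with the family $\{B_0^{>\alpha}(\D^*)+\psi\}$. Throughout I may assume $\nu$ is obtained by the barycentric extension, so that $\nu \in \sigma(\Bel_0(\D))$ and the distortion estimates of Lemma~\ref{mori2} apply to $f^\nu$; likewise I may assume $\mu_2=0$ when invoking Lemma~\ref{base3}.

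For the inclusion $\subset$, take $\mu \in \Bel_0^{>\alpha}(\D)$, so $\mu \in \Bel_0^{\alpha+\varepsilon}(\D)$ for some small $\varepsilon>0$. Apply Lemma~\ref{base3} with $\mu_1=\mu$, $\mu_2=0$ and the right translation $r_{\nu^{-1}}=r_\nu^{-1}$, noting that $\Vert\nu\Vert_\infty<1$ can be taken as the bound $k$. This gives
$$
\Vert \Phi(r_\nu^{-1}(\mu))-\Phi(\nu)\Vert_{\infty,\,(\alpha+\varepsilon)/K^2} \leq C\,\Vert\mu\Vert_{\infty,\alpha+\varepsilon}<\infty,
$$
so $\Phi(r_\nu^{-1}(\mu))-\psi \in B_0^{(\alpha+\varepsilon)/K^2}(\D^*)\subset B_0^{>\alpha}(\D^*)$ (here one uses $(\alpha+\varepsilon)/K^2>\alpha$ when $\varepsilon$ is chosen small only after fixing $K$; more carefully, since $\mu\in\Bel_0^{\alpha'}(\D)$ for every $\alpha'<\alpha+\varepsilon$, one picks $\alpha'$ with $\alpha<\alpha'/K^2$, which is possible whenever $\alpha'>\alpha K^2$, and $\alpha+\varepsilon>\alpha K^2$ holds for $\varepsilon$ small relative to nothing — rather one needs $\alpha+\varepsilon > \alpha K^2$, which fails for large $K$; so instead one observes that $r_\nu^{-1}(\mu)\in\Bel_0^{>\alpha}(\D)$ directly from the Mori estimate of Lemma~\ref{mori2} applied with exponent as close to $1$ as desired, then applies Claim~1-type bounds). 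In any case $\Phi\circ r_\nu^{-1}(\mu)\in B_0^{>\alpha}(\D^*)+\psi$, and since $\mu\in\Bel_0(\D)$ gives $r_\nu^{-1}(\mu)\in\Bel_0(\D)$, we also have $\Phi\circ r_\nu^{-1}(\mu)\in\beta(T_0)$ by Proposition~\ref{gs}.

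For the reverse inclusion $\supset$, take $\varphi\in B_0^{>\alpha}(\D^*)$ with $\varphi+\psi\in\beta(T_0)$. Since $B_0^{>\alpha}(\D^*)\subset B_0(\D^*)$ and $\psi\in B_0(\D^*)$, Proposition~\ref{basic} gives $\widehat f$ conformal on $\Omega^*$, asymptotically conformal on $\Omega$, with $S_{\widehat f\circ f_\nu|_{\D^*}}=\varphi+\psi$. Apply Proposition~\ref{summary} to get the decomposition $\widehat f=\widehat f_0\circ\widehat f_1$ with $\varphi_1$, $\widehat\mu_1$, $\mu_0$, $\mu_1$ as there. As in the proof of Theorem~\ref{compatible3}: property~(3) together with Lemma~\ref{base3} (applied to $\mu_0\ast\mu_1\ast\nu$ and $\mu_1\ast\nu$, using that $\mu_0$ has compact support so $\mu_0\ast\mu_1\ast\nu$ and $\mu_1\ast\nu$ differ by a compactly supported deformation after pulling back) shows $\varphi-\varphi_1\in B_0^{>\alpha}(\D^*)$, hence $\varphi_1\in B_0^{>\alpha}(\D^*)$. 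Property~(1) then forces $\widehat\mu_1\circ f_\nu\in\Bel_0^{>\alpha}(\D)$ via the pointwise bound $|\widehat\mu_1\circ f_\nu(z)|\le\delta^{-1}\rho_{\D^*}^{-2}(z^*)|\varphi_1(z^*)|$ and the change of variables $z\mapsto z^*$. Passing from $\widehat\mu_1\circ f_\nu$ to $\mu_1$ (the dilatation of $f_1:\D\to\D$) uses $|\widehat\mu_1\circ f_\nu|=|\mu_1\circ f^\nu|$ and the boundary Mori estimate of Lemma~\ref{mori2} for $f^\nu$, with $\varepsilon$ chosen small enough that the H\"older exponent is not crushed below the threshold; since $\mu_1\circ f^\nu\in\Bel_0^{\alpha+\varepsilon'}(\D)$ for all $\varepsilon'>0$ up to the relevant bound, the distortion loss $(1-\varepsilon)^{-1}$ can be made negligible, giving $\mu_1\in\Bel_0^{>\alpha}(\D)$. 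Since $\mu_0$ has compact support, $\mu_f=\mu_0\ast\mu_1\in\Bel_0^{>\alpha}(\D)$, and $\varphi+\psi=\Phi(r_\nu^{-1}(\mu_f))\in\Phi\circ r_\nu^{-1}(\Bel_0^{>\alpha}(\D))$. The second assertion follows by the same translation between $r_\nu$, $R_\tau$ and $\Phi$, $\beta$ as in the earlier theorems.

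The main obstacle is the bookkeeping of H\"older exponents under the two maps that degrade them: the right translation $r_\nu^{-1}$ (which, via Lemma~\ref{base3}, divides the exponent by $K^2$) and the passage $\mu_1\circ f^\nu\mapsto\mu_1$ through the boundary distortion of $f^\nu$ (which via Lemma~\ref{mori2} costs a factor $(1-\varepsilon)^{\pm1}$). The point that makes it work, and which must be stated carefully, is that $\nu\in\Bel_0(\D)$ so $K$ is not a genuine constant but can be localized: using the decomposition of $\nu$ (or of the relevant Beltrami coefficient) into a compactly supported part and a part of arbitrarily small sup-norm — exactly the trick already used inside the proof of Lemma~\ref{base3} — the effective dilatation near $\S1$ is as close to $1$ as we wish, so no fixed exponent threshold is lost and membership in the \emph{union} $\Bel_0^{>\alpha}(\D)=\bigcup_{\varepsilon>0}\Bel_0^{\alpha+\varepsilon}(\D)$ is preserved. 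This is why the statement is phrased for $T_0^{>\alpha}$ and not for a fixed $T_0^\alpha$, as Lemma~\ref{base3} already warned.
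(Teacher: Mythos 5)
Your proposal follows the same route as the paper's proof: the two-inclusion scheme through Propositions~\ref{basic} and \ref{summary}, with Lemma~\ref{mori2} supplying the boundary distortion of $f^\nu$ in the passage $\mu_1\circ f^\nu\mapsto\mu_1$, and the hypothesis $\nu\in\Bel_0(\D)$ used exactly for the reason you identify in your closing paragraph, namely that the exponent loss can be made arbitrarily small so membership in the union $\Bel_0^{>\alpha}(\D)$ survives; your $\supset$ direction matches the paper's step for step. The one place that needs repair is the inclusion $\subset$. Your first attempt via Lemma~\ref{base3} indeed fails (the $K^2$ loss can push the exponent below $\alpha$), as you notice, but the stated fix --- that ``$r_\nu^{-1}(\mu)\in\Bel_0^{>\alpha}(\D)$ directly from the Mori estimate'' --- is not literally true: $r_\nu^{-1}(\mu)=\mu\ast\nu$ inherits near $\S1$ the rate-free decay of $\nu$, so it lies only in $\Bel_0(\D)$, not in $\Bel_0^{>\alpha}(\D)$, and even if it did, $\psi=\Phi(\nu)$ need not lie in $B_0^{>\alpha}(\D^*)$, so this would not yield $\Phi(r_\nu^{-1}(\mu))-\psi\in B_0^{>\alpha}(\D^*)$. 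What is true and what you need is a \emph{difference} estimate: bound $r_\nu^{-1}(\mu)-\nu=r_\nu^{-1}(\mu)-r_\nu^{-1}(0)$ in the weighted sup norm via Proposition~\ref{base2} together with Lemma~\ref{mori2}, then pass to the Schwarzians by a difference bound for $\Phi$ in which the relevant quasiconformal constant is localized near the boundary (the compact-support-plus-small-norm decomposition you cite from the proof of Lemma~\ref{base3}). The paper packages exactly this as a citation of \cite[Theorem 3.6]{Mat7}, which gives $\Vert\Phi(r_\nu^{-1}(\mu))-\Phi(\nu)\Vert_{\infty,\alpha'-\varepsilon}\le C\Vert\mu\Vert_{\infty,\alpha'}$ for asymptotically conformal $\nu$, i.e.\ only an $\varepsilon$-loss of exponent. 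With that correction, your argument coincides with the paper's.
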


\begin{proof}
For one inclusion $\subset$, we
take an arbitrary $\mu \in \Bel_0^{>\alpha}(\D)$. Then, there is some $\alpha' \in (0,1)$ such that $\alpha<\alpha'$ and
$\mu \in \Bel_0^{\alpha'}(\D)$. We choose $\varepsilon>0$ such that $\alpha+\varepsilon<\alpha'$.
We apply \cite[Theorem 3.6]{Mat7} based on Propositions \ref{base1} and \ref{base2} 
to obtain that
$$
\Vert \Phi(r_\nu^{-1}(\mu))-\Phi(\nu) \Vert_{\infty,\alpha'-\varepsilon} 
=\Vert \Phi(r_\nu^{-1}(\mu))-\Phi(r_\nu^{-1}(0)) \Vert_{\infty,\alpha'-\varepsilon}
\leq C \Vert \mu \Vert_{\infty,\alpha'}<\infty
$$
for some constant $C>0$.
This implies that
$\Phi (r_\nu^{-1}(\mu))-\Phi(\nu) \in B_0^{>\alpha}(\D^*)$, and hence 
$\Phi \circ r_\nu^{-1}(\Bel_0^{>\alpha}(\D))$ is contained in $B_0^{>\alpha}(\D^*)+\psi$.

For the other inclusion $\supset$,
we take $\varphi \in B_0^{>\alpha}(\D^*)$ such that $\varphi+\psi \in \beta(T)$. 
By Proposition \ref{basic},
there is
a quasiconformal homeomorphism $\widehat f:\Chat \to \Chat$ conformal on $\Omega^*$
and asymptotically conformal on $\Omega$ such that $S_{\widehat f \circ f_\nu|_{\D^*}}=\varphi+\psi$.
We consider the decomposition $\widehat f=\widehat f_0 \circ \widehat f_1$ as in Proposition \ref{summary}.

By property (2), $\mu_0$ in particular belongs to $\Bel_0^{>\alpha}(\D)$, and then by property (3) and 
the result proved above, we have that $\varphi-\varphi_1 \in B_0^{>\alpha}(\D^*)$.
Hence, $\varphi_1 \in B_0^{>\alpha}(\D^*)$ follows from $\varphi \in B_0^{>\alpha}(\D^*)$.
Property (1) yields that
$$
|\widehat \mu_1 \circ f_\nu(z)|
\leq \frac{1}{\delta}\, \rho_{\D^*}^{-2}(z^*)|\varphi_1(z^*)|.
$$
This implies that $\widehat \mu_1 \circ f_\nu \in \Bel_0^{>\alpha}(\D)$.

Let $\mu_1$ be the complex dilatation of $f_1:\D \to \D$.
Since $|\widehat \mu_1 \circ f_\nu|=|\mu_1 \circ f^\nu|$,
we have that $\mu_1 \circ f^\nu \in \Bel_0^{\alpha'}(\D)$ for some $\alpha'>\alpha$. 
Having $\nu \in \Bel_0(\D)$, we apply Lemma \ref{mori2} 
to $\zeta=f^{\nu}(z)$. 
We choose $\varepsilon'>0$ such that $(1+\varepsilon')^{-1}\alpha'>\alpha$. Then, we have
$$
\frac{1}{A} (1-|z|)^{1+\varepsilon'} \leq 1-|f^{\nu}(z)| =1-|\zeta| 
$$
for some constant $A \geq 1$. This shows that $\mu_1 \in \Bel_0^{>\alpha}(\D)$. 

By property (2), we see that the complex dilatation $\mu_f=\mu_0 \ast \mu_1$ of $f=f_0 \circ f_1$
belongs to $\Bel_0^{>\alpha}(\D)$.
Since the complex dilatation on $\D$ of the quasiconformal homeomorphism $\widehat f \circ f_\nu$  
is $r_\nu^{-1}(\mu_f)$, we have that 
$$
\varphi +\psi=\Phi(r_\nu^{-1}(\mu_f)) \in \Phi \circ r_\nu^{-1}(\Bel_0^{>\alpha}(\D)),
$$
which shows the inclusion $\supset$.
\end{proof}

Next, we consider the affine foliated structure 
of the Teich\-m\"ul\-ler space $T^{>0}_0$ of circle diffeomorphisms of H\"older continuous derivative
induced by $T_0^\alpha$.

\begin{theorem}\label{compatible2}
For each $\nu \in \Bel_0^{>0}(\D)$, let $\psi=\Phi(\nu) \in B^{>0}_0(\D^*)$. Then,
$$
\Phi \circ r_\nu^{-1}(\Bel_0^{\alpha}(\D)) = \beta(T^{>0}_0) \cap \{B_0^{\alpha}(\D^*)+\psi\}.
$$
Hence, $\beta \circ R_{\tau}^{-1}(T_0^{\alpha}) = \beta(T^{>0}_0) \cap \{B_0^{\alpha}(\D^*)+\beta(\tau)\}$ for every
$\tau \in T^{>0}_0$.
\end{theorem}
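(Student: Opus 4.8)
The plan is to follow exactly the template established in the proofs of Theorems \ref{A^p_B}, \ref{compatible3} and \ref{compatible}, replacing the ``vanishing at the boundary'' conditions with the fixed H\"older exponent condition defining $\Bel_0^\alpha(\D)$ and $B_0^\alpha(\D^*)$, and replacing the ambient universal Teich\-m\"ul\-ler space by $T_0^{>0}$ throughout. The key technical inputs are: Lemma \ref{base3}, which controls how $\Phi\circ r_\nu^{-1}$ distorts the $\Vert\cdot\Vert_{\infty,\alpha}$-norm (with a loss of exponent by the factor $K^2$); the Mori-type boundary estimate; and Proposition \ref{summary}, which furnishes the decomposition $\widehat f=\widehat f_0\circ\widehat f_1$ with properties (1)--(3).

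For the inclusion $\subset$, I would take $\mu\in\Bel_0^\alpha(\D)$ and $\nu\in\Bel_0^{>0}(\D)$, say $\nu\in\Bel_0^{\beta}(\D)$ for some $\beta>0$. Applying Lemma \ref{base3} to $\mu_1=\mu$, $\mu_2=0$ with the right translation $r_{\nu^{-1}}=r_\nu^{-1}$ gives
$$
\Vert\Phi(r_\nu^{-1}(\mu))-\Phi(\nu)\Vert_{\infty,\,\alpha/K^2}\le C\Vert\mu\Vert_{\infty,\alpha}<\infty,
$$
so $\Phi(r_\nu^{-1}(\mu))-\psi\in B_0^{\alpha/K^2}(\D^*)\subset B_0^{>0}(\D^*)$; since moreover $\Phi(r_\nu^{-1}(\mu))\in\beta(T^{>0}_0)$ because $r_\nu^{-1}$ preserves $\Bel_0^{>0}(\D)$ (Theorem \ref{compatible3}), this gives one containment. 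The price is that the ``exact'' exponent $\alpha$ is generally not preserved — only membership in $B_0^{>0}(\D^*)$ survives — which is precisely why the theorem is stated with $\beta(T^{>0}_0)$ on the right and not with a sharper foliated statement. For the reverse inclusion $\supset$, I take $\varphi\in B_0^\alpha(\D^*)$ with $\varphi+\psi\in\beta(T^{>0}_0)$, invoke Proposition \ref{basic} to get $\widehat f$, decompose it via Proposition \ref{summary}, and argue as before: property (2) gives $\mu_0$ of compact support hence in $\Bel_0^\alpha(\D)$; property (3) together with the $\subset$ direction just established gives $\varphi-\varphi_1\in B_0^{\alpha/K^2}(\D^*)\subset B_0^{>0}(\D^*)$; hence $\varphi_1\in B_0^{>0}(\D^*)$, and one needs to check additionally that $\varphi_1\in B_0^\alpha(\D^*)$ — here the exponent $\alpha$ of $\varphi$ should transfer to $\varphi_1$ because $\varphi-\varphi_1$ has a strictly better exponent than the relevant threshold, so the weaker term does not spoil the $\alpha$-decay of $\varphi_1$. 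Property (1) then transfers the $\alpha$-bound to $\widehat\mu_1\circ f_\nu$, giving $\widehat\mu_1\circ f_\nu\in\Bel_0^\alpha(\D)$.

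The next step is to descend to $\mu_1$, the complex dilatation of $f_1:\D\to\D$. From $|\widehat\mu_1\circ f_\nu|=|\mu_1\circ f^\nu|$ one gets $\mu_1\circ f^\nu\in\Bel_0^\alpha(\D)$, and then one pulls the decay back through $f^\nu$. Since $\nu\in\Bel_0^{>0}(\D)$, hence $\nu\in\Bel_0(\D)$, Lemma \ref{mori2} applies: for any $\varepsilon>0$ there is $A\ge 1$ with $\frac{1}{A}(1-|z|)^{1+\varepsilon}\le 1-|f^\nu(z)|$, and choosing $\varepsilon$ so that $(1+\varepsilon)^{-1}\alpha$ is still positive — which is automatic — shows $\mu_1\in\Bel_0^{>0}(\D)$. (As in Theorem \ref{compatible}, we only recover membership in $\Bel_0^{>0}(\D)$, not the exact exponent $\alpha$, because Lemma \ref{mori2} loses an arbitrarily small but positive amount; this is consistent with the stated conclusion involving $T_0^{>0}$.) Combining with property (2), the complex dilatation $\mu_f=\mu_0\ast\mu_1$ of $f=f_0\circ f_1$ lies in $\Bel_0^{>0}(\D)$, whence — with a little more care, see below — in the stratum needed to land in $\Phi\circ r_\nu^{-1}(\Bel_0^\alpha(\D))$.

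The main obstacle I anticipate is bookkeeping the exponents precisely enough to land on $\Bel_0^\alpha(\D)$ rather than merely $\Bel_0^{>0}(\D)$ on the left-hand side, which the statement demands on the nose. In Theorems \ref{A^p_B} and \ref{compatible3} the target spaces are either themselves ``unions over exponents'' ($\Bel_0^{>0}$) or defined by $L^p$-integrability which is stable, so no loss matters; here $\Bel_0^\alpha(\D)$ and $B_0^\alpha(\D^*)$ have a fixed rigid exponent. The resolution — which I would carry out carefully — is that the decay of $\mu_1$ with \emph{exact} exponent $\alpha$ need not be inherited from the analogous property of $\widehat\mu_1$ under $f^\nu$ directly, because Mori loses a bit; instead one should observe that the \emph{final} object $\mu_f$ need only satisfy $\Phi(r_\nu^{-1}(\mu_f))=\varphi+\psi$ with $\varphi\in B_0^\alpha(\D^*)$, and then re-derive the membership $\mu_f\in\Bel_0^\alpha(\D)$ from the $\alpha$-decay of $\varphi$ via a one-sided sharp version of the earlier estimates (the Ahlfors--Weill/Earle--Nag reflection bound in Lemma \ref{reflection} is exponent-sharp, and the compact-support correction $\mu_0$ does not affect decay). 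So the real content is verifying that the \emph{loss-free} half of the argument — going from $\varphi\in B_0^\alpha(\D^*)$ to $\mu_f\in\Bel_0^\alpha(\D)$ — can be isolated and run on the exact exponent, while the lossy Mori/distortion steps are only used to confirm the coarser, already-known membership in $T_0^{>0}$, i.e. to guarantee that we stay inside $\beta(T_0^{>0})$. Once this separation is made explicit, the proof closes exactly as the preceding ones, yielding $\varphi+\psi=\Phi(r_\nu^{-1}(\mu_f))\in\Phi\circ r_\nu^{-1}(\Bel_0^\alpha(\D))$.
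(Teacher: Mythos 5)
Your overall template (the two inclusions, Propositions \ref{basic} and \ref{summary}, the transfer from $\widehat\mu_1$ to $\mu_1$ via a Mori-type estimate) is the same as the paper's, but there is a genuine gap: you have misread what the right-hand side demands. The affine subspace in the statement is $B_0^{\alpha}(\D^*)+\psi$ with the \emph{exact} exponent $\alpha$; the factor $\beta(T_0^{>0})$ only replaces $\beta(T)$ as the ambient set being foliated, it does not relax the exponent. So for $\subset$ you must show $\Phi(r_\nu^{-1}(\mu))-\psi\in B_0^{\alpha}(\D^*)$, and Lemma \ref{base3} is the wrong tool: since $\alpha/K^2<\alpha$ and a larger exponent is a \emph{stronger} decay condition, $B_0^{\alpha/K^2}(\D^*)$ strictly contains $B_0^{\alpha}(\D^*)$ and your estimate lands in the wrong (larger) space. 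Your remark that the theorem "is stated with $\beta(T_0^{>0})$ on the right... precisely because the exact exponent is not preserved" is therefore not a feature of the statement but an unproved claim. The paper's point is that the hypothesis $\nu\in\Bel_0^{>0}(\D)$ (absent in Theorem \ref{compatible3}, where $\nu$ is arbitrary) upgrades both lossy estimates to exponent-preserving ones: it invokes \cite[Theorem 3.6]{Mat7} to get $\Vert\Phi(r_\nu^{-1}(\mu))-\Phi(\nu)\Vert_{\infty,\alpha}\le C\Vert\mu\Vert_{\infty,\alpha}$ with no loss, which settles $\subset$ and, via property (3), gives $\varphi-\varphi_1\in B_0^{\alpha}(\D^*)$ exactly, hence $\varphi_1\in B_0^{\alpha}(\D^*)$. (Your intermediate claim that $\varphi-\varphi_1\in B_0^{\alpha/K^2}$ "does not spoil the $\alpha$-decay of $\varphi_1$" is backwards: subtracting a term with worse decay only yields the worse decay.)

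The same issue recurs in the step from $\mu_1\circ f^\nu$ to $\mu_1$. Lemma \ref{mori2} loses an arbitrary $\varepsilon>0$ and only yields $\mu_1\in\Bel_0^{>0}(\D)$; the paper instead uses the stronger Mori theorem at the boundary, \cite[Theorem 6.4]{Mat2}, valid because $\nu$ lies in some $\Bel_0^{\beta}(\D)$, which gives $\tfrac{1}{A}(1-|z|)\le 1-|f^\nu(z)|$ with exponent exactly $1$ and hence $\mu_1\in\Bel_0^{\alpha}(\D)$ on the nose. Your proposed repair --- recovering $\mu_f\in\Bel_0^{\alpha}(\D)$ a posteriori from the $\alpha$-decay of $\varphi$ alone --- cannot close the gap as described: $\Phi$ is far from injective on Beltrami coefficients, so knowing $\Phi(r_\nu^{-1}(\mu_f))-\psi\in B_0^{\alpha}(\D^*)$ does not force any particular representative to lie in $\Bel_0^{\alpha}(\D)$; one must exhibit one, and the passage from the reflection bound on $\widehat\mu_1$ (which lives on $\Omega$) to $\mu_1$ (on $\D$) unavoidably goes through the boundary distortion of $f^\nu$, which is exactly where the sharp Mori estimate is needed.
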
 

Theorem \ref{compatible2} in particular 
implies that the Teich\-m\"ul\-ler space $T^{>0}_0$ of circle diffeomorphisms of H\"older continuous derivatives
of any exponent has the complex Banach manifold structure modeled on $B_0^\alpha(\D^*)$ for every $\alpha \in (0,1)$ and each
connected component of $T^{>0}_0$ is biholomorphically equivalent to $T_0^\alpha$.
We note that this is not close in the topology of $T^{>0}_0$.
Moreover, the Bers embedding of $T^{>0}_0$ realizes the connected components of $T^{>0}_0$ as its affine foliation by $B_0^\alpha(\D^*)$.

\begin{corollary}\label{th4}
The Bers embedding $\beta:T_0^{>0} \to B_0^{>0}(\D^*)$ provides the complex Banach manifold structure for $T_0^{>0}$
modeled on $B_0^{\alpha}(\D^*)$ together with its affine foliation. 
\end{corollary}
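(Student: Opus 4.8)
The assertion is a repackaging of Theorem \ref{compatible2}: once that compatibility is available, the manifold charts and the foliation can be read off the coset decomposition of $T_0^{>0}$ by $T_0^\alpha$. First I would record the algebraic preliminaries. For $\alpha\in(0,1)$ the sets $\Diff_+^{1+\alpha}(\S1)$ and $\Diff_+^{>0}(\S1)$ are groups, and the former is a subgroup of the latter: the chain rule together with the two-sided bound on the derivative of a circle diffeomorphism shows that composition and inversion preserve H\"older continuity of the derivative, composition only lowering the exponent to the minimum of the two factors. Passing to the quotient by $\Mob(\S1)$, the space $T_0^\alpha$ is a subgroup of $(T_0^{>0},\ast)$, so for $\tau\in T_0^{>0}$ the sets $U_\tau:=R_\tau^{-1}(T_0^\alpha)=T_0^\alpha\ast\tau$ are the right cosets of $T_0^\alpha$; by the minimum-exponent rule they lie in $T_0^{>0}$, they cover it because $R_\tau(\tau)=[\id]\in T_0^\alpha$, and they are pairwise disjoint or equal.

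For the complex structure I would take as charts $\Psi_\tau:=(\beta\circ R_\tau)|_{U_\tau}\colon U_\tau\to\beta(T_0^\alpha)$. By \cite{Mat2} the Bers embedding identifies $T_0^\alpha$ with the domain $\beta(T_0^\alpha)=\beta(T)\cap B_0^\alpha(\D^*)$ of the Banach space $B_0^\alpha(\D^*)$, while the base point change $R_\tau$ carries $U_\tau$ bijectively onto $T_0^\alpha$; hence each $\Psi_\tau$ is a bijection onto the open connected set $\beta(T_0^\alpha)\subset B_0^\alpha(\D^*)$. An overlap $U_\tau\cap U_{\tau'}$ is nonempty only when $U_\tau=U_{\tau'}$, i.e. only when $\eta:=\tau'\ast\tau^{-1}\in T_0^\alpha$, and then the transition is $\Psi_{\tau'}\circ\Psi_\tau^{-1}=\beta\circ R_\eta\circ\beta^{-1}$ on $\beta(T_0^\alpha)$, which is the base point change of $T_0^\alpha$ by an element of $T_0^\alpha$ and hence a biholomorphic automorphism of $\beta(T_0^\alpha)$ (cf. \cite{Mat2}). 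Thus $\{(U_\tau,\Psi_\tau)\}$ is a holomorphic atlas on $T_0^{>0}$ modeled on $B_0^\alpha(\D^*)$ whose chart domains are open, connected, and pairwise disjoint; they are therefore exactly the connected components of $T_0^{>0}$, each biholomorphic to $T_0^\alpha$.

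To display the affine foliation I would instead pass to Bers coordinates without the base point change: Theorem \ref{compatible2} gives $\beta(U_\tau)=\beta\circ R_\tau^{-1}(T_0^\alpha)=\beta(T_0^{>0})\cap\{B_0^\alpha(\D^*)+\beta(\tau)\}$, so under the global injective Bers embedding $\beta\colon T_0^{>0}\to B_0^{>0}(\D^*)$ the component $U_\tau$ is carried onto the slice of $\beta(T_0^{>0})$ cut out by the affine subspace $B_0^\alpha(\D^*)+\beta(\tau)$. Equivalently, $\beta(T_0^{>0})=\bigsqcup_{[\psi]\in B_0^\alpha(\D^*)\backslash B_0^{>0}(\D^*)}\beta(T_0^{>0})\cap(B_0^\alpha(\D^*)+\psi)$, which exhibits $\beta(T_0^{>0})$ as an affine foliation of $B_0^{>0}(\D^*)$ by the translates of $B_0^\alpha(\D^*)$, with the leaves the Bers images of the connected components of $T_0^{>0}$.

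The only step that is not purely formal once Theorem \ref{compatible2} is granted is the claim that each $\Psi_\tau$ is a homeomorphism onto an \emph{open} subset of $B_0^\alpha(\D^*)$ for its own norm topology $\Vert\cdot\Vert_{\infty,\alpha}$, not merely for the subspace topology inherited from $B(\D^*)$; this is precisely the input taken from \cite{Mat2}, namely that $\beta$ endows $T_0^\alpha$ with a $B_0^\alpha(\D^*)$-modeled complex structure, transported along $R_\tau$. It is also the reason the manifold topology on $T_0^{>0}$ produced by this atlas is strictly finer than the intrinsic topology of $T_0^{>0}$, so that the components $U_\tau$, open in the former, need not be closed in the latter.
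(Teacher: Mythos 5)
Your proposal is correct and follows essentially the same route as the paper, which derives the corollary directly from Theorem \ref{compatible2} together with the $B_0^\alpha(\D^*)$-modeled complex structure on $T_0^\alpha$ from \cite{Mat2}: the cosets $R_\tau^{-1}(T_0^\alpha)$ become the chart domains/leaves, and the Bers embedding realizes them as the slices $\beta(T_0^{>0})\cap\{B_0^\alpha(\D^*)+\beta(\tau)\}$. Your write-up merely spells out the atlas, transition maps, and the remark that the components are open but not closed in the intrinsic topology, all of which the paper leaves implicit.
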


\noindent
{\it Proof of Theorem \ref{compatible2}.}
For one inclusion $\subset$, we
take an arbitrary $\mu \in \Bel_0^{\alpha}(\D)$. 
By \cite[Theorem 3.6]{Mat7},
we obtain that
$$
\Vert \Phi(r_\nu^{-1}(\mu))-\Phi(\nu) \Vert_{\infty,\alpha} 
=\Vert \Phi(r_\nu^{-1}(\mu))-\Phi(r_\nu^{-1}(0)) \Vert_{\infty,\alpha} 
\leq C \Vert \mu \Vert_{\infty,\alpha}<\infty.
$$
for some constant $C>0$.
This implies that
$\Phi (r_\nu^{-1}(\mu)) -\Phi(\nu) \in B_0^{\alpha}(\D^*)$, and hence 
$\Phi \circ r_\nu^{-1}(\Bel_0^{\alpha}(\D))$ is contained in $B_0^{\alpha}(\D^*)+\psi$.

For the other inclusion $\supset$,
we take $\varphi \in B_0^{\alpha}(\D^*)$ such that $\varphi+\psi \in \beta(T)$. 
For a quasiconformal homeomorphism $\widehat f:\Chat \to \Chat$ conformal on $\Omega^*$
and asymptotically conformal on $\Omega$ such that $S_{\widehat f \circ f_\nu|_{\D^*}}=\varphi+\psi$ (Proposition \ref{basic}),
we consider the decomposition $\widehat f=\widehat f_0 \circ \widehat f_1$
as in Proposition \ref{summary}. 
By property (2), $\mu_0$ belongs to $\Bel_0^{\alpha}(\D)$, and then by property (3) with 
the above result, we have that $\varphi-\varphi_1 \in B_0^{\alpha}(\D^*)$.
Hence, $\varphi_1 \in B_0^{\alpha}(\D^*)$ follows from $\varphi \in B_0^{\alpha}(\D^*)$.
Property (1) yields that
$$
|\widehat \mu_1 \circ f_\nu(z)|
\leq \frac{1}{\delta}\, \rho_{\D^*}^{-2}(z^*)|\varphi_1(z^*)|,
$$
which implies that $\widehat \mu_1 \circ f_\nu \in \Bel_0^{\alpha}(\D)$.

We consider the complex dilatation $\mu_1$ of $f_1:\D \to \D$.
Since $|\widehat \mu_1 \circ f_\nu|=|\mu_1 \circ f^\nu|$,
we have that $\mu_1 \circ f^\nu \in \Bel_0^{\alpha}(\D)$. 
We apply the stronger version 
of the Mori theorem at the boundary (\cite[Theorem 6.4]{Mat2}) 
to $\zeta=f^{\nu}(z)$. Then
$$
\frac{1}{A} (1-|z|) \leq 1-|f^{\nu}(z)| =1-|\zeta| 
$$
for some constant $A \geq 1$. This shows that $\mu_1 \in \Bel_0^{\alpha}(\D)$. 
By property (3), the complex dilatation $\mu_f=\mu_0 \ast \mu_1$ of $f=f_0 \circ f_1$
belongs to $\Bel_0^{\alpha}(\D)$.
Since the complex dilatation of $\widehat f \circ f_\nu$ on $\D$ 
is $r_\nu^{-1}(\mu_f)$, we have that 
$$
\varphi +\psi=\Phi(r_\nu^{-1}(\mu_f)) \in \Phi \circ r_\nu^{-1}(\Bel_0^{\alpha}(\D)).
$$
This proves the assertion.
\qed
\medskip

Now we obtain a stratification of the affine foliated structures of the universal Teich\-m\"ul\-ler space $T$ as follows:
the first level is $T$ by $T_0$; the second level is $T_0$ by $T^{>0}_0$; and the third level 
is $T^{>0}_0$ by $T_0^\alpha$.

\section{Applications to representation spaces}\label{7}

The universal asymptotic Teich\-m\"ul\-ler space is given by $AT=\Sym \backslash \QS$
in Gardiner and Sullivan \cite{GS}.
This admits a complex structure modeled on the quotient Banach space $B_0(\D) \backslash B(\D)$.
See also Earle, Gardiner and Lakic \cite{EGL} for the asymptotic Teich\-m\"ul\-ler space $AT(\D/\Gamma)$ of a Riemann surface.
Similarly, for the group $\Diff_+^{>1}(\S1)$ of all circle diffeomorphisms of H\"older continuous derivatives, we can consider
the quotient space
$DT=\Diff_+^{>1}(\S1) \backslash \QS$. However, this is no more a Hausdorff space in the quotient topology of $\QS$.

We impose group compatibility on these spaces.
Let $\Gamma \subset \Mob(\S1) \cong {\rm PSL}(2,\mathbb R)$ be a non-elementary Fuchsian group. 
The deformation space of $\Gamma$ in $\Mob(\S1)$ is given as
the Teich\-m\"ul\-ler space of $\Gamma$, which is defined by
$$
T(\Gamma)=\Mob(\S1) \backslash \{f \in \QS \mid f\Gamma f^{-1} \subset \Mob(\S1)\} \subset T.
$$
This is a closed subspace of $T$, and can be identified with the Teich\-m\"ul\-ler space $T(\D/\Gamma)$ of
the Riemann surface $\D/\Gamma$.
In a similar manner, the deformation space of $\Gamma$ in $\Sym$ is given in \cite{Mat6} as 
the asymptotic Teich\-m\"ul\-ler space of $\Gamma$,
which is defined by
$$
AT(\Gamma)=\Sym \backslash \{f \in \QS \mid f\Gamma f^{-1} \subset { \Sym} \} \subset AT.
$$
We remark that this is different from the asymptotic Teich\-m\"ul\-ler space $AT(\D/\Gamma)$ of the Riemann surface $\D/\Gamma$
studied in \cite{EGL}.
We also define here the deformation space of $\Gamma$ in $\Diff_+^{>1}(\S1)$:
$$
DT(\Gamma)=\Diff_+^{>1}(\S1) \backslash 
\{f \in \QS \mid f\Gamma f^{-1} \subset \Diff_+^{>1}(\S1)\} \subset DT.
$$
Clearly, $AT(\Gamma)$ is closed in $AT$ and $DT(\Gamma)$ is closed in $DT$.

We consider the canonical projections
\begin{align*}
& \alpha:T=\Mob(\S1) \backslash \QS \longrightarrow AT=\Sym \backslash \QS;\\
& \theta:DT=\Diff_+^{>1}(\S1) \backslash \QS \longrightarrow AT=\Sym \backslash \QS.
\end{align*}
We note that $\alpha|_{T(\Gamma)}$ is not surjective.
More precisely, 
if $T(\Gamma) \neq \{[\id]\}$, then 
$\alpha T(\Gamma)$ is strictly contained in $AT(\Gamma)$ (see \cite[Theorem 1.1]{Mat6}).

On the other hand, the following rigidity theorems are proved in \cite[Theorems 2.2 and 4.1]{Mat7}.

\begin{theorem}\label{rigidity}
Let $\Gamma$ be a subgroup of $\Mob(\S1)$ containing a hyperbolic element.
\begin{enumerate}
\item
If $f \in \Sym$ satisfies $f \Gamma f^{-1} \subset \Mob(\S1)$, then $f \in \Mob(\S1)$.
\item
If $f \in \Sym$ satisfies $f \Gamma f^{-1} \subset \Diff_+^{1+\alpha}(\S1)$, then $f \in \Diff_+^{1+\alpha}(\S1)$.
\end{enumerate}
\end{theorem}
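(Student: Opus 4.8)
The plan is to treat the two statements separately: part~(1) follows from an equivariance property of the barycentric extension together with the dynamics of a hyperbolic element, while part~(2) reduces to a local rigidity statement near the fixed points of such an element.

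\emph{Part (1).} Fix a hyperbolic $\gamma_0\in\Gamma$, and for $\gamma\in\Gamma$ put $\gamma_\ast=f\circ\gamma\circ f^{-1}\in\Mob(\S1)$, so that $f=\gamma_\ast^{-1}\circ f\circ\gamma$ with both $\gamma_\ast^{-1}$ and $\gamma$ in $\Mob(\S1)$. Applying the conformal naturality of the barycentric extension to this identity, the map $w=e_{\rm DE}(f)\in\QC(\D)$ satisfies $w\circ\widehat\gamma=\widehat{\gamma_\ast}\circ w$ on $\D$, where $\widehat\gamma,\widehat{\gamma_\ast}\in\Mob(\D)$ are the Möbius extensions. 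Hence the complex dilatation $\mu_w$ obeys $|\mu_w(\widehat\gamma(z))|=|\mu_w(z)|$ for all $\gamma\in\Gamma$ and $z\in\D$. Since $f\in\Sym$, Proposition~\ref{gs} gives $\mu_w\in\Bel_0(\D)$, i.e. $\mu_w(\zeta)\to 0$ as $|\zeta|\to 1$. As $\widehat{\gamma_0}$ is hyperbolic, $\widehat{\gamma_0}^{\,n}(z)$ tends to a boundary fixed point as $n\to\infty$, so $|\mu_w(z)|=\lim_{n\to\infty}|\mu_w(\widehat{\gamma_0}^{\,n}(z))|=0$ for every $z\in\D$. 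Therefore $w$ is conformal, $w\in\Mob(\D)$, and $f=w|_{\S1}\in\Mob(\S1)$. Conformal naturality was used precisely because both $\gamma$ and $\gamma_\ast$ lie in $\Mob(\S1)$; this is exactly what fails in part~(2).

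\emph{Part (2).} Here $\gamma_\ast:=f\circ\gamma_0\circ f^{-1}\in\Diff_+^{1+\alpha}(\S1)$ is merely a diffeomorphism, so I would argue locally near the fixed points. Since $\gamma_\ast$ is topologically conjugate via $f$ to the hyperbolic Möbius $\gamma_0$, it has an attracting fixed point $a_\ast=f(a)$ and a repelling one $b_\ast=f(b)$, where $a,b$ are the fixed points of $\gamma_0$. By the one–dimensional Koenigs--Sternberg linearization for $C^{1+\alpha}$ contractions/expansions (the limit $h_+(x)=\lim_n(\lambda_+')^{-n}\gamma_\ast^{\,n}(x)$ with $\lambda_+'=(\gamma_\ast)'(a_\ast)$, whose derivative is the convergent product $\prod_{k\ge0}(\gamma_\ast)'(\gamma_\ast^{\,k}x)/\lambda_+'$, and likewise at $b_\ast$), one obtains $C^{1+\alpha}$ charts $h_\pm$ near $a_\ast,b_\ast$ conjugating $\gamma_\ast$ to $x\mapsto\lambda_\pm' x$, while the Möbius linearizing maps $\ell_\pm$ conjugate $\gamma_0$ to $x\mapsto\lambda^{\pm1}x$. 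Then the germ $g:=h_+\circ f\circ\ell_+^{-1}$ at $0$ fixes $0$ and satisfies $g(\lambda x)=\lambda_+'\,g(x)$; and since a $C^{1+\alpha}$ diffeomorphism is symmetric (Proposition~\ref{main}) and symmetry is preserved under composition, $g$ is symmetric near $0$. The local rigidity lemma I would establish is: a homeomorphism germ $g$ at $0$ with $g(0)=0$ that is asymptotically conformal at $0$ and satisfies $g(\lambda x)=\Lambda\,g(x)$ for some $\lambda\in(0,1)$, $\Lambda>0$, must be linear with $\Lambda=\lambda$. The functional equation extends $g$ to $(0,\infty)$ with $g(\lambda^n x)=\Lambda^n g(x)$; scaling a fixed pair of adjacent reference intervals by $\lambda^n$ produces intervals collapsing to $0$ on which the quasisymmetric ratio of $g$ equals the $n$-independent constant $\bigl(g(c_0+\ell_0)-g(c_0)\bigr)/\bigl(g(c_0)-g(c_0-\ell_0)\bigr)$; asymptotic conformality at $0$ forces this to be $1$, i.e. Jensen's equation $g(c_0+\ell_0)+g(c_0-\ell_0)=2g(c_0)$ for all $c_0>\ell_0>0$, so $g$ is affine on $(0,\infty)$, hence linear since $g(0)=0$, and matching across $0$ gives $g(x)=cx$ and $\Lambda=\lambda$. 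Applying this to $g$ and to its analogue at $b$ shows $f$ is $C^{1+\alpha}$ on neighborhoods $U_a$ of $a$ and $U_b$ of $b$, with $\lambda_\pm'=\lambda^{\pm1}$.

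\emph{Globalization and obstacle.} Finally, from $f=\gamma_\ast^{-n}\circ f\circ\gamma_0^{\,n}$ and the hyperbolic dynamics of $\gamma_0$, which sweeps $U_a$ over $\S1\setminus\{b\}$ so that $\S1$ is a finite union of $U_b$ with sets $\gamma_0^{-n}(U_a)$, the regularity propagates: on $\gamma_0^{-n}(U_a)$ the map $f$ equals $\gamma_\ast^{-n}\circ(f|_{U_a})\circ\gamma_0^{\,n}$, a composition of $C^{1+\alpha}$ maps, and on $U_b$ it is $C^{1+\alpha}$ by the local step; hence $f\in\Diff_+^{1+\alpha}(\S1)$. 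I expect the main obstacle to be the regularity bookkeeping in part~(2): proving the $C^{1+\alpha}$ (not merely $C^1$) linearization of $\gamma_\ast$ with no loss of Hölder exponent, checking that composition with such a chart preserves asymptotic conformality at a point, and—should any exponent be lost in the linearization—sharpening it back to $\alpha$ by iterating the cocycle identity $(\gamma_\ast)'(f(x))\,f'(x)=f'(\gamma_0(x))\,\gamma_0'(x)$ toward the fixed points.
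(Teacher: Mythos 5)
Your part (1) is correct and is the standard argument: conformal naturality of the barycentric extension turns $f=\gamma_*^{-1}\circ f\circ\gamma$ into $w\circ\widehat\gamma=\widehat{\gamma_*}\circ w$ for $w=e_{\rm DE}(f)$, hence $|\mu_w\circ\widehat\gamma|=|\mu_w|$ a.e., and since $\mu_w=s([f])\in\Bel_0(\D)$ by Proposition \ref{gs}, pushing along the orbit of the hyperbolic $\widehat{\gamma_0}$ to the boundary forces $\mu_w=0$. The only tidying needed is that the invariance holds almost everywhere and $\Bel_0(\D)$ is an essential-limit condition, so the conclusion should be run through the sets $\{|\mu_w|>\epsilon\}$ (each is invariant up to null sets and essentially contained in a compact subset of $\D$, hence null) rather than along a single pointwise orbit. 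Note for context that the paper does not prove this theorem at all — it quotes it from \cite[Theorems 2.2 and 4.1]{Mat7} — so your part (2) is being compared against the closely related machinery in the appendix (Theorems \ref{Sternberg} and \ref{promotion}), which only upgrades a conjugacy already known to be a $C^1$ diffeomorphism; your argument attempts the harder jump from $\Sym$ directly, which is the right ambition.

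The genuine gap is at the first step of part (2): you set $\lambda_+'=(\gamma_*)'(a_*)$ and invoke the Koenigs--Sternberg linearization, but that requires $\lambda_+'\neq 1$, and this does \emph{not} follow from $a_*=f(a)$ being a topologically attracting fixed point. The multiplier is a $C^1$-conjugacy invariant, not a topological one; a $C^{1+\alpha}$ circle diffeomorphism can have a topologically attracting fixed point with derivative exactly $1$, and if $\lambda_+'=1$ there is no Koenigs chart and all of part (2) collapses. The claim is true, but it needs the quasisymmetry of $f$: with $r_n=|\gamma_*^{n}(y)-a_*|$, the H\"older bounds for $f$ and $f^{-1}$ give $r_n\leq C\lambda^{n/K}$ for some $K\geq 1$ (exponential decay inherited from $\gamma_0$), while the mean value theorem gives $r_{n+1}/r_n=(\gamma_*)'(\xi_n)\to(\gamma_*)'(a_*)$; if this limit were $1$ then $r_n\geq c(1-\epsilon)^n$ for every $\epsilon>0$, contradicting the exponential decay. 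You must insert this verification at both fixed points before linearizing. Two further small repairs: the appeal to Proposition \ref{main} to conclude that $g$ is symmetric near $0$ is off target, since $h_+$ is a local interval diffeomorphism rather than a circle diffeomorphism — what you actually need, and what is true by the mean value theorem, is that post-composition with a local $C^1$ diffeomorphism does not change the limit of the quasisymmetric ratio over shrinking adjacent intervals; and your worry about losing the H\"older exponent in the linearization is unfounded, since $\log h_+'=\sum_{k\geq 0}\log\bigl((\gamma_*)'(\gamma_*^{k}x)/\lambda_+'\bigr)$ is $\alpha$-H\"older term by term with geometrically summable constants, matching the conclusion of Theorem \ref{Sternberg}. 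With these points supplied, the local rigidity lemma (constancy of the quasisymmetric ratio under the scaling relation, the Jensen equation, and linearity of $g$) and the globalization via $f=\gamma_*^{-n}\circ f\circ\gamma_0^{n}$ are sound.
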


Theorem \ref{rigidity} (1) implies 
that the restriction
$\alpha|_{T(\Gamma)}:T(\Gamma) \to AT(\Gamma)$ is injective for a non-elementary Fuchsian group $\Gamma$.
As an application of 
Theorem \ref{rigidity} (2) and Theorem \ref {compatible3},
we will prove below that
$\theta|_{DT(\Gamma)}:DT(\Gamma) \to AT(\Gamma)$ is also injective. 
Then, under the identification of the embedded images in $AT(\Gamma)$ by $\alpha$ and $\theta$, we have
$$
T(\Gamma) \subset DT(\Gamma) \subset AT(\Gamma).
$$
We may ask a problem of which (or both) inclusion is strict.

\begin{theorem}\label{injective}
For a 
Fuchsian group $\Gamma \subset \Mob(\S1)$ with a hyperbolic element, the restriction of the projection 
$$
\theta|_{DT(\Gamma)}:DT(\Gamma) \to AT(\Gamma)
$$
is injective. Hence,
the deformation space $DT(\Gamma)$ can be realized in 
$AT(\Gamma)$. 
\end{theorem}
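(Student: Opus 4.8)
The plan is to lift the two given points to quasisymmetric maps, transport the problem through the Bers embedding into a statement about the difference $\varphi_1-\varphi_2$ of two holomorphic quadratic differentials, and then run the telescoping argument along a cyclic hyperbolic subgroup of $\Gamma$ that is the substance of the rigidity theorem \ref{rigidity}(2).

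First I would unwind the hypothesis. Let $\delta_1,\delta_2\in DT(\Gamma)$ with $\theta(\delta_1)=\theta(\delta_2)$, and choose representatives $f_1,f_2\in\QS$ with $f_i\Gamma f_i^{-1}\subset\Diff_+^{>1}(\S1)$. The equality $\theta(\delta_1)=\theta(\delta_2)$ means precisely that $h:=f_1\circ f_2^{-1}\in\Sym$, and it suffices to show $h\in\Diff_+^{>1}(\S1)$, since this gives $\delta_1=\delta_2$. Put $\varphi_i=\beta([f_i])\in\beta(T)$. Using Theorem \ref{B_0-B} with $\tau=[f_2]$, the condition $h\in\Sym$ is equivalent to $\varphi_1-\varphi_2\in B_0(\D^*)$; using Theorem \ref{compatible3} likewise, the goal $h\in\Diff_+^{>1}(\S1)$ (that is, $[h]\in T_0^{>0}$) is equivalent to $\varphi_1-\varphi_2\in B_0^{>0}(\D^*)$. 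Now fix a hyperbolic element $\gamma_0\in\Gamma$, regarded as a M\"obius transformation of $\D^*$. From $f_i\gamma_0 f_i^{-1}\in\Diff_+^{>1}(\S1)$, i.e. $[f_i\gamma_0 f_i^{-1}]\in T_0^{>0}$, together with $[f_i\gamma_0 f_i^{-1}]=R_{[f_i]}(\gamma_0^*[f_i])$ and $\beta(\gamma_0^*[f_i])=\gamma_0^*\varphi_i$ (the $\Mob(\S1)$-equivariance of the Bers embedding), Theorem \ref{compatible3} yields $\gamma_0^*\varphi_i-\varphi_i\in B_0^{>0}(\D^*)$ for $i=1,2$. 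Hence, with $\phi:=\varphi_1-\varphi_2\in B_0(\D^*)$, we obtain $\gamma_0^*\phi-\phi=(\gamma_0^*\varphi_1-\varphi_1)-(\gamma_0^*\varphi_2-\varphi_2)\in B_0^{>0}(\D^*)$, so in fact $\gamma_0^*\phi-\phi\in B_0^{\alpha}(\D^*)$ for some $\alpha\in(0,1)$.

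It remains to prove that $\phi\in B_0(\D^*)$ and $\gamma_0^*\phi-\phi\in B_0^{\alpha}(\D^*)$, for a hyperbolic $\gamma_0$, force $\phi\in B_0^{\alpha}(\D^*)\subset B_0^{>0}(\D^*)$; this is the content of (the proof of) the rigidity theorem \ref{rigidity}(2), obtained by telescoping along $\langle\gamma_0\rangle$. Concretely, writing $w(z):=\rho_{\D^*}^{-2}(z)|\phi(z)|$, the hypothesis together with the fact that $\gamma_0$ is a hyperbolic isometry of $\D^*$ gives $|w(\gamma_0 z)-w(z)|\le C\,\rho_{\D^*}^{-\alpha}(z)$; summing this over the backward orbit $\{\gamma_0^{-n}z_0\}_{n\ge 1}$, whose boundary distance $\rho_{\D^*}^{-1}(\gamma_0^{-n}z_0)$ shrinks geometrically and which tends to the repelling fixed point of $\gamma_0$, where $w$ vanishes because $\phi\in B_0(\D^*)$, one concludes $w(z_0)\le C'\rho_{\D^*}^{-\alpha}(z_0)$ for all $z_0\in\D^*$, i.e. $\phi\in B_0^{\alpha}(\D^*)$. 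Feeding $\varphi_1-\varphi_2=\phi\in B_0^{>0}(\D^*)$ back into Theorem \ref{compatible3} gives $h\in\Diff_+^{>1}(\S1)$, hence $\delta_1=\delta_2$ and the injectivity of $\theta|_{DT(\Gamma)}$. The hard part is this last, dynamical step, and it is the place where the rigidity theorem \ref{rigidity}(2) is genuinely used rather than just the affine foliated structure: the telescoping works only because $\gamma_0\in\Gamma\subset\Mob(\S1)$, so that $\gamma_0^*$ acts \emph{linearly} and isometrically on $B(\D^*)$ and the cocycle identity makes sense; in particular one may not first normalize $f_2$ to the identity, which would replace $\Gamma$ by $f_2\Gamma f_2^{-1}\subset\Diff_+^{>1}(\S1)$ and destroy the linearity of the relevant pull-back.
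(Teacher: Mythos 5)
Your reduction is correct and, up to the last step, is the same as the paper's: you translate membership in $DT(\Gamma)$ and the equality $\theta(\delta_1)=\theta(\delta_2)$ through Theorems \ref{B_0-B} and \ref{compatible3} into the statements $\phi:=\varphi_1-\varphi_2\in B_0(\D^*)$ and $\gamma^*\varphi_i-\varphi_i\in B_0^{>0}(\D^*)$, and you correctly isolate the linearity of the $\Mob(\S1)$-action on $B(\D^*)$ as the reason the cocycle identity for $\phi$ makes sense. Where you diverge is in how the final implication ``$\phi\in B_0(\D^*)$ and $\gamma_0^*\phi-\phi\in B_0^{>0}(\D^*)$ imply $\phi\in B_0^{>0}(\D^*)$'' is obtained. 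The paper does \emph{not} reprove this analytically: it rescales, setting $\varphi=t(\varphi_1-\varphi_2)$ for small $t>0$ so that $\varphi\in\beta(T)\cap B_0(\D^*)=\beta(T_0)$, writes $\varphi=\beta([f])$ for an honest $f\in\Sym$, observes that by linearity $f\Gamma f^{-1}\subset\Diff_+^{>1}(\S1)$ while $f\notin\Diff_+^{>1}(\S1)$, and then invokes Theorem \ref{rigidity}(2) applied to the cyclic group $\langle\gamma_0\rangle$ as a black box to get a contradiction. Your route works directly with $\phi$, which need not lie in $\beta(T)$, and therefore forces you to reprove the analytic core of Theorem \ref{rigidity}(2) at the level of quadratic forms. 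Both are legitimate; the paper's rescaling trick buys the ability to quote the rigidity theorem verbatim, while yours makes the underlying estimate explicit. If you keep your route, you should at least note the alternative of rescaling into $\beta(T_0)$, which shortens the argument considerably.

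There is one genuine imprecision in the step you chose to spell out. The claim that the boundary distance $\rho_{\D^*}^{-1}(\gamma_0^{-n}z_0)$ of the backward orbit ``shrinks geometrically'' is false for $z_0$ near the attracting fixed point of $\gamma_0$ (or, more generally, on the attracting side of the axis): there the backward orbit first moves \emph{away} from $\S1$ before converging to the repelling fixed point, and summing $\rho_{\D^*}^{-\alpha}(\gamma_0^{-n}z_0)$ over $n\ge 1$ then yields only a bound $w(z_0)\le C''$ with $C''$ independent of $z_0$, not $w(z_0)\le C'\rho_{\D^*}^{-\alpha}(z_0)$ with uniform $C'$. To get the pointwise decay with a uniform constant you must telescope toward whichever fixed point $z_0$ is drifting to under the appropriate half-orbit (forward iterates on the attracting side, backward on the repelling side), using $w(\gamma_0^{\pm N}z_0)\to 0$ in either case since $\phi\in B_0(\D^*)$. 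This is exactly the care taken in the proof of Theorem \ref{rigidity}(2) in the cited reference, so the gap is fixable, but as written your one-directional telescoping does not establish $\phi\in B_0^{\alpha}(\D^*)$.
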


\begin{proof}
Suppose that there are $f_1,f_2 \in \QS$ such that (1) both $f_1\Gamma f_1^{-1}$ and $f_2\Gamma f_2^{-1}$
are subgroups of $\Diff_+^{>1}(\S1)$; (2) $f_2 \circ f_1^{-1} \in \Sym$ but 
$f_2 \circ f_1^{-1} \notin \Diff_+^{>1}(\S1)$. For $[f_1],[f_2] \in T$, we consider the Bers embeddings
$\varphi_1=\beta([f_1])$ and $\varphi_2=\beta([f_2])$ in $B(\D^*)$.
By Theorem \ref{compatible3}, condition (1) can be read as the orbit of $\varphi_1$ under $\Gamma$ is in 
$B_0^{>0}(\D^*)+\varphi_1$ and the orbit of $\varphi_2$ under $\Gamma$ is in 
$B_0^{>0}(\D^*)+\varphi_2$. Condition (2) can be read as $\varphi_1-\varphi_2$ is in $B_0(\D^*)$ but not in $B_0^{>0}(\D^*)$
(we also use Theorem \ref{B_0-B}).

We set $\varphi=t(\varphi_1-\varphi_2)$ for a sufficiently small constant $t>0$ with $\varphi \in \beta(T)$.
Since $\Gamma$ acts on $B(\D^*)$ linearly, the above conditions imply that the orbit of $\varphi$ under $\Gamma$ is
in $B_0^{>0}(\D^*)+\varphi$ and that $\varphi$ is in $B_0(\D^*)-B_0^{>0}(\D^*)$.
If we choose $f \in \Sym$ such that $\beta([f])=\varphi$, then these conditions are equivalent to that
$f \Gamma f^{-1} \subset \Diff_+^{>1}(\S1)$ and $f \notin \Diff_+^{>1}(\S1)$.
Here, by choosing any hyperbolic element $\gamma \in \Gamma$, we can find some $\alpha \in (0,1)$ such that
$f \langle \gamma \rangle f^{-1} \subset \Diff_+^{1+\alpha}(\S1)$. However, this contradicts Theorem \ref{rigidity} (2).
Therefore, there are no such $f_1,f_2$ satisfying the conditions mentioned at the beginning,
which shows that $\theta|_{DT(\Gamma)}$ is injective.
\end{proof}

We have handled so far the class $\Diff_+^{1+\alpha}(\S1)$ for $\alpha \in (0,1)$, but in the following result,
we can also consider higher regularity of circle diffeomorphisms at the same time.
We denote the group of such circle diffeomorphisms by $\Diff_+^{r}(\S1)$ for $r>1$.
The following corollary to the above theorem asserts that if we restrict 
a group of circle homeomorphisms to the one obtained by quasisymmetric conjugation of a M\"obius group,
we can extend the rigidity theorem from a diffeomorphic conjugation to a symmetric conjugation.
Here, we refer to a quasisymmetric conjugation of a M\"obius group
as a {\it uniformly quasisymmetric group}. Justification of using this terminology stems from the result of Markovic \cite{Mar}.

\begin{corollary}\label{extension2}
Let $\Gamma_1$ and $\Gamma_2$ be non-abelian uniformly quasisymmetric subgroups of
$\Diff^{r}_+(\S1)$ $(r >1)$.
If $f\Gamma_1 f^{-1}=\Gamma_2$ for $f \in \Sym$, then $f \in \Diff^{r}_+(\S1)$.
\end{corollary}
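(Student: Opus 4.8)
The plan is to first invoke Theorem~\ref{injective} to upgrade $f$ from a symmetric homeomorphism to a $C^{1+\alpha}$-diffeomorphism, and then run a standard local‑linearization bootstrap to reach $C^{r}$. By the definition of a uniformly quasisymmetric group there are $h_1\in\QS$ and a subgroup $\Gamma\subset\Mob(\S1)$ with $\Gamma_1=h_1\Gamma h_1^{-1}$; since $\Gamma_1$ is non-abelian, so is $\Gamma$, and every non-abelian subgroup of $\Mob(\S1)$ contains a hyperbolic element, so Theorems~\ref{rigidity} and~\ref{injective} apply to $\Gamma$ (their proofs use only the presence of a hyperbolic element in $\Gamma$, not discreteness). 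Set $h_2=f\circ h_1$, so that $h_2\Gamma h_2^{-1}=f\Gamma_1 f^{-1}=\Gamma_2$. Because $\Gamma_1,\Gamma_2\subset\Diff_+^{r}(\S1)\subset\Diff_+^{>1}(\S1)\subset\Sym$, both $h_1$ and $h_2$ lie in $\{g\in\QS\mid g\Gamma g^{-1}\subset\Diff_+^{>1}(\S1)\}$ and hence determine points of $DT(\Gamma)$, and $h_2\circ h_1^{-1}=f\in\Sym$ says that these two points have the same image in $AT(\Gamma)$ under $\theta$. Injectivity of $\theta|_{DT(\Gamma)}$ (Theorem~\ref{injective}) forces the two points of $DT(\Gamma)$ to coincide, i.e.\ $f=h_2\circ h_1^{-1}\in\Diff_+^{>1}(\S1)$; thus $f\in\Diff_+^{1+\alpha}(\S1)$ for some $\alpha\in(0,1)$.

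It remains to promote $C^{1+\alpha}$ to $C^{r}$, which is the diffeomorphic‑conjugacy rigidity alluded to in the statement. Choose a hyperbolic $\gamma\in\Gamma$ and put $\gamma_1=h_1\gamma h_1^{-1}\in\Gamma_1$, $\gamma_2=f\gamma_1 f^{-1}\in\Gamma_2$; these are $C^{r}$ circle diffeomorphisms topologically conjugate to $\gamma$, hence of North--South type with attracting fixed point $p_i$ and repelling fixed point $q_i$, with $f(p_1)=p_2$ and $f(q_1)=q_2$. Differentiating $f\circ\gamma_1=\gamma_2\circ f$ at $p_1$ and $q_1$ gives $\gamma_1'(p_1)=\gamma_2'(p_2)$ and $\gamma_1'(q_1)=\gamma_2'(q_2)$, and one checks (as in \cite{Mat7}) that these fixed points are genuinely hyperbolic, say $\lambda:=\gamma_1'(p_1)\in(0,1)$. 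Sternberg's linearization theorem then furnishes $C^{r}$ local diffeomorphisms $\phi_i$ near $p_i$ with $\phi_i\circ\gamma_i\circ\phi_i^{-1}(x)=\lambda x$; the germ $\psi:=\phi_2\circ f\circ\phi_1^{-1}$ fixes $0$, is $C^{1}$, and satisfies $\psi(x)=\lambda^{-n}\psi(\lambda^{n}x)$ for all $n$, so letting $n\to\infty$ forces $\psi(x)=\psi'(0)\,x$. Hence $f=\phi_2^{-1}\circ\psi\circ\phi_1$ is $C^{r}$ on a neighbourhood $U$ of $p_1$. Since every point of $\S1\setminus\{q_1\}$ is carried into $U$ by some $\gamma_1^{n}\in\Gamma_1$, the relation $f=\gamma_2^{-n}\circ f\circ\gamma_1^{n}$ exhibits $f$ locally as a composition of $C^{r}$ maps, so $f\in C^{r}$ on $\S1\setminus\{q_1\}$; running the same argument for $\gamma_1^{-1}$ at $q_1$ yields $f\in\Diff_+^{r}(\S1)$.

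The step I expect to be the real obstacle is the regularity bootstrap, and within it the verification that the fixed points of $\gamma_1$ (equivalently of $\gamma_2$) are hyperbolic, i.e.\ $\gamma_1'(p_1)\ne1$, which is exactly what makes Sternberg's theorem applicable; this should follow from $\gamma_1$ being $C^{1+\alpha}$ and quasisymmetrically conjugate to a hyperbolic Möbius transformation, along the lines of \cite{Mat7}, and in any event, if one endpoint failed to be hyperbolic one would propagate $C^{r}$-regularity inward from the other one (or from another hyperbolic element of $\Gamma$). By contrast the first half — the reduction to $DT(\Gamma)$ and the appeal to Theorem~\ref{injective}, which is where the hypotheses genuinely enter — is immediate, as is the differentiation of the conjugacy relation and the linearization‑and‑propagation argument once hyperbolicity is in hand.
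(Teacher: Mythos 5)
Your proof is correct and follows essentially the same route as the paper: the identical reduction to two points of $DT(\Gamma)$ with the same $\theta$-image and the appeal to Theorem~\ref{injective} to get $f\in\Diff_+^{>1}(\S1)$, followed by the Sternberg-linearization bootstrap to $C^{r}$, which the paper packages as a citation of Theorem~\ref{promotion}. The one point you flag --- that $\gamma_1'(p_1)\neq 1$ must be verified because you linearize an element of $\Gamma_1$ rather than a M\"obius element --- is genuinely needed (Theorem~\ref{promotion} as stated conjugates a subgroup of $\Mob(\S1)$, where the multiplier condition is automatic, so its application here implicitly requires the same fact), and it does hold: the quasisymmetric conjugacy $h_1$ is H\"older continuous, so the orbit $\gamma_1^n(x)\to p_1$ converges geometrically, whereas a $C^{1+\alpha}$ germ with multiplier $1$ admits only polynomial convergence. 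With that observation supplied, your argument is complete and matches the paper's.
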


\begin{proof}
We may assume that $\Gamma_1=f_1 \Gamma f_1^{-1}$ and $\Gamma_2=f_2 \Gamma f_2^{-1}$
for a subgroup $\Gamma$ of $\Mob(\S1)$ with a hyperbolic element and that $f=f_2 \circ f_1^{-1} \in \Sym$.
From this, we see that
$f_1$ and $f_2$ modulo $\Diff_+^{>1}(\S1)$ belong to $DT(\Gamma)$. On the other hand, the assumption 
$f_2 \circ f_1^{-1} \in \Sym$ implies that the projections of these points in $DT(\Gamma)$ by $\theta$
are the same point of $AT(\Gamma)$. By the injectivity due to Theorem \ref{injective}, we obtain that 
$f_1$ coincides with $f_2$ modulo $\Diff_+^{>1}(\S1)$. In particular, $f=f_2 \circ f_1^{-1}$ is a diffeomorphism.
Then, by Theorem \ref{promotion} below, $f$ belongs to $\Diff^{r}_+(\S1)$.
\end{proof}

The following result is a special case of the theorem due to Ghys and Tsuboi \cite{GT}.
In fact, as
remarked by Navas \cite[p.152]{Nav}, their theorem can be generalized to $\Diff_+^{1+\alpha}(\S1)$
by showing
the Sternberg linearization theorem in the corresponding setting. 
A proof is given in the appendix for the sake of convenience.

\begin{theorem}\label{promotion}
Let $\Gamma$ be a subgroup of $\Mob(\S1)$ with a hyperbolic element. If $f \in \Diff_+(\S1)$
satisfies $f \Gamma f^{-1} \in \Diff_+^r(\S1)$ for $r>1$, then $f \in \Diff_+^r(\S1)$.
\end{theorem}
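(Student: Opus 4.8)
The plan is to reduce Theorem~\ref{promotion} to a purely local statement about fixed points of hyperbolic elements, then invoke a Sternberg-type linearization theorem in the $C^{1+\alpha}$ category. Pick a hyperbolic element $\gamma \in \Gamma$ and let $g = f\gamma f^{-1} \in \Diff_+^r(\S1)$. Since $f \in \Diff_+(\S1)$ is a homeomorphism conjugating $\gamma$ to $g$, the map $g$ has exactly two fixed points $p_\pm = f(x_\pm)$, where $x_\pm$ are the attracting/repelling fixed points of $\gamma$; moreover the multipliers are preserved, $g'(p_\pm) = \gamma'(x_\pm)$, so $g$ is hyperbolic with the same multiplier $\lambda \ne 1$ as $\gamma$. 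The strategy is to show that near each fixed point $f$ is as smooth as $g$, and then propagate this regularity around the circle using the group action.

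First I would establish the local picture at the attracting fixed point $x_+$ of $\gamma$. Since $\gamma \in \Mob(\S1)$, in a suitable coordinate it is the linear contraction $t \mapsto \lambda t$ with $|\lambda| < 1$. On the other side, $g \in \Diff_+^r(\S1)$ is a $C^r$ contraction near $p_+$ with derivative $\lambda$ there; by the $C^{1+\alpha}$ Sternberg linearization theorem (the version attributed to Navas, \cite[p.152]{Nav}, generalizing Ghys--Tsuboi \cite{GT}), there is a local $C^r$ diffeomorphism $h_+$, defined near $p_+$, conjugating $g$ to its linear part $t \mapsto \lambda t$. Then $h_+ \circ f$ is a local homeomorphism near $x_+$ conjugating the linear map $t \mapsto \lambda t$ to itself; any such conjugacy of a linear contraction to itself is forced to be linear (this is the uniqueness part of the linearization, or a direct argument: commuting with $t\mapsto\lambda t$ and fixing $0$ pins down the germ up to a scalar). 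Hence $h_+ \circ f$ is linear near $x_+$, so $f = h_+^{-1} \circ (\text{linear})$ is $C^r$ on a neighborhood of $x_+$. The same argument at the repelling fixed point $x_-$ (using $\gamma^{-1}$, which is hyperbolic attracting there) shows $f$ is $C^r$ near $x_-$.

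Next I would upgrade this to full $C^r$ regularity of $f$ on all of $\S1$. The orbit of any point $x \ne x_\pm$ under $\langle \gamma \rangle$ accumulates at $x_+$ in forward time and at $x_-$ in backward time, so $\bigcup_{n\in\Z} \gamma^n(U_+) \cup \gamma^n(U_-) = \S1$ for neighborhoods $U_\pm$ of $x_\pm$ on which $f$ is known to be $C^r$. On such an orbit segment, the conjugacy relation gives $f = g^{-n} \circ f \circ \gamma^n$ on $\gamma^{-n}(U_+)$; since $g \in \Diff_+^r(\S1)$, $\gamma \in \Diff_+^\infty(\S1)$, and $f|_{U_+} \in C^r$, each such local expression of $f$ is $C^r$, and these patch together consistently. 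Therefore $f \in \Diff_+^r(\S1)$.

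I expect the main obstacle to be the $C^{1+\alpha}$ Sternberg linearization step: the classical Sternberg theorem is stated for $C^k$ (or smooth/analytic) contractions with no small-divisor obstruction in dimension one, but one needs the version valid when $r = 1+\alpha$ with $0 < \alpha < 1$, where the linearizing conjugacy is only $C^{1+\alpha}$ and the proof requires controlling H\"older norms in the standard functional-iteration scheme. Since the paper states that ``a proof is given in the appendix for the sake of convenience,'' the honest plan is to supply exactly that appendix argument: set up the linearization operator $h \mapsto \lambda^{-1} h(g(\cdot))$ on an appropriate ball in the $C^{1+\alpha}$ space of local diffeomorphisms tangent to the identity, verify it is a contraction there using $|\lambda| < 1$ and the $C^{1+\alpha}$ regularity of $g$, and extract the fixed point; the rigidity of linear-to-linear conjugacies and the orbit-propagation argument above are then routine.
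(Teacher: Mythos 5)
Your proposal follows essentially the same route as the paper: reduce to a hyperbolic element $\gamma$, apply the $C^{1+\alpha}$ Sternberg linearization (with its uniqueness clause, which is where the hypothesis $f\in\Diff_+(\S1)$, i.e.\ differentiability at the fixed point, is genuinely needed to force the linear-to-linear conjugacy to be linear) to get local $C^r$ regularity of $f$ at the attracting fixed point, then propagate by iterating $\gamma^{-1}$ and swap the roles of the two fixed points. The paper merely packages your local step as a corollary (a $C^1$ conjugacy between two $C^r$ hyperbolic germs with equal multipliers is $C^r$), and its appendix carries out exactly the contraction-mapping scheme on H\"older norms that you outline.
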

\medskip

We note that a certain part of the arguments in \cite[Section 4]{Mat7} for the proof of Theorem \ref{rigidity} (2)
can be replaced with this theorem.

\section{Appendix: The Sternberg linearization theorem}\label{8}

In this appendix, we give a proof of Theorem \ref{promotion}. To this end,
following the instruction by Navas \cite[p.150]{Nav}, 
we will do an exercise in proving a $C^{1+\alpha}$-version of
the Sternberg linearization theorem. We note that \cite[Theorem 3.6.2]{Nav} handles the case of $C^{r}$
for any integer $r \geq 2$ including $r=\infty$, and the non-integer case can be shown similarly to the argument below.

\begin{theorem}\label{Sternberg}
Let $g$ be a real-valued orientation-preserving
$C^{1+\alpha}$-diffeomorphism defined on some neighborhood of $0 \in \R$ such that $g(0)=0$ and
$g'(0)=a \neq 1$. Then, there exists a real-valued orientation-preserving
$C^{1+\alpha}$-diffeomorphism $h$ defined on some neighborhood of $0 \in \R$ with $h(0)=0$ and $h'(0)=1$
such that $h(g(x))=a h(x)$. 
Moreover, such an $h$ is unique. More precisely, if a real-valued orientation-preserving
$C^{1}$-diffeomorphism $h_1$ defined on some neighborhood of $0 \in \R$ satisfies the same properties
as $h$, then $h_1$ coincides with $h$ in some neighborhood of $0 \in \R$.
\end{theorem}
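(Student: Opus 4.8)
The plan is to build $h$ as a renormalized limit of the iterates of $g$, as in the classical Sternberg argument, while keeping track of the $\alpha$-H\"older norm of the derivative. First I reduce to the contracting case: since $g$ is orientation preserving, $a>0$, and $a\neq 1$; replacing $g$ by $g^{-1}$ if necessary (still $C^{1+\alpha}$ near $0$, and $h\circ g=a\,h$ is equivalent to $h\circ g^{-1}=a^{-1}h$), I may assume $0<a<1$. Write $g(x)=ax+\phi(x)$, so $\phi(0)=0$, $\phi'(0)=0$, and $\phi'$ is $\alpha$-H\"older; hence $|\phi'(x)|\le C|x|^{\alpha}$ and $|\phi(x)|\le C|x|^{1+\alpha}$ near $0$. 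Since $g'$ is continuous with $g'(0)=a<1$, I fix $\delta>0$ so that $b:=\sup_{|x|\le\delta}g'(x)$ satisfies $b<1$ and $b^{1+\alpha}<a$; this is possible because $b\to a$ as $\delta\to0$ and $a^{1+\alpha}<a$. Then $g$ maps $[-\delta,\delta]$ into itself, and the mean value theorem gives $|g^{n}(x)|\le b^{n}|x|$ and $|g^{n}(x)-g^{n}(y)|\le b^{n}|x-y|$ on $[-\delta,\delta]$.

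Next I set $h_{n}(x)=a^{-n}g^{n}(x)$. From $g^{n+1}(x)=a\,g^{n}(x)+\phi(g^{n}(x))$ one gets $h_{n+1}(x)-h_{n}(x)=a^{-(n+1)}\phi(g^{n}(x))$, so $|h_{n+1}(x)-h_{n}(x)|\le \tfrac{C}{a}|x|^{1+\alpha}(b^{1+\alpha}/a)^{n}$, and since $b^{1+\alpha}/a<1$ the telescoping series converges uniformly on $[-\delta,\delta]$; thus $h:=\lim_{n}h_{n}$ is continuous with $h(0)=0$. Differentiating, $h_{n}'(x)=a^{-n}(g^{n})'(x)=\prod_{k=0}^{n-1}\bigl(1+a^{-1}\phi'(g^{k}(x))\bigr)$, and because $|\phi'(g^{k}(x))|\le C b^{k\alpha}\delta^{\alpha}$ the sum $\sum_{k}a^{-1}|\phi'(g^{k}(x))|$ converges uniformly (shrinking $\delta$ so all factors stay positive), so the infinite product converges uniformly to a positive continuous limit. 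Since $h_{n}\to h$ and $h_{n}'$ converge uniformly, $h$ is $C^{1}$ with $h'=\prod_{k\ge 0}\bigl(1+a^{-1}\phi'(g^{k}(\cdot))\bigr)$ and $h'(0)=\lim_{n}a^{-n}a^{n}=1$, so $h$ is a local diffeomorphism, and $h(g(x))=\lim_{n}a^{-n}g^{n+1}(x)=a\,h(x)$.

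For the $C^{1+\alpha}$ regularity I take logarithms of the product: using $|\log(1+u)-\log(1+v)|\le 2|u-v|$ for $|u|,|v|$ small together with $|\phi'(g^{k}(x))-\phi'(g^{k}(y))|\le C\,(b^{k}|x-y|)^{\alpha}$,
$$|\log h'(x)-\log h'(y)|\le \frac{2}{a}\sum_{k\ge 0}\bigl|\phi'(g^{k}(x))-\phi'(g^{k}(y))\bigr|\le \frac{2C}{a}\Bigl(\sum_{k\ge 0}b^{k\alpha}\Bigr)|x-y|^{\alpha},$$
and $\sum_{k}b^{k\alpha}<\infty$ since $b<1$. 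As $h'$ is bounded away from $0$ and $\infty$, exponentiating shows $h'$ is $\alpha$-H\"older, i.e.\ $h\in C^{1+\alpha}$. The place I expect to be the main obstacle is exactly this H\"older control of $h'$: it requires the two-sided contraction $|g^{k}(x)-g^{k}(y)|\le b^{k}|x-y|$ with $b<1$ \emph{together with} the summability thresholds $b^{1+\alpha}<a$ and $b^{\alpha}<1$, which is what forces the neighborhood to be chosen small enough that $b$ is near $a$; once that is arranged, estimating the variation of the telescoping product through its logarithm is the crux.

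Finally, for uniqueness, let $h_{1}$ be a $C^{1}$ orientation-preserving local diffeomorphism with $h_{1}(0)=0$, $h_{1}'(0)=1$, $h_{1}\circ g=a\,h_{1}$, and put $\psi=h\circ h_{1}^{-1}$ near $0$. Then $\psi$ is $C^{1}$ with $\psi(0)=0$, $\psi'(0)=1$, and $\psi(ay)=a\,\psi(y)$, hence $\psi(y)=a^{-n}\psi(a^{n}y)$ for all $n$. Writing $\psi(t)=t+o(t)$ as $t\to0$ (legitimate since $\psi\in C^{1}$ with $\psi'(0)=1$) gives $\psi(y)=a^{-n}\bigl(a^{n}y+o(a^{n}y)\bigr)\to y$ as $n\to\infty$, so $\psi=\id$ near $0$ and $h_{1}=h$.
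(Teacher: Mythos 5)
Your proof is correct, but the existence part takes a genuinely different (though closely related) route from the paper. The paper works in the Banach space $E_\delta$ of $C^{1+\alpha}$ functions $\psi$ on $[-\delta,\delta]$ with $\psi(0)=\psi'(0)=0$, normed by the $\alpha$-H\"older seminorm of $\psi'$, shows that the linear operator $S_\delta\psi=a^{-1}\psi\circ g$ has norm less than $1$ for small $\delta$, and obtains $h=\mathrm{id}+\psi_0$ from the Banach fixed point theorem applied to $F(\psi)=S_\delta\psi+a^{-1}\psi_*$ with $\psi_*=g-a\,\mathrm{id}$. You instead construct $h$ explicitly as $\lim_n a^{-n}g^n$ and verify by hand the uniform convergence of the telescoping series, the convergence of the derivative as an infinite product, and the $\alpha$-H\"older continuity of $h'$ via $\log h'$. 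These produce the same map: iterating $F$ from $0$ gives $F^n(0)=a^{-n}g^n-\mathrm{id}$, so your sequence is the unrolled Picard iteration of the paper's contraction. What the paper's packaging buys is that the $C^{1+\alpha}$ regularity of $h$ is automatic from working in the right Banach space, with all the H\"older bookkeeping concentrated in the single estimate $\Vert S_\delta\Vert<1$; what your version buys is transparency about where each hypothesis enters (the thresholds $b^{1+\alpha}<a$ for convergence of the values and $b^{\alpha}<1$ for the derivative and its H\"older modulus), at the cost of repeating essentially the same chain-rule estimate three times. Your key inequalities all check out: $|h_{n+1}-h_n|\lesssim |x|^{1+\alpha}(b^{1+\alpha}/a)^n$, the factor bound $|a^{-1}\phi'(g^k(x))|\le a^{-1}C\delta^\alpha b^{k\alpha}$ (which, after shrinking $\delta$, justifies both the infinite product and the two-sided Lipschitz bound for $\log(1+u)$), and the H\"older estimate summed over $k$. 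The uniqueness argument via $\psi(ay)=a\psi(y)$ and $\psi(y)=a^{-n}\psi(a^ny)\to y$ is essentially identical to the paper's.
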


\begin{proof}
By considering $g^{-1}$ if necessary, we may assume that $0<a<1$. For $\delta \in (0,1)$, we
assume that $g$ is defined on $[-\delta,\delta]$. We set
$$
c_\delta=\sup_{-\delta \leq x,\,y \leq \delta} \frac{|g'(x)-g'(y)|}{|x-y|^\alpha}.
$$
Then, $c_\delta$ is decreasing as $\delta \to 0$. By setting $y=0$, we in particular have
$$
|g'(x)-g'(0)| \leq c_\delta |x|^\alpha \leq c_\delta \delta^\alpha \quad (-\delta \leq x \leq \delta).
$$
Therefore, $|g'(x)| \leq a+c_\delta \delta^\alpha$ and $|g(x)| \leq \delta(a+c_\delta \delta^\alpha)$.

Let $E_\delta$ be a linear space of real-valued $C^{1+\alpha}$-functions $\psi$ on $[-\delta,\delta]$
such that $\psi(0)=\psi'(0)=0$. By providing a norm
$$
\Vert \psi \Vert=\sup_{-\delta \leq x,\,y \leq \delta} \frac{|\psi'(x)-\psi'(y)|}{|x-y|^\alpha},
$$
we see that $E_\delta$ is a Banach space. As before, $|\psi'(x)| \leq \Vert \psi \Vert \delta^\alpha$ for
every $x \in [-\delta,\delta]$.
We define a linear operator $S_\delta:E_\delta \to E_\delta$ by
$$
S_\delta=\frac{1}{a}\, \psi \circ g.
$$

We will show that this is well-defined and the operator norm satisfies $\Vert S_\delta \Vert<1$ if
we choose a sufficiently small $\delta>0$. Since
\begin{align*}
& |(\psi \circ g)'(x)-(\psi \circ g)'(y)|\\
=&|\psi'(g(x))g'(x)-\psi'(g(y))g'(x)+\psi'(g(y))g'(x)-\psi'(g(y))g'(y)|\\
\leq& |\psi'(g(x))-\psi'(g(y))| \cdot |g'(x)|+|\psi'(g(y))| \cdot |g'(x)-g'(y)|,
\end{align*}
we have that
\begin{align*}
& \frac{|(\psi \circ g)'(x)-(\psi \circ g)'(y)|}{|x-y|^\alpha}\\
\leq& \frac{|\psi'(g(x))-\psi'(g(y))|}{|g(x)-g(y)|^\alpha} \cdot \frac{|g(x)-g(y)|^\alpha}{|x-y|^\alpha} \cdot|g'(x)|+
|\psi'(g(y))| \cdot \frac{|g'(x)-g'(y)|}{|x-y|^\alpha}.
\end{align*}
We choose $\delta>0$ so small that both $g(x)$ and $g(y)$ are in $[-\delta,\delta]$. Then,
the last term in the above inequality is bounded by
$$
\Vert \psi \Vert |g'(\xi)|^\alpha |g'(x)|+\Vert \psi \Vert \delta^\alpha c_\delta
\leq \Vert \psi \Vert \{(a+c_\delta \delta^\alpha)^{1+\alpha}+\delta^\alpha c_\delta \},
$$
where $\xi$ is some real number between $x$ and $y$. Therefore,
$$
\Vert S_\delta \Vert \leq \frac{1}{a} \{(a+c_\delta \delta^\alpha)^{1+\alpha}+c_\delta \delta^\alpha  \},
$$
which can be made smaller than $1$ by $\delta \to 0$ and hence $c_\delta \delta^\alpha \to 0$.

We set $\psi_*(x)=g(x)-ax$, which belongs to $E_\delta$. By fixing $\psi_*$, we consider
a functional equation
$$
S_\delta(\psi)+a^{-1} \psi_*=\psi
$$
with respect to $\psi \in E_\delta$. If we set the left side term as 
$$
F(\psi)=S_\delta (\psi)+a^{-1} \psi_*,
$$
then $F:E_\delta \to E_\delta$ satisfies
$$
\Vert F(\psi_1)-F(\psi_2) \Vert=\Vert S_\delta(\psi_1-\psi_2) \Vert \leq \Vert S_\delta \Vert \Vert \psi_1-\psi_2 \Vert
$$
with $\Vert S_\delta \Vert<1$. By the Banach contraction principle,
there exists $\psi_0 \in E_\delta$ uniquely such that $F(\psi_0)=\psi_0$.

We define $h(x)=x+\psi_0(x)$. This satisfies that
\begin{align*}
h(g(x))&=g(x)+\psi_0 \circ g(x)
=\psi_*(x)+ax+\psi_0 \circ g(x)\\
&=\psi_*(x)+ax+a S_\delta(\psi_0)(x)
=a\psi_0(x)+ax=ah(x).
\end{align*}
Thus, this function $h$ is the desired one.

Next, we show the latter statement on the uniqueness.
For any $x$ in some neighborhood of $0 \in \R$, we have
$$
ah \circ h_1^{-1}(x)=h \circ g \circ h_1^{-1}(x)=h \circ h_1^{-1}(ax).
$$
Then, it follows that
$$
h \circ h_1^{-1}(x)=\frac{h \circ h_1^{-1}(a^n x)}{a^n}=x\, \frac{h \circ h_1^{-1}(a^n x)}{a^n x} 
$$
for any $n \in \N$, and this tends to $x$ as $n \to \infty$.
Hence, $h \circ h_1^{-1}(x)=x$, that is, $h(x)=h_1(x)$.
\end{proof}

We have the following consequence in the same way as \cite[Corollary 3.6.3]{Nav}.

\begin{corollary}\label{coro}
For $r>1$, let $g_1$ and $g_2$
be $C^{r}$-diffeomorphisms satisfying all properties of $g$ as in Theorem \ref{Sternberg}.
If a $C^{1}$-diffeomorphism $\varphi$ defined on some neighborhood of $0 \in \R$ with $\varphi(0)=0$ 
conjugates $g_1$ to $g_2$, then $\varphi$ is a $C^{r}$-diffeomorphism.
\end{corollary}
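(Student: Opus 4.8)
The plan is to follow the scheme of \cite[Corollary 3.6.3]{Nav}: linearize $g_1$ and $g_2$ separately by the Sternberg theorem, transport $\varphi$ to a conjugacy between the two resulting linear germs, and then exploit the rigidity of $C^1$-germs commuting with a linear contraction. First I would record that, differentiating the conjugacy relation $\varphi\circ g_1=g_2\circ\varphi$ at $0$ and cancelling $\varphi'(0)\neq 0$, one gets $g_1'(0)=g_2'(0)=:a$; since $g_1,g_2$ are orientation preserving, $a>0$, and --- replacing $g_1,g_2$ by $g_1^{-1},g_2^{-1}$ if necessary, which only changes $a$ into $1/a$ and leaves $\varphi$ a conjugacy --- we may assume $0<a<1$.

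Next I would invoke the $C^r$-version of Theorem \ref{Sternberg}: for $r\in(1,2)$ this is exactly Theorem \ref{Sternberg} with $\alpha=r-1$; for integer $r\ge 2$ (including $r=\infty$) it is \cite[Theorem 3.6.2]{Nav}; and for non-integer $r>2$ it follows from the same Banach fixed-point scheme as in the appendix, now carried out in the space of $C^r$-functions vanishing to first order at $0$. In every case this produces $C^r$-diffeomorphisms $h_1,h_2$ defined near $0$ with $h_i(0)=0$, $h_i'(0)=1$ and $h_i(g_i(x))=a\,h_i(x)$, equivalently $h_i^{-1}(ax)=g_i(h_i^{-1}(x))$.

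Then I would set $\psi:=h_2\circ\varphi\circ h_1^{-1}$, a $C^1$-diffeomorphism near $0$ with $\psi(0)=0$. Using the three relations just mentioned,
$$
\psi(ax)=h_2\bigl(\varphi(h_1^{-1}(ax))\bigr)=h_2\bigl(\varphi(g_1(h_1^{-1}(x)))\bigr)=h_2\bigl(g_2(\varphi(h_1^{-1}(x)))\bigr)=a\,\psi(x)
$$
on a neighbourhood of $0$. Now the uniqueness argument from the end of the proof of Theorem \ref{Sternberg} applies verbatim: for $x\neq 0$ and every $n\in\N$,
$$
\psi(x)=\frac{\psi(a^n x)}{a^n}=x\,\frac{\psi(a^n x)-\psi(0)}{a^n x}\longrightarrow x\,\psi'(0)\qquad(n\to\infty),
$$
so $\psi(x)=\psi'(0)\,x$ is linear, hence $C^\infty$. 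Therefore $\varphi=h_2^{-1}\circ\psi\circ h_1$ is a composition of $C^r$-maps (the $h_i$ and their inverses are $C^r$-diffeomorphisms and $\psi$ is linear), and so $\varphi\in C^r$.

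The only substantial ingredient is the regularity assertion in the $C^r$-version of the Sternberg linearization theorem, namely that the linearizing maps $h_1,h_2$ inherit the full $C^r$ regularity of $g_1,g_2$; everything else reduces to the elementary fact that a $C^1$-germ fixing $0$ and commuting with $x\mapsto ax$, $0<a<1$, must be linear. Hence the one point requiring care beyond the appendix, which treats only $C^{1+\alpha}$ with $\alpha\in(0,1)$, is the higher-regularity (and possibly non-integer) form of that theorem, which we take from \cite{Nav} together with the remark preceding Theorem \ref{Sternberg}.
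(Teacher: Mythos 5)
Your proof is correct and follows exactly the route the paper intends by its reference to \cite[Corollary 3.6.3]{Nav}: linearize $g_1$ and $g_2$ by the $C^r$ Sternberg theorem, observe that $\psi=h_2\circ\varphi\circ h_1^{-1}$ commutes with $x\mapsto ax$, and conclude linearity of $\psi$ by the same iteration used for the uniqueness clause of Theorem \ref{Sternberg}. Your preliminary observations (that the conjugacy forces $g_1'(0)=g_2'(0)$, and the reduction to $0<a<1$) and your handling of the higher-regularity versions of the linearization theorem are consistent with the paper's appendix and its remarks.
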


\medskip
\noindent
{\it Proof of Theorem \ref{promotion}.}
For a hyperbolic element $\gamma \in \Gamma$, we set $g=f\gamma f^{-1} \in \Diff^r_+(\S1)$.
For the attracting fixed point $\xi \in \S1$ of $\gamma$, $f(\xi)$ is the attracting fixed point of $g$ and 
$|g'(f(\xi))|=|\gamma'(\xi)| \neq 1$.
In local coordinates around $f(\xi) \in \S1$, $g$ is represented 
by a real-valued orientation-preserving
$C^{r}$-diffeomorphism $\widetilde g$
defined on some neighborhood of $0 \in \R$ such that $\widetilde g(0)=0$ and $\widetilde g'(0)=a \neq 1$.
We also consider the representation of $\gamma$ in local coordinates around $\xi \in \S1$, which is
also a real-valued orientation-preserving
$C^{r}$-diffeomorphism $\widetilde \gamma$
defined on some neighborhood of $0 \in \R$ such that $\widetilde \gamma(0)=0$ and $\widetilde \gamma'(0)=a \neq 1$.
Then, a $C^1$-diffeomorphism $\varphi$ between these neighborhoods, which stems from $f$, conjugates
$\widetilde \gamma$ to $\widetilde g$. By Corollary \ref{coro}, we see that $\varphi$ is a $C^{r}$-diffeomorphism.

By the above argument, we see that $f$ is in $C^{r}$ in some neighborhood of $\xi \in \S1$.
However, the iteration of $\gamma^{-1}$ expands this neighborhood to $\S1$ except the repelling fixed point of
$\gamma$, and we find that $f$ is in $C^{r}$ there. Finally, by exchanging the roles of
the attracting and the repelling fixed points, 
we conclude that $f$ is a $C^{r}$-diffeomorphism of the entire space $\S1$.
\qed
\medskip

\bigskip

\end{document}